\newtheorem{theorem}{Theorem}[section]
\newtheorem{proposition}[theorem]{Proposition}
\newtheorem{example}{Example}
\newtheorem{remark}[theorem]{Remark}
\newcommand{\R}{\mathbb{R}}
\newcommand{\N}{\mathbb{N}}
\DeclareMathOperator{\dist}{dist}
\DeclareMathOperator{\argmin}{argmin}
\title{HJB-RBF based approach for the control of PDEs\thanks{AA was supported by the CNPq research grant 3008414/2019-1 and by a research grant from PUC-Rio. HO was supported by the Coordenação de Aperfeiçoamento de Pessoal de Nível Superior - Brasil (CAPES).}}
\author{Alessandro Alla\thanks{Department of Mathematics, PUC-Rio, Rio de Janeiro, Brasil (\url{alla@mat.puc-rio.br},\url{www.alessandroalla.com})}
\and Hugo Oliveira\thanks{Department of Mathematics, PUC-Rio, Rio de Janeiro, Brasil (\url{oliveira@mat.puc-rio.br})}, 
\and
Gabriele Santin\thanks{Digital Society Center, Bruno Kessler Foundation, Trento (Italy) (\url{gsantin@fbk.eu})} }
\begin{document}

\maketitle

\begin{abstract}
Semi-lagrangian schemes for discretization of the dynamic programming principle are based on a time discretization projected on a state-space grid. 
The use of a structured grid makes this approach not feasible for high-dimensional problems due to the curse of dimensionality. 
Here, we present a new approach for infinite horizon optimal control problems where the value function is computed using Radial Basis Functions (RBF) by the Shepard’s moving least squares approximation method on scattered grids.
We propose a new method to generate a scattered mesh driven by the dynamics and the selection of the shape parameter in the RBF using an optimization routine. This mesh will help to localize the problem and approximate the dynamic programming principle in high dimension. Error estimates for the value function are also provided. Numerical tests for high dimensional problems will show the effectiveness of the proposed method.


\end{abstract}

\textbf{Keywords.}
dynamic programming, Hamilton-Jacobi-Bellman equation, optimal control for PDEs, radial basis functions

\textbf{AMS subject classifications.}
49L20, 93B52, 65D12, 65N06

\section{Introduction}
\label{Section1}
\setcounter{section}{1}
\setcounter{equation}{0}
\setcounter{theorem}{0}
\setcounter{algorithm}{0}
\renewcommand{\theequation}{\arabic{section}.\arabic{equation}}

The classical Dynamic Programming (DP) approach to optimal control problems is based on the characterization of the value function as the unique viscosity solution of a Hamilton-Jacobi-Bellman (HJB) equation. The DP scheme for the numerical approximation of viscosity solutions of Bellman equations is typically based on a time discretization which is projected on a fixed state-space grid. The time discretization can be done by a one-step scheme for the dynamics and the projection on the grid typically uses a local interpolation. Clearly, the use of a grid is a limitation with respect to possible applications in high-dimensional problems due to the curse of dimensionality. We refer to \cite{BCD97,FF13} for theoretical results and numerical methods. 

In \cite{JS15}, the HJB equation has been approximated by means of Shepard's moving least square approximation providing error estimates and convergence results. Numerical tests for a selected kernel and structured meshes have been provided for the control of low dimensional problems. A Kriging's interpolation was introduced in \cite{CC20} with Halton meshes up to dimension $10$.

Our work is based on \cite{JS15} with the aim to extend the method to high dimensional problems, e.g. control of PDEs which discretized dimension is, at least, of order $O(10^3)$. We have investigated this approach on scattered meshes since the use of structured meshes is impossible for such high dimensional problems, and thus we resort to a grid driven by the dynamics of the problem, similarly to what is done in \cite{AFS19} where a tree structure has been proposed to approximate the finite horizon optimal control problem and the time dependent HJB equation. The generation of the grid driven by the dynamics will help us to compute feedback controls for a class of initial conditions which is an important novelty in the field. 

Within the RBF approximant, the determination of the kernel's shape parameter is known to be a crucial choice. 
Several techniques exists in the literature to optimize this parameter (see e.g. Chapter 14 in \cite{F15} for a recent survey), but they are usually based on the minimization of the approximation error. 
We introduce instead a novel technique based on the residual of the HJB equation.
Specifically, even if the reconstruction of the Value function requires the computation of several Shepard approximant, a unique parameter is globally optimized based on the residual of the Value iteration scheme. 

The use of the residual in HJB has been introduced in \cite{G97} and also used in the context of RBF and HJB to obtain an adaptive grid \cite{FFJS17}. Here, the residual is used as quantity of interest to select the shape parameter.
We summarize the contributions of our paper and the novelty of our approach:
\begin{enumerate}
    \item generation of an unstructured mesh driven by the dynamics that helps us to localize the problem.
    \item An optimized way to select the shape parameter minimizing the residual of the Value Iteration method.
    \item The possibility to compute feedback control of PDEs for a class of initial conditions.
   \item An error estimate of the reconstruction process that is valid along trajectories of the dynamics.
\end{enumerate}

Let us briefly comment on related literature on the control of PDEs using a dynamic programming approach. In the last two decades there has been a tremendous effort to mitigate the curse of dimensionality with the goal to control PDEs. It is straightforward to see that the discretization of a PDE leads to a large system of ODEs which is hard to control using a DP approach. 

The first approach is related to the combination of model order reduction techniques and dynamic programming. The idea is to use a projection technique, e.g. Proper Orthogonal Decomposition (\cite{BGW15}) to reduce the dimension of the dynamical systems. Then, we can approximate with standard Value Iteration algorithm the correspondent reduced Hamilton-Jacobi-Bellman equation. This turns out to be efficient if the reduction is up to dimension $4$ or $5$. We refer to the pioneer work \cite{KVX04} and to \cite{AFV17} for error estimates of the method. 
A different way to reduce the dimension of the dynamical system is given by pseudo-spectral methods as shown in \cite{KK18}. 
Recently, the solution of Hamilton-Jacobi-Bellman on a tree structure has been proposed in \cite{AFS19} and its coupling with  model order reduction in \cite{AS20}.

 Other methods concerned e.g. the use of sparse grids for time dependent Ha\-mil\-ton-Jacobi equation \cite{BGGK13} and for the control of wave equation with a DP approach \cite{GK17}, tensor train decomposition \cite{DKK19}, neural networks \cite{DO16a,DO16b} and max-plus algebra \cite{ME07,ME09}. All these approaches are important contributions on the mitigation of the curse of dimensionality.

 
 The outline of this paper is as follows. In Section \ref{Section2} we recall the background of the dynamic programming approach. Section \ref{Section3} concerns the radial basis functions interpolation and Shepard's approximation. In Section \ref{Section4}, we propose the novel method for the coupling between RBF and DP. We will discuss and comment the details of our approach. Numerical simulations for the control of two different PDEs are shown in Section \ref{Section5}. Finally, conclusions and future works are driven in Section \ref{Section6}.

In more general terms, the research on RBF approached to the solution of PDEs is very active, and a complete account of this field is beyond the scope of this paper. As an introduction, we refer to \cite{Chen2014,Fornberg2015}.

    

\section{Dynamic Programming Equations}
\label{Section2}
\setcounter{section}{2}
\setcounter{equation}{0}
\setcounter{theorem}{0}
\setcounter{algorithm}{0}
\renewcommand{\theequation}{\arabic{section}.\arabic{equation}}

This section summarizes the main results for infinite horizon control problem by means of dynamic programming equations. For a complete description we refer e.g. to the manuscripts \cite{BCD97,FF13}.

Let the dynamical system described by
\begin{equation}  
	\begin{cases}
		\dot{y} (t)=f(y(t),u(t)), \text{ } t \in (0,\infty],\\ \label{dyn}
		y(0)=x \text{}\in \mathbb{R}^d,
	\end{cases}
\end{equation}
where $y: (0,\infty] \rightarrow \mathbb{R}^d$ is the state variable, and the control is such that $u\in \mathcal{U} \coloneqq \{ u : (0,\infty] \rightarrow U, \text{ measurable}\}$ with $\mathcal{U}$ the set of admissible controls and $U\subset\R^m$ is a compact set. The function $f:\mathbb{R}^d \times \mathbb{R}^m \rightarrow \mathbb{R}^d $ is  Lipschitz continuous with respect to the first variable with constant $L_f>0$. 
Under such hypothesis, the existence of a unique solution to system \eqref{dyn} holds true (see \cite{FF13}). 

To select the unknown control $u$, we define the following cost functional $\mathcal{J}: \mathcal{U} \rightarrow \mathbb{R}$ 
\begin{equation}
 \label{costfunctional}
		\mathcal{J}_x(y, u) \coloneqq  \int_0^\infty g(y(s),u(s)) e^{-\lambda s}ds.
\end{equation}
The function $g: \mathbb{R}^d \times \mathbb{R}^m \rightarrow \mathbb{R}$ is the running cost and it is assumed to be bounded and Lipschitz continuous in the first variable. The constant $\lambda > 0$ is a discount factor and the term $e^{-\lambda s}$ guarantees the convergence of the integral for $g$ bounded.

Our optimal control problem reads:
\begin{equation}
	\min_{u \in \mathcal{U}} \mathcal{J}_x(y, u),
\end{equation}
with $y$ being a trajectory that solves \eqref{dyn} corresponding to the initial point $x$ and control $u$. We will use the index $x$ in our notations, to stress the dependence on the initial condition. 
We aim at obtaining the control in a feedback form and, for this reason, we define the Value function as follows:
\begin{equation}
	\label{vf}
	v(x) \coloneqq  \inf_{u \in \mathcal{U}} \mathcal{J}_x(y, u).
\end{equation}

One can characterize the value function in terms of the Dynamical Programming Principle (DPP):
\begin{equation}
	\label{DPP}
	v(x) = \inf_{u \in \mathcal{U}} \bigg\{\int_0^\tau g(y(s),u(s)) e^{-\lambda s}ds + e^{-\lambda \tau}\ v(y_{x}(\tau))\bigg\} \quad  \forall x \in \mathbb{R}^d,\tau>0.
\end{equation}
From \eqref{DPP}, we derive the Hamilton-Jacobi-Bellman (HJB) equations corresponding to the infinite horizon problem:
\begin{equation}
	\label{HJB}
	\lambda v(x) + \sup_{u \in U} \{ -f(x,u) \cdot \nabla v(x) - g(x,u)\}=0, \qquad x\in\mathbb{R}^d,
\end{equation}
where $\nabla v$ is the gradient of $v$. The HJB is a further characterization of the value function by means of a degenerate PDEs whose solution has to be thought in the viscous sense \cite{BCD97}.
Thus, if one is able to solve \eqref{HJB}, it is possible to obtain the optimal feedback control $u^{*}(t)$:
\begin{equation}
	\label{FDB}
	u^{*}(t) = \operatorname*{ arg\,max}_{u \in U} \{ -f(x,u) \cdot \nabla v(x) - g(x,u)\},\qquad x\in\mathbb{R}^d.
\end{equation}


\section{RBF and Shepard's method}
\label{Section3}
\setcounter{section}{3}
\setcounter{equation}{0}
\setcounter{theorem}{0}
\setcounter{algorithm}{0}
\renewcommand{\theequation}{\arabic{section}.\arabic{equation}}
This section presents a brief explanation of Radial Basis Functions (RBFs) and the Shepard's approximation method. Most of the following material is based on 
the 
book \cite{F07} for the general RBF theory, and on the paper \cite{JS15} for the Shepard method, to which we refer for 
further details.

An RBF $\varphi: \mathbb R^d \rightarrow \mathbb R$ is a radially invariant function $\varphi (x) = \varphi (\|x\|_2)$, which is usually required to be positive definite. Several instances of these $\varphi$ exists (see e.g.~\cite{F07}). In this paper we will consider only radially compactly supported RBFs. A significant example are the Wendland RBFs, which are a family of strictly positive definite RBFs of compact support and of different smoothness (depending on a parameter) (see \cite{W95}). The radial nature of these bases can be used to tune their spread by means of a shape parameter $\sigma>0$, i.e., usually one works with a basis $\varphi^\sigma(z)\coloneqq  \varphi(\sigma\|z\|_2)$. Figure \ref{fig-wend} shows the Wendland RBF
\begin{equation}\label{eq:wendland_rbf}
\varphi^{\sigma}(r) = \max \{0, (1-\sigma r)^6 (35 \sigma^2 r^2  + 18 \sigma r + 3)\}, \;\; r\coloneqq \|x\|,
\end{equation}
with $\sigma = 0.8$ on the left panel and $\sigma = 2$ on the right panel, and we can see how the parameters influence the {\em shape} of the basis functions and makes them flat (left) or spiky (right). The RBF is scaled so that $\varphi^\sigma(0)=1$

\begin{figure}[htbp]
\label{fig-wend}
	\centering 
	\includegraphics[scale = 0.4]{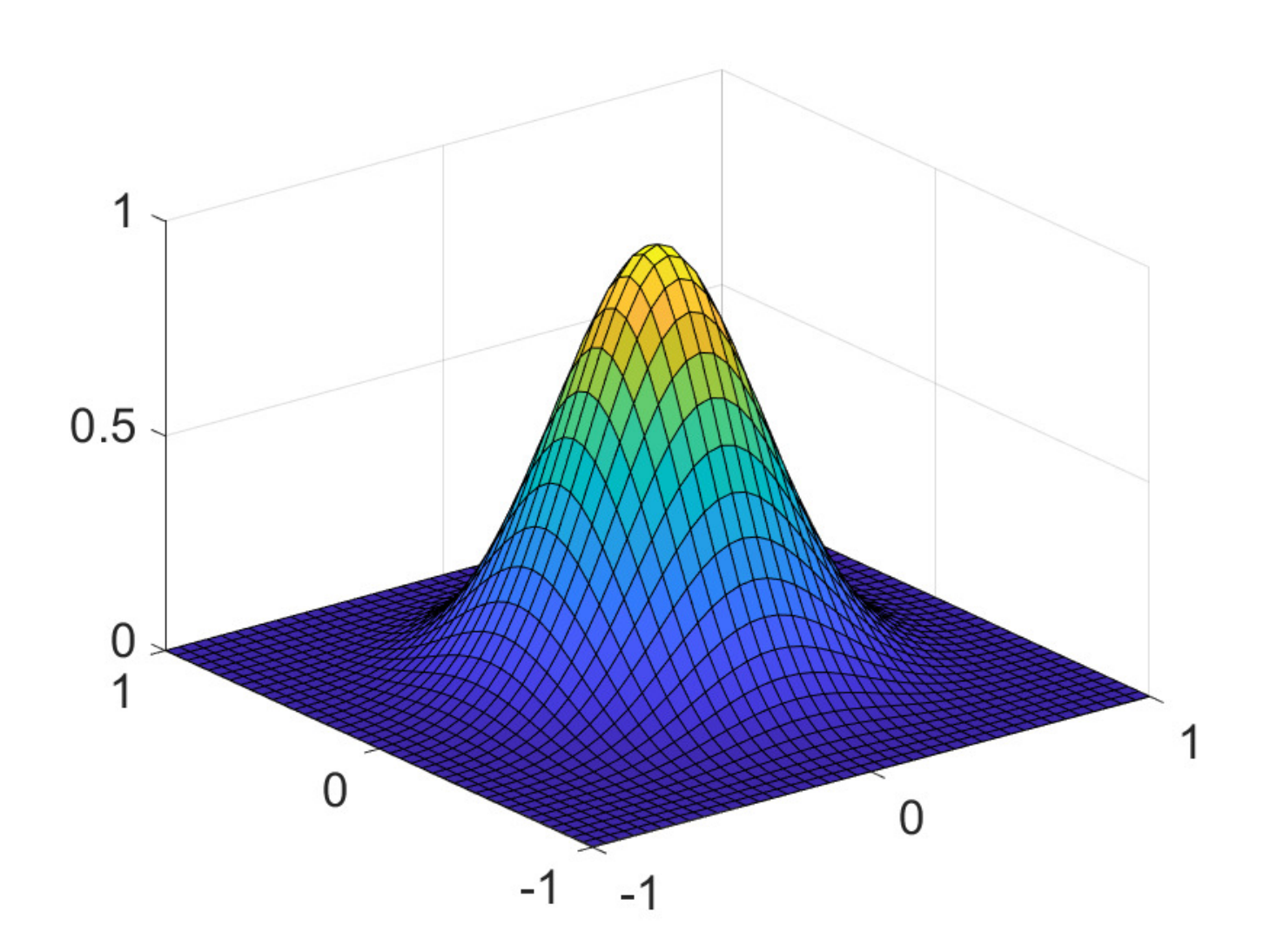}
	\includegraphics[scale = 0.4]{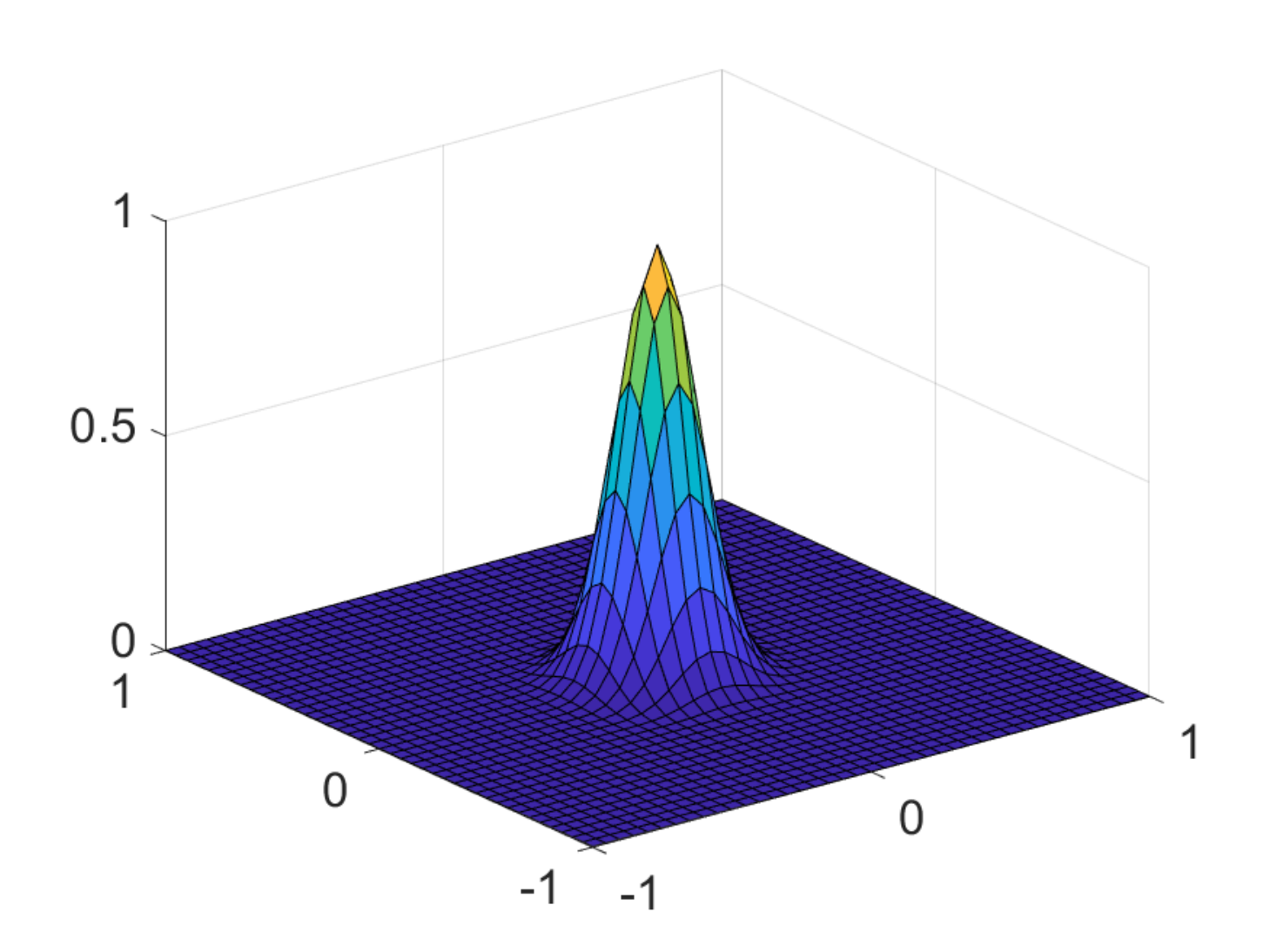}
	\caption{Wendland function with $\sigma = 0.8$ (left) and $\sigma = 2$ (right).}
\end{figure}

RBFs can be used as a tool in interpolation and approximations methods in a mesh-free environment. For this we consider a continuous function $f: \Omega \rightarrow \mathbb{R}$ with $\Omega \subset \mathbb{R}^d$ and a set of pairwise distinct interpolation points (or nodes) $X\coloneqq \{x_1, \dots, x_n\}\subset\Omega$ and the corresponding function evaluations. In this paper we use RBFs in a moving-least squares mode within a Shepard approximation scheme (see e.g. Chapter 23 
in \cite{F07}). 
In this case, the RBF bases are used to form $n$ weights 
\begin{equation}\label{eq:shepard_basis}
\psi^\sigma_i(x)\coloneqq  \frac{\varphi^\sigma(\|x-x_i\|_2)}{\sum_{j=1}^n \varphi^\sigma(\|x-x_j\|_2)}, \;\; 1\leq i\leq n,
\end{equation}
and the Shepard approximant $S^\sigma[f](x)$ is formed as 
\begin{equation}\label{eq:shepard_approx}
    S^\sigma[f](x) \coloneqq \sum_{i=1}^{n} f(x_i)\psi_i^\sigma (x).
\end{equation}
Observe that each $\psi_i(x)$ is compactly supported in $B(x_i, 1/\sigma)\subset\Omega$ and non negative, and the weights form a partition of unity, 
i.e., $\sum_{i=1}^n \psi_i^\sigma(x) = 1$ for all $x\in\Omega_{X, \sigma}$ with
\begin{equation}\label{eq:omega_z_sigma}
\Omega_{X, \sigma}\coloneqq \bigcup_{x\in X} B(x, 1/\sigma)\subset\R^d.
\end{equation} 
This implies that $S^\sigma[f](x)$ is actually a 
convex combination of the function values.
Moreover, the compact support of the weights leads to a computational advantage and a localization. In particular, the Shepard weights are evaluated by constructing a distance vector $D\in\R^{n}$ with $D_{i}\coloneqq \|x-x_i\|_2$ by computing only the entries $D_i$ such that $\|x-x_i\|\leq 1/\sigma$. This operation can be implemented by a range search.

An additional advantage of the Shepard method is that the the construction of the approximant \eqref{eq:shepard_approx} can be directly obtained from the function values and the evaluation of the weights, without solving any linear system.

As RBF-based methods work with unstructured meshes, to obtain error estimates in this context it is common to consider the \emph{fill distance} and the \emph{separation distance}
\begin{equation*}
h\coloneqq h_{X, \Omega}\coloneqq \sup_{x\in \Omega}\min_{x_i\in X}\|x-x_i\|,
\quad \quad q = q_{X} \coloneqq\min_{x_i \neq x_j\in X}\|x_i - x_j \|
\end{equation*}
The fill distance replaces the mesh size and it is the radius of the largest ball in $\Omega$ which does not contain any point from $X$, and it gives a quantification of the well spread of the approximation nodes in the domain. On the other hand, the separation distance
quantifies the minimal separation between different approximation points. We remark that for any sequence of points it holds $\frac{1}{2} q \leq h$, but the inverse inequality $h\leq c q$, $c>0$, does not hold unless the points are asymptotically uniformly distributed.

General error statements for Shepard approximation can be found in \cite{F07}. 
In this paper we will work with the result of \cite{JS15}, that we will recall in the following.

%

\section{The coupling between DP and RBF}
\label{Section4}
\setcounter{section}{4}
\setcounter{equation}{0}
\setcounter{theorem}{0}
\setcounter{algorithm}{0}
\renewcommand{\theequation}{\arabic{section}.\arabic{equation}}
To overcome the difficulty of solving analitically equation \eqref{HJB}, in this section we introduce the core approach of our work. We first recall the 
semi-Lagrange discretization of \eqref{HJB} by means of Shepard interpolation, then we propose a method to generate unstructured meshes, we define a strategy to select the shape parameter, derive error estimates of the overall approximation method and, finally, we summarize everything in one algorithm.

We remark that for the purpose of numerical computation we consider only finite horizons $t\in[0, T]$ with a given $T>0$ large enough to simulate the infinite horizon problem. We also assume that the dynamics evolve for each initial value and control parameter within a compact set $\Omega\subset\R^d$. 

\subsection*{Semi-Lagrangian scheme for \eqref{HJB}}
We first chose a temporal step size $\Delta t>0$ and build a grid in time such that $t_k = k \Delta t$ with $k \in \mathbb{N}$. We will discuss in the 
following how to define a spatial discretization, and for now we just denote it as $X = \{x_{1},x_{2}, \dots, x_{n} \}\subset\Omega$.
Furthermore, the set $U$ is also discretized by setting the control $u_k = u(t)\in U$ for $t \in [t_k, t_{k+1})$ constant in the considered time interval.
To introduce the approximation of the value function, we represent the Shepard approximant as an operator 
\begin{equation}\label{eq:shepard_operator}
 S^\sigma: (L^{\infty},\|\cdot\|_{\infty}) \rightarrow (\mathcal{W},\|\cdot\|_{\infty}),
\end{equation}
where $\mathcal{W} = \mbox{span}\{\psi_1^\sigma, \psi_2^\sigma, \cdots \psi_n^\sigma\}$ as in \eqref{eq:shepard_basis}.
We remark that the Shepard approximation uses as interpolation nodes the same points $X$ 
defined above.
For now we make no special assumption on its value, while its selection will be 
discussed later in the section.

We aim at the reconstruction of the vector $\{V_j\}_{j=1}^n\in\R^n$ where $V_j$ is the approximate value for $v(x_j)$ for each $x_j\in X$. The full discretization 
of equation \eqref{HJB} is obtained starting from a classical approach (see e.g. \cite{FF13}), but replacing as in \cite{JS15} the 
local linear interpolation operator on a structured grid with the Shepard approximation operator. 
This discretization reads
\begin{equation}
	\label{fully-discrete}
	V_j = [W_\sigma(V)]_{j}\coloneqq \min_{u \in U} \bigg\{  \Delta t\,g(x_j,u) + (1-\Delta t \lambda) S^\sigma[V](x_j + \Delta t\, f(x_j,u))\bigg\}.
\end{equation}
To compute $V_j$ the set $U$ is discretized in a finite number $M\in\N$ of points $U_M\coloneqq \{u_1, \dots, u_M\}$, and the minimum is computed by comparison. The full 
approximation scheme of the Value 
function is known as the Value Iteration (VI) method, and it is obtained by iteration of \eqref{fully-discrete}, i.e.,
\begin{equation}
\label{fixedpoint}
    V^{k+1} = W_\sigma(V^{k}),\quad k =0,1,\ldots.
\end{equation}
To ensure convergence of the scheme it is necessary that $W_\sigma$ is a contraction. In this context, the Shepard operator offers another striking 
benefit in comparison with plain RBF interpolation. Indeed, $S^\sigma$ in
\eqref{eq:shepard_operator} has unit norm, and this implies that the right hand side of \eqref{fully-discrete} is a contraction if $\Delta t \in \left(0,1/\lambda\right]$ (see \cite{JS15} for the details, and especially Lemma 2). Therefore, the convergence of 
the value iteration scheme is guaranteed.

As soon as we obtain an approximation of the value function, we can compute an approximation of the feedback control as 
\begin{equation}
	u_{n}^{*}(x) = \operatorname*{ arg\,min}_{u \in U} \{\Delta t g(x,u) + (1-\lambda \Delta t )S^\sigma[V](x+\Delta t f(x,u)) \},
\end{equation}
with $x=y(t_{n})$.  Thus, we are able to perform a reconstruction of an optimal trajectory $y^{*}$ and optimal control $u^{*}$.

Under the assumption that the fill distance $h$ decays to zero and that the shape parameter scales as $\sigma = \theta / h$, in \cite{JS15} it is proven 
that this approximation scheme converges. More precisely, under suitable assumptions on $f$ and $g$ Theorem 3 in \cite{JS15} guarantees that $\|v-V\|_{\infty}\leq (C/\theta) h$, where $C$ depends on the dynamics but not on the discretization.


Despite these convincing theoretical guarantees, the requirement that $h=h_{X, \Omega}$ decays to zero is too restrictive in our setting, since a filling of the entire 
$\Omega$ may be out of reach for high dimensional problems. Moreover, as already mentioned in Section \ref{Section3}, Shepard 
method perform approximations in high dimensions and unstructured grids, while in \cite{JS15} the authors focused on a given configuration for the shape 
parameter and an equidistant grid. In the next paragraphs we will explain how to select the shape parameter and to generate unstructured meshes to solve high dimensional problems.


\subsection*{On the Scattered Mesh}
Different possibilities are available for the definition of the discretization $X$ of the spatial domain. A standard choice is to use an equi-distributed grid, 
which covers the entire space and usually provides accurate results for interpolation problems. Unfortunately, for higher dimensional problems it is 
impossible to think to work on equi-distributed grid, as their size grows exponentially. 
This is a particular limitation in our case, since our goal is to control PDEs, whose discretization leads to high dimensional problems e.g. $d>10^3$. 
On the other hand, a random set of points is computationally efficient to generate and to use, but in this case additional care should be taken because the distribution of points can be irregular (some regions can be more densely populated than others) and the fill distance may  decrease only very slowly when increasing the number of points. 

In general terms, there is a tradeoff between keeping the grid at a reasonable size and the need to cover the relevant part of the computational domain. In 
particular, it is well known that the fill distance for any sequence of points $\{X_n\}_{n\in\N}$ can at most decrease as $h\leq c_\Omega n^{-1/d}$ in $\R^d$ 
for a suitable constant $c_\Omega>0$ depending only on the geometry of the domain. Observe that uniform points have precisely this asymptotic decay of the fill distance.
Thus, an exponentially growing number of points is required to obtain a good covering of $\Omega$ as $d$ increases.

The key point to overcome these limitations is to observe that the evolution of the system provides itself an indication of the regions of interest within the 
domain. Following this idea, we propose a discretization method driven by the dynamics of the control problem \eqref{dyn}. Observe that a similar idea has been 
used in \cite{SH18} to compute the value function along the trajectories of open loop control problems. Also in \cite{AFS19} the grid has been generated by points of the dynamics leading to the solution of the HJB equation on a tree structure.

To define our dynamics-dependent grid we fix a time step $\overline{\Delta t}>0$, a maximum number $\overline K\in\N$ of discrete times and, for $\overline 
L, \overline M >0$, some initial conditions of interest and a discretization of the control space, i.e., 
\begin{equation*}
\overline{X}\coloneqq \{\bar x_1, \bar x_2, \ldots, \bar x_{\overline L}\} \subset \Omega, \quad
\overline{U}\coloneqq \{\bar u_1,\bar u_2, \ldots, \bar u_{\overline{M}}\}\subset U.
\end{equation*}
Observe that all these parameters do not need to coincide with the ones used in the solution of the value iteration \eqref{fully-discrete}, but are rather 
only used to construct the grid. Moreover, in general we use $\overline{\Delta t} > \Delta t$ and $\overline{M}< M$, i.e., the discretization 
used to construct the mesh is coarser than the one use to solve the control problem.

For a given pair of initial condition $\bar x_i\in \overline{X}$ and control $\bar u_j\in \overline{U}$ we solve numerically \eqref{dyn} to obtain trajectories
\begin{align}\label{grid-dyn}
x_{i,j}^{k+1} & = x_{i,j}^k + \overline{\Delta t}\, f(x_{i,j}^k, \bar u_j), \quad k=1,\ldots,\bar K-1,\\
x_{i,j}^1& = \bar x_i,\notag
\end{align}
such that $x_{i,j}^k$
is an approximation of the state variable with initial condition $\bar x_i$, constant control $u(t)=\bar u_j$, at time $k 
\overline{\Delta t}$. For each pair $\left(\bar x_i, \bar u_j\right)$ we obtain the set $X\left(\bar x_i, \bar u_j\right) 
\coloneqq \{x_{i,j}^1,\ldots,x_{i,j}^{\bar K}\}$ containing the discrete trajectory, and our mesh is defined as
\begin{equation}\label{eq:mesh}
X\coloneqq X(\overline X, \overline U, \overline \Delta t, \overline K)\coloneqq \bigcup\limits_{i=1}^{\bar L} \bigcup\limits_{j=1}^{\bar M} X\left(\bar x_i, \bar u_j\right).
\end{equation}
 
This choice of the grid is particularly well suited for the problem under consideration, as it does not aim at filling the space $\Omega$, but instead it 
provides points along trajectories of interest. In this view, the values of $\overline X, \overline U, \overline \Delta t, \overline K$ should be 
chosen so that $X$ contains points that are suitably close to the points of interest for the solution of the control problem. 
In the following proposition we provide a quantitative version of this idea, that will be the base of our error estimate in Theorem \ref{theo:us}.

\begin{proposition}\label{prop:mesh}
Let $X\coloneqq X(\overline{X}, \overline{U}, \overline{\Delta t}, \overline{K})$ be the dynamics-dependent mesh of \eqref{eq:mesh}, and assume that $f$ is 
uniformly 
bounded i.e., there exist $M_f>0$ such that
\begin{align*}
\sup\limits_{x\in\Omega, u\in U} \|f(x, u)\| &\leq M_f.
\end{align*}
Then for each $x\in X$, $\Delta t>0$ and $u\in U$ it holds 
\begin{align}\label{eq:bound_distance_1}
\dist(x + \Delta t f(x, u), X) \leq & M_f \Delta t.
\end{align}
Assume furthermore that $f$ is uniformly Lipschitz continuous in both variables, i.e., there exist $L_x, L_u>0$ such that
\begin{align*}
\|f(x, u)-f(x', u)\| &\leq L_{x} \|x-x'\|\;\; \forall x, x'\in \Omega,\;\; u\in U,\\
\|f(x, u)-f(x, u')\| &\leq L_{u} \|u-u'\|\;\; \forall x\in \Omega,\;\; u, u'\in U.
\end{align*}
Then, if $x\coloneqq x^k(x_0, u, \Delta t)\in\Omega$ is a point on a discrete trajectory with initial point $x_0\in\Omega$, control $u\in U$, timestep $\Delta t>0$, 
and time 
instant $k\in \N$, $k\leq \bar K$, it holds
\begin{align}\label{eq:bound_distance_2}
\dist(x&, X)
\leq \left(|\Delta t - \overline{\Delta t}| \bar K M_f + \min_{\bar x\in \bar X} \|\bar x- x_0\| + \bar K \overline{\Delta t} L_u \min_{\bar u\in \bar U} 
\|\bar u- u\|\right) e^{\bar K \overline \Delta t L_x}.
\end{align}
\end{proposition}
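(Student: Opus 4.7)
The bound \eqref{eq:bound_distance_1} is almost immediate: because $x$ itself already belongs to $X$, taking $x$ as a candidate nearest neighbor for $x + \Delta t\, f(x,u)$ and using the uniform bound on $\|f\|$ gives the claim in one line. I would simply note $\dist(x + \Delta t f(x,u), X) \leq \|x + \Delta t f(x,u) - x\| = \Delta t \|f(x,u)\| \leq M_f \Delta t$.

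For \eqref{eq:bound_distance_2} the plan is to compare the trajectory $y^k \coloneqq x^k(x_0,u,\Delta t)$ with the ``closest'' discrete trajectory actually stored in $X$. Specifically, I would let $\bar x_{i^*}\in \overline X$ and $\bar u_{j^*}\in \overline U$ minimize $\|\bar x - x_0\|$ and $\|\bar u - u\|$ respectively, set $z^k \coloneqq x_{i^*,j^*}^k \in X$, and then estimate $\dist(y^k, X) \leq \|y^k - z^k\|$. The two recurrences are $y^{k+1} = y^k + \Delta t\, f(y^k, u)$ and $z^{k+1} = z^k + \overline{\Delta t}\, f(z^k, \bar u_{j^*})$, so by inserting $\pm \Delta t\, f(z^k, u)$ and $\pm \Delta t\, f(z^k, \bar u_{j^*})$ and applying the two Lipschitz hypotheses together with the uniform bound on $f$, I expect a clean one-step estimate of the form
\begin{equation*}
\|y^{k+1} - z^{k+1}\| \leq (1+\Delta t\, L_x)\|y^k - z^k\| + \Delta t\, L_u \|u - \bar u_{j^*}\| + |\Delta t - \overline{\Delta t}|\, M_f.
\end{equation*}

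The three error sources, initial condition, control, and time step, are now cleanly separated, so a standard discrete Gronwall iteration (using $(1 + \Delta t L_x)^k \leq e^{k \Delta t L_x}$ and the geometric-sum bound $\sum_{i=0}^{k-1}(1+\Delta t L_x)^i \leq k\, e^{k\Delta t L_x}$) yields
\begin{equation*}
\|y^k - z^k\| \leq e^{k\Delta t L_x}\|x_0 - \bar x_{i^*}\| + k\, e^{k\Delta t L_x}\bigl(\Delta t\, L_u \|u-\bar u_{j^*}\| + |\Delta t - \overline{\Delta t}|\, M_f\bigr).
\end{equation*}
Bounding $k\leq \bar K$ and, in the regime of interest where the value-iteration step $\Delta t$ is at most the mesh-construction step $\overline{\Delta t}$, using $k\Delta t \leq \bar K\,\overline{\Delta t}$, then minimizing over $\bar x_{i^*}$ and $\bar u_{j^*}$ recovers the stated form. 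The only delicate point is arranging the telescoping so that $L_x$, $L_u$, and the timestep discrepancy each appear in exactly one term of the recurrence; once this decomposition is in place, every remaining step is routine.
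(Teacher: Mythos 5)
Your argument for \eqref{eq:bound_distance_1} is exactly the paper's. For \eqref{eq:bound_distance_2} you also follow the paper's strategy: compare the given trajectory with the stored trajectory started from the nearest $\bar x_{i^*}$ and driven by the nearest $\bar u_{j^*}$, split the increment into an initial-condition part, a control part and a timestep part by adding and subtracting intermediate terms, and close with discrete Gr\"onwall; the only cosmetic difference is that you iterate a one-step recurrence while the paper works with the summed form $x^k = x_0+\Delta t\sum_p f(x^p,u)$. There is, however, one real discrepancy in the constants. Because you insert $\pm\Delta t\,f(z^k,u)$ and $\pm\Delta t\,f(z^k,\bar u_{j^*})$, your recurrence carries the factor $\Delta t$ on both Lipschitz terms, and Gr\"onwall then produces $e^{k\Delta t L_x}$ and $k\Delta t\,L_u\|u-\bar u_{j^*}\|$. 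The proposition asserts the bound with $e^{\bar K\overline{\Delta t}L_x}$ and $\bar K\overline{\Delta t}L_u$ for \emph{every} $\Delta t>0$, so your proof as written only delivers the stated inequality under the extra hypothesis $\Delta t\le\overline{\Delta t}$, which you flag as ``the regime of interest'' but which the statement does not assume. The fix is a one-line change in the telescoping: insert $\pm\overline{\Delta t}\,f(y^k,u)$ and $\pm\overline{\Delta t}\,f(z^k,u)$ instead, so that the timestep mismatch $|\Delta t-\overline{\Delta t}|$ multiplies the uniformly bounded term $f(y^k,u)$ while the mesh timestep $\overline{\Delta t}$ multiplies both Lipschitz differences; the recurrence becomes $\|y^{k+1}-z^{k+1}\|\le(1+\overline{\Delta t}L_x)\|y^k-z^k\|+\overline{\Delta t}L_u\|u-\bar u_{j^*}\|+|\Delta t-\overline{\Delta t}|M_f$, and the stated bound then follows unconditionally. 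This is precisely how the paper arranges its added and subtracted term, namely $\overline{\Delta t}\sum_p f(x^p(x_0,u,\Delta t),u)$.
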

\begin{proof}
If $x\in X$ we simply have
\begin{align*}
\dist(x + \Delta t f(x, u), X) 
&= \min\limits_{x'\in X} \|x + \Delta t f(x, u) - x'\|
\leq\|x + \Delta t f(x, u) -x\|\\
&=\Delta t \|f(x, u)\| \leq M_f \Delta t,
\end{align*}
which gives the bound \eqref{eq:bound_distance_1} using only the boundedness of $f$.
 
To prove \eqref{eq:bound_distance_2} we need to work explicitly with the initial points and the control values. By assumption we have 
\begin{equation*}
x = x^{k}(x_0, u, \Delta t) = x_0 + \Delta t\sum_{p=0}^{k-1} f(x^p(x_0, u, \Delta t), u),
\end{equation*}
and since $x'\in X$, using the definition \eqref{eq:mesh} we can choose it as
\begin{equation*}
x' = x_{\ell, m}^{k} = \bar x_\ell + \overline{\Delta t}\sum_{p=0}^{k-1} f(x_{\ell m}^p,  \bar u_m),
\end{equation*}
for some $\ell\in\{1, \dots, \bar L\}$, $m\in\{1,\dots, \bar M\}$. It follows that
\begin{align*}
x^{k}(x_0, u, \Delta t) -  \bar x_{\ell, m}^{k} 
=&\phantom{+} x_0-\bar x_\ell \\
&+ \Delta t\sum_{p=0}^{k-1} f(x^p(x_0, u, \Delta t), u) - \overline{\Delta t}\sum_{p=0}^{k-1} f(x_{\ell m}^p, \bar u_m),
\end{align*}
and thus adding and subtracting $\overline{\Delta t}\sum_{p=0}^{k-1} f(x^p(x_0, u, \Delta t), u)$ gives
\begin{align*}
\|x^{k}(x_0, u, \Delta t) -  x_{\ell, m}^{k}\| 
\leq&\phantom{+} \|x_0-\bar x_\ell\| 
+ |\Delta t - \overline{\Delta t}| \sum_{p=0}^{k-1} \|f(x^p(x_0, u, \Delta t), u)\|\\
&+  \overline{\Delta t} \sum_{p=0}^{k-1} \|f(x^p(x_0, u, \Delta t), u) - f(x_{\ell m}^p, \bar u_m)\|\\
\leq&\phantom{+} \|x_0-\bar x_\ell\| 
+ |\Delta t - \overline{\Delta t}| k M_f \\
&+  \overline{\Delta t} \sum_{p=0}^{k-1} \|f(x^p(x_0, u, \Delta t), u) - f(x_{\ell m}^p, \bar u_m)\|.
\end{align*}
Now adding and subtracting $f(x^p(x_0, u, \Delta t), \bar u_m)$ in the sum, and using the Lipschitz continuity of $f$, we get
\begin{align*}
\|x^{k}(x_0, u, \Delta t) -  x_{\ell, m}^{k}\| 
\leq&\phantom{+} \|x_0-\bar x_\ell\| 
+ |\Delta t - \overline{\Delta t}| k M_f \\
&+  \overline{\Delta t} \sum_{p=0}^{k-1} \left(L_u \|u - \bar u_m\| + L_x \|x^p(x_0, u, \Delta t) - x^p_{\ell,m}\|\right)\\
=&\phantom{+} \|x_0-\bar x_\ell\| 
+ |\Delta t - \overline{\Delta t}| k M_f + k \overline{\Delta t} L_u \|u - \bar u_m\|\\
&+ \overline{\Delta t} L_x\sum_{p=0}^{k-1} \|x^p(x_0, u, \Delta t) - x^p_{\ell,m}\|.
\end{align*}
Applying the discrete Gr{\"o}nwall lemma to this inequality gives 
\begin{align*}
\|x^{k}(x_0, u, \Delta t) -  x_{\ell, m}^{k}\|
&\leq \left(\|x_0-\bar x_\ell\| + |\Delta t - \overline{\Delta t}| k M_f + k \overline{\Delta t} L_u \|u - \bar u_m\|\right) e^{k \overline \Delta t L_x},
\end{align*}
and since $\ell$ and $m$ are free, we can choose them as $\bar u_m\coloneqq \argmin_{\bar u\in \bar U} \|u - \bar u\|$ and $x_\ell\coloneqq \argmin_{\bar x\in \bar X} \|x_0- 
\bar x\|$. Finally, bounding $k$ by $\bar K$ gives \eqref{eq:bound_distance_2}.
\end{proof}

\subsection*{Selection of the shape parameter} 
The quality of Shepard approximation strongly depends from the choice of shape parameter $\sigma$, both in general for RBF approximation \cite{F07} and in the 
special case of the solution of control problems \cite{JS15}.
As mentioned in the introduction, several techniques exists to tune the shape parameter in the RBF literature, such as cross validation and maximum likelihood estimation (see e.g. Chapter 14 in \cite{F15}), but they are designed to optimize the value of $\sigma$ in a fixed approximation setting. 
In our case, on the other hand, we need to construct an approximant at each iteration  $k$ within the value iteration \eqref{fully-discrete}. This makes the 
existing methods computationally expensive and difficult to adapt to the target of minimizing the error in the iterative method. 

For these reasons, we propose here a new method to select the shape parameter based on the minimization of a problem-specific indicator, namely the residual 
$R(\sigma)\coloneqq R(V_{\sigma})$. Assuming that the value iteration with parameter $\sigma$ has been stopped at iteration $k_{\mathrm{final}}$ giving the solution 
$V_{\sigma}\coloneqq V^{k_{\mathrm{final}}}$, we define the residual as 
\begin{equation}\label{residual}
        R(\sigma)\coloneqq  \|V_{\sigma} - W_\sigma(V_{\sigma}) \|_\infty,
\end{equation}
and we choose the shape parameter that minimizes this quantity with respect to $\sigma$. 

To get a suitable scale for the values of $\sigma$, we parametrize it in terms of the grid $X$, similarly to what is done in \cite{JS15} by setting $\sigma\coloneqq  
\theta/h_{\Omega, X}$ for a given $\theta >0$. Since $h_{\Omega, X}$ is difficult to compute or even to estimate in high dimensional problems, we resort to 
setting $\sigma = \theta/q_X$ and we optimize the value of $\theta > 0$. Observe that the separation distance $q_X$ is an easily computable quantity that 
depends only on $X$, and it is thus actually feasible to use this parametrization even in high dimension. 

Choosing an admissible set of parameters $\mathcal{P} \coloneqq [\theta_{\min}, \theta_{\max}]\subset \mathbb{R}^{+}$, the parameter is thus chosen by solving 
the optimization problem	
 \begin{equation}\label{opt-shape}
\bar\theta\coloneqq \argmin\limits_{\theta\in\mathcal{P}} R(\theta/q_X) = \argmin\limits_{\theta\in\mathcal{P}}\|V_{\theta/q_X} - W_{\theta/q_X}(V_{\theta/q_X}) 
\|_\infty.
\end{equation} 
\begin{remark}
This problem can be solved by using a comparison method or e.g. an inexact gradient method. The former means to discretize the set 
$\mathcal{P}$ as $\{\theta_1,\ldots,\theta_{N_p}\}\subset\mathcal{P}$ and to compute all the value functions for all $\theta_i$, $i=1,\ldots,N_p.$ The latter 
considers a projected gradient method where the parameter space $\mathcal{P}$ is continuous and the derivative is computed numerically as 
$$ R_\theta =\dfrac{R({\theta+\varepsilon}) - R({\theta})}{\varepsilon},$$
for some fixed $\varepsilon>0$. 
In the numerical tests, we will compare both minimization strategies in the low dimensional case, while we will concentrate on the comparison method in high dimensional one.
\end{remark}

\subsection*{Error Estimates}

We adapt the classical convergence theory that is used to prove rates of convergence for the value iteration when 
linear interpolation is used. In 
particular, the following argumentation follows the discussion in \cite[Section 8.4.1]{FF13}.

The idea is to estimate the time and space discretizations separately. Since the time discretization is independent of the interpolation scheme used in the 
space discretization, we just recall the following result from \cite[Section 8.4.1]{FF13}. Although the original result holds for the supremum norm 
over $\Omega$, we formulate it here for a general compact subset $\tilde \Omega\subset \Omega$ in order to deal with the space discretization later. We use the notation $\|f\|_{\infty, \tilde\Omega}\coloneqq  \sup_{x\in \tilde\Omega}|f(x)|$.
\begin{theorem}\label{thm:error_estimate}
Let $v$ be the exact value function, and let $v^{\Delta t}$ be the solution of the value iteration \eqref{fully-discrete} without space discretization, i.e., 
\begin{align}\label{eq:semi_discrete}
v^{\Delta t}(x) = \min_{u \in U} \bigg\{  \Delta t\,g(x,u) + (1-\Delta t \lambda) v^{\Delta t} (x + \Delta t\, f(x,u))\bigg\}.
\end{align}
If $v$ is Lipschitz continuous, for each compact subset $\tilde \Omega\subset \Omega$ there exists a constant $C\coloneqq C(\tilde\Omega)>0$ such that
\begin{align}\label{eq:time_error}
\left\|v- v^{\Delta t}\right\|_{\infty,\tilde\Omega}\leq C \Delta t^{1/2}.
\end{align}
Assume additionally that the following hold:
\begin{enumerate}
\item $f$ is uniformly bounded, $U$ is convex, $f(x, u)$ is linear in $u$.
\item $g(\cdot, u)$ is Lipschitz continuous and $g(x, \cdot)$ is convex.
\item There exists an optimal control $u^\star \in \mathcal U$.
\end{enumerate}
Then there exists $C'\coloneqq C'(\tilde\Omega)>0$ such that
\begin{align}\label{eq:time_error_refined}
\left\|v- v^{\Delta t}\right\|_{\infty,\tilde\Omega}\leq C' \Delta t.
\end{align}
\end{theorem}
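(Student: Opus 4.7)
The plan is to adapt the classical argument of \cite[Section 8.4.1]{FF13} to a compact subset $\tilde \Omega\subset \Omega$, observing that the localization only affects the constants because of the uniform boundedness of $f$: trajectories starting in $\tilde\Omega$ remain in a slightly larger compact set over the relevant finite horizon, and the contraction property of the scheme \eqref{eq:semi_discrete} with factor $(1-\lambda\Delta t)$ continues to hold in $L^\infty(\tilde\Omega)$ when $\Delta t\in(0, 1/\lambda]$. I would first set up the one-step comparison between the continuous DPP \eqref{DPP} (applied with $\tau=\Delta t$) and the semi-discrete equation \eqref{eq:semi_discrete}, both of which are fixed-point formulations for $v$ and $v^{\Delta t}$ respectively.

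For the bound \eqref{eq:time_error}, fix $x\in\tilde\Omega$ and choose $\varepsilon$-optimal controls for both sides. The difference splits into two consistency errors: (i) the rectangular-quadrature error $\int_0^{\Delta t} g(y_x(s),u(s))e^{-\lambda s}ds - \Delta t\, g(x,u)$, and (ii) the Euler-step error $v(y_x(\Delta t)) - v(x+\Delta t f(x,u))$. Lipschitz continuity of $f$ and $g$ together with the Lipschitz continuity of $v$ (hypothesis of the theorem) and Gronwall's inequality control both terms. Combining with the contractive structure yields a recursion of the shape
\[
\|v-v^{\Delta t}\|_{\infty,\tilde\Omega} \le (1-\lambda\Delta t)\,\|v-v^{\Delta t}\|_{\infty,\tilde\Omega} + \rho(\Delta t),
\]
where $\rho(\Delta t) = O(\Delta t^{3/2})$ is produced by optimizing an auxiliary scale parameter in the standard modulus-of-continuity / doubling-of-variables estimate that underlies consistency of first-order HJB schemes under mere Lipschitz regularity. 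Dividing by $\lambda\Delta t$ leaves the announced $\Delta t^{1/2}$ rate.

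For the sharpened bound \eqref{eq:time_error_refined}, the plan is to exploit the three extra hypotheses to replace the square-root modulus by a linear one. Given the optimal control $u^\star \in \mathcal U$ from hypothesis (3), I would approximate it by the piecewise-constant control $\bar u(t) \coloneqq \bar u_k \coloneqq \frac{1}{\Delta t}\int_{t_k}^{t_{k+1}} u^\star(s)\,ds$ on $[t_k,t_{k+1})$, which is admissible because $U$ is convex. Since $f$ is linear in $u$, the trajectory generated by $\bar u$ differs from $y_x$ by $O(\Delta t)$ on each subinterval after integration, and convexity of $g(x,\cdot)$ combined with Jensen's inequality ensures that the running-cost integral over each subinterval is not smaller for $\bar u$ than for $u^\star$ up to $O(\Delta t)$ terms. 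Inserting this structured control into the comparison performed in the previous paragraph removes the sublinear factor and leads to a recursion with $\rho'(\Delta t) = O(\Delta t^2)$, hence to \eqref{eq:time_error_refined}.

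The main obstacle is the first part: the $\Delta t^{1/2}$ rate is not a direct consequence of Lipschitz continuity of $v$ but relies on a viscosity-type comparison argument that is technical and nonstandard to reconstruct from scratch. My plan would therefore be to invoke \cite[Section 8.4.1]{FF13} for this core estimate, checking only that all intermediate quantities (bounds on $f$, $g$, on the trajectories, and on the Lipschitz constant of $v$) can be taken uniform in $\tilde\Omega$ so that the cited constants carry over to $C(\tilde\Omega)$ and $C'(\tilde\Omega)$. The convex-structure refinement is conceptually cleaner but hinges on the existence of $u^\star$; one must verify that the averaged control $\bar u$ and all derived quantities stay in $\mathcal U$ and that the inequality from Jensen's inequality is oriented correctly when comparing costs.
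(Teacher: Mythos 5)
The paper gives no proof of this theorem: it is explicitly recalled from \cite[Section 8.4.1]{FF13} and merely restated on a compact subset $\tilde\Omega$, so your plan of sketching the classical consistency-plus-contraction argument and then invoking \cite{FF13} for the core $\Delta t^{1/2}$ estimate (checking only that the bounds on $f$, $g$, the trajectories and the Lipschitz constant of $v$ localize to $\tilde\Omega$) is essentially the same approach as the paper's. Your outline of the mechanism matches the standard theory; the one slip is the stated direction of Jensen's inequality in the refined bound --- convexity of $g(x,\cdot)$ gives that the running cost of the averaged control $\bar u$ is \emph{not larger} than the averaged cost of $u^\star$, which is exactly the direction needed to conclude $v^{\Delta t}\le v + C'\Delta t$ --- a point you yourself flag as requiring verification.
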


It remains now to quantify the error that is committed by introducing a space discretization, i.e., the error associated to the interpolation scheme in 
\eqref{fully-discrete}. This first requires a bound on the error of Shepard interpolation, and we report in the next proposition a result obtained in \cite{JS15}. 
In this case, the key idea is to scale the shape parameter of the RBF basis according to the fill distance of the mesh. According to Proposition 
\ref{prop:mesh}, we can control the fill distance $h_{X, \tilde \Omega}$ of the mesh $X$ within the set $\tilde \Omega$ obtained as the collection of 
the different trajectories of the discrete dynamics. In other words, we set
\begin{equation}\label{eq:set_omega_tilda}
\tilde \Omega\coloneqq  \tilde \Omega(\tilde X, \tilde U, \tilde T)\coloneqq  \left\{x\coloneqq  x^k(x_0, u, \Delta t): x_0\in \tilde X, u\in \tilde U, \Delta 
t\in \tilde T, k\leq \bar K\right\}.
\end{equation}
For this set, equation \eqref{eq:bound_distance_2} gives \begin{align}\label{eq:fill_distance_estimate}
&h_{X, \tilde\Omega} 
\coloneqq \sup\limits_{x\in\tilde\Omega}\dist(x, X)\nonumber\\
&\leq \left(\sup\limits_{\Delta t\in \tilde T}|\Delta t - \overline{\Delta t}| \bar K M_f + \sup\limits_{x_0\in \tilde X}\min\limits_{\bar x\in \bar X} \|\bar 
x- x_0\| + \bar K \overline{\Delta t} L_u \sup\limits_{u\in \tilde U}\min_{\bar u\in \bar U} 
\|\bar u- u\|\right) e^{\bar K \overline \Delta t L_x}
\end{align}

We can now recall from \cite{JS15}the error estimate for the Shepard approximation.
\begin{proposition}\label{prop:shepard_error}
Let $L_v>0$ be the Lipschitz constant of $v:\Omega\to \R$. Let $S^\sigma$ be the Shepard approximation of $v$ on $X$
obtained using a kernel with support contained in $B(0,1)$, and let $\sigma\coloneqq C / h_{X, \tilde\Omega}$ for a positive constant $C>0$. 
Then we have the bound
\begin{equation}\label{eq:error_shepard}
\left\|v - S^\sigma[v]\right\|_{\infty, \tilde\Omega} \leq C L_v h_{X, \tilde\Omega}.
\end{equation}
\end{proposition}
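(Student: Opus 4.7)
The plan is to exploit the partition-of-unity structure of the Shepard weights together with the compact support of the scaled kernel to reduce the interpolation error to a local Lipschitz estimate. First, I would verify that under the choice $\sigma = C/h_{X,\tilde\Omega}$ (for $C$ no larger than the admissible threshold), every $x\in\tilde\Omega$ belongs to $\Omega_{X,\sigma}$ as defined in \eqref{eq:omega_z_sigma}. Indeed, by the very definition of the fill distance there exists $x_i\in X$ with $\|x-x_i\|\leq h_{X,\tilde\Omega}\leq 1/\sigma$, so $x\in B(x_i, 1/\sigma)$ and in particular the denominator $\sum_j \varphi^\sigma(\|x-x_j\|)$ in \eqref{eq:shepard_basis} is strictly positive and $\sum_i\psi_i^\sigma(x)=1$.

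With the partition of unity in hand, I would write the pointwise error as
\begin{equation*}
v(x) - S^\sigma[v](x) = \sum_{i=1}^n \bigl(v(x)-v(x_i)\bigr)\psi_i^\sigma(x),
\end{equation*}
which is valid for each $x\in\tilde\Omega$. Using the Lipschitz continuity of $v$ and the nonnegativity of the weights, this yields
\begin{equation*}
|v(x) - S^\sigma[v](x)| \leq L_v \sum_{i=1}^n \|x-x_i\|\,\psi_i^\sigma(x).
\end{equation*}

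The decisive step is to exploit the compact support of $\varphi^\sigma$, which is contained in $B(0,1/\sigma)$: only indices $i$ with $\|x-x_i\|\leq 1/\sigma$ contribute a nonzero term, and for those indices $\|x-x_i\|\leq 1/\sigma$. Combining this with $\sum_i \psi_i^\sigma(x) = 1$ gives
\begin{equation*}
|v(x) - S^\sigma[v](x)| \leq \frac{L_v}{\sigma}.
\end{equation*}
Substituting the scaling $\sigma = C/h_{X,\tilde\Omega}$ and taking the supremum over $x\in\tilde\Omega$ produces the claimed bound \eqref{eq:error_shepard} (absorbing the multiplicative constant into $C$).

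The only delicate point to argue carefully is the legitimacy of the partition-of-unity identity at every $x\in\tilde\Omega$: one must ensure the relation $h_{X,\tilde\Omega}\leq 1/\sigma$ so that $\tilde\Omega\subset \Omega_{X,\sigma}$, which is precisely what the scaling $\sigma = C/h_{X,\tilde\Omega}$ (for an appropriate constant $C\leq 1$) guarantees. Everything else is a routine combination of Lipschitz continuity with the locality of the Shepard basis. Since this statement is recalled verbatim from \cite{JS15}, the more detailed argument (including the precise treatment of the constants and the boundary effects on the support radius) can be read off from that reference.
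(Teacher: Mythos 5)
Your argument is correct, and it is the standard (and essentially unique) proof of this estimate; the paper itself gives no proof here but simply recalls the statement from \cite{JS15}, where exactly this partition-of-unity plus local-Lipschitz argument is used, so there is nothing to contrast. Two small bookkeeping points: since the Wendland kernel vanishes \emph{at} the boundary of its support, you need the strict inequality $h_{X,\tilde\Omega} < 1/\sigma$ (i.e.\ $C<1$ in your reading) for the denominator in \eqref{eq:shepard_basis} to be positive at every $x\in\tilde\Omega$; and your derivation yields $\left\|v - S^\sigma[v]\right\|_{\infty,\tilde\Omega} \leq L_v/\sigma = (1/C)\,L_v\,h_{X,\tilde\Omega}$, so the constant appearing in \eqref{eq:error_shepard} is really $1/C$ rather than $C$ — the statement as written is literally consistent only if one reads the scaling as $\sigma = 1/(C h_{X,\tilde\Omega})$ with $C\geq 1$, which is the convention of \cite{JS15}. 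Neither point affects the substance of your proof.
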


With these tools, we can finally prove the following theorem.
\begin{theorem}\label{theo:us}
Let $\tilde \Omega$ be as in \eqref{eq:set_omega_tilda}, and assume that $\tilde U$ contains the two controls that are optimal for $v^{\Delta t}$ and for $V$. 
Then, under the assumptions of Proposition \ref{prop:shepard_error} it holds
\begin{align}\label{eq:thm}
&\|V - v^{\Delta t}\|_{\infty, \tilde \Omega} 
\leq \frac{C L_v}{\lambda} \frac{h_{X,\tilde \Omega}}{\Delta t}\\
&\leq \frac{C L_v}{\lambda} \frac{e^{\bar K \overline \Delta t L_x}}{\Delta t} \left(\sup\limits_{\Delta t\in \tilde T}|\Delta t - \overline{\Delta t}| \bar K 
M_f + \sup\limits_{x_0\in \tilde X}\min\limits_{\bar x\in \bar X} \|\bar 
x- x_0\| + \bar K \overline{\Delta t} L_u \sup\limits_{u\in \tilde U}\min_{\bar u\in \bar U} 
\|\bar u- u\|\right) 
\end{align}
\end{theorem}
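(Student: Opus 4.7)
The plan is to run the standard fixed-point / contraction comparison between $V$ and $v^{\Delta t}$, with the twist that $V$ is defined only on the mesh $X$ and must be extended to $\tilde\Omega$ through the Shepard operator $S^\sigma$. I would therefore work throughout with $\tilde V\coloneqq S^\sigma[V]$ on $\tilde\Omega$ and set
\begin{equation*}
E \coloneqq \|\tilde V - v^{\Delta t}\|_{\infty,\tilde \Omega}.
\end{equation*}
The goal is to close a self-bounding inequality of the form $E \leq (1-\lambda\Delta t)\,E + C L_v h_{X,\tilde\Omega}$, which solves to $E\leq (C L_v/\lambda)(h_{X,\tilde\Omega}/\Delta t)$, after which the second line in \eqref{eq:thm} follows by inserting the fill-distance estimate \eqref{eq:fill_distance_estimate}.

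First I would handle the passage from $X$ to $\tilde\Omega$. Using that $\{\psi_i^\sigma\}$ is a partition of unity of non-negative weights on $\tilde\Omega$, for any $x\in\tilde\Omega$ I write
\begin{equation*}
\tilde V(x)-v^{\Delta t}(x)= \sum_{i}\psi_i^\sigma(x)\bigl(V_i-v^{\Delta t}(x_i)\bigr)+\bigl(S^\sigma[v^{\Delta t}](x)-v^{\Delta t}(x)\bigr),
\end{equation*}
so that $E \leq \max_{x_i\in X}|V_i-v^{\Delta t}(x_i)| + \|S^\sigma[v^{\Delta t}]-v^{\Delta t}\|_{\infty,\tilde\Omega}$. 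The last term is controlled directly by Proposition \ref{prop:shepard_error} applied to $v^{\Delta t}$ (which inherits Lipschitz continuity from $v$ up to the constant $L_v$ in the same way the value function does), producing the interpolation error $C L_v h_{X,\tilde\Omega}$.

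Next I would bound the discrete error $|V_j - v^{\Delta t}(x_j)|$ for $x_j\in X$ by the usual min-vs-min comparison: $V$ satisfies $V_j = W_\sigma(V)_j$ and $v^{\Delta t}$ satisfies the semi-discrete DPP \eqref{eq:semi_discrete}, so using the control $u$ optimal for $v^{\Delta t}(x_j)$ on one side and the one optimal for $V_j$ on the other, the running-cost terms cancel and one gets
\begin{equation*}
|V_j-v^{\Delta t}(x_j)|\leq (1-\lambda\Delta t)\,\bigl|\tilde V(y)-v^{\Delta t}(y)\bigr|,\qquad y = x_j+\Delta t\,f(x_j,u^\star),
\end{equation*}
where $u^\star$ is one of the two optimal controls. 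The assumption that $\tilde U$ contains both optimal controls, combined with the definition \eqref{eq:set_omega_tilda} of $\tilde\Omega$, is exactly what guarantees $y\in\tilde\Omega$, so that the right-hand side is bounded by $(1-\lambda\Delta t)E$. Taking the supremum over $x_j\in X$ and combining with the previous display closes the recursion $E\leq (1-\lambda\Delta t)E + C L_v h_{X,\tilde\Omega}$, from which $E\leq (CL_v/\lambda)(h_{X,\tilde\Omega}/\Delta t)$ is immediate. The second inequality of \eqref{eq:thm} then follows from the explicit fill-distance bound \eqref{eq:fill_distance_estimate}.

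The only delicate point I expect is the geometric one just highlighted: ensuring that the pushed-forward point $x_j+\Delta t f(x_j,u^\star)$ still lies in $\tilde\Omega$ so that the Shepard extension $\tilde V$ can be evaluated there and compared to $v^{\Delta t}$ inside the norm $E$. This is precisely what the hypothesis on $\tilde U$ buys us, together with the construction of $\tilde\Omega$ from discrete trajectories up to $\bar K$ time steps; one should be careful, when making this precise, that the index $k$ of the point used to realize the distance in \eqref{eq:bound_distance_2} does not exceed $\bar K$, which amounts to requiring $\bar K\overline{\Delta t}$ to be at least as large as the time horizon implicitly used in the argument. Everything else is a routine assembly of Proposition \ref{prop:mesh}, Proposition \ref{prop:shepard_error} and the standard DPP contraction estimate.
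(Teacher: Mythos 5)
Your proposal is correct and follows essentially the same route as the paper: a two-sided comparison using the sub-optimality of each value function's optimal control, the add-and-subtract of $S^\sigma[v^{\Delta t}]$ to invoke Proposition \ref{prop:shepard_error} together with the contractivity of $S^\sigma$, and the resulting self-bounding inequality with factor $(1-\lambda\Delta t)$. The only (welcome) difference is organizational: you carry the error on $\tilde\Omega$ via the extension $\tilde V = S^\sigma[V]$ and isolate the mesh-level error explicitly, which makes precise the meaning of $\|V - v^{\Delta t}\|_{\infty,\tilde\Omega}$ that the paper leaves implicit, and you correctly flag the same two tacit points the paper relies on (the Lipschitz continuity of $v^{\Delta t}$ and the requirement that $z^\star = x_j + \Delta t\, f(x_j, u^\star)$ lies in $\tilde\Omega$).
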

\begin{proof}
We consider the control $u^\star$  that is optimal for $v^{\Delta t}$, and we define $z^\star\coloneqq  x_j+\Delta tf(x_j, u^\star)$ for $1\leq j\leq n$. 
This implies in particular that $u^\star$ is sub-optimal for $V$, and thus by the definition \eqref{fully-discrete} for each $1\leq j\leq n$ it holds
\begin{equation*}
V_j \leq  \Delta t\,g(x_j, u^\star) + (1-\Delta t \lambda) S^\sigma[V] (z^\star).
\end{equation*}
Combining this inequality with the definition \eqref{eq:semi_discrete} of $v^{\Delta t}(x_j)$, we have 
\begin{align*}
V_j - v^{\Delta t}(x_j)
&\leq  \Delta t\,g(x_j, u^\star) + (1-\Delta t \lambda) S^\sigma[V] (z^\star)
-\Delta t\,g(x_j,u^\star) - (1-\Delta t \lambda) v^{\Delta t} (z^\star)\\
&=(1-\Delta t \lambda)\left(S^\sigma[V] (z^\star)-v^{\Delta t} (z^\star)\right).
\end{align*}
Now we add and subtract the interpolation $S^\sigma[v^{\Delta t}]$ of $v^{\Delta t}$ evaluated at $z^\star$, and use the bound \eqref{eq:error_shepard} 
and the fact that $S^\sigma$ is a contraction to obtain
\begin{align*}
V_j - v^{\Delta t}(x_j)
&\leq(1-\Delta t \lambda)\left(S^\sigma[V]  (z^\star)-S^\sigma[v^{\Delta t}]  (z^\star) + S^\sigma[v^{\Delta t}]  (z^\star)-v^{\Delta t} (z^\star)\right)\\
&\leq (1-\Delta t \lambda)\left(\|V - v^{\Delta t}\|_{\infty, \tilde \Omega} + C L_v h_{X,\tilde \Omega}\right).
\end{align*}
We can now repeat the same reasoning using instead the control $u^*$ that is optimal for $V$, and in this way we can obtain a bound on the opposite quantity 
$v^{\Delta t}(x_j) - V_j$. These two inequalities, when combined, give the desired bound:
\begin{equation*}
\|V - v^{\Delta t}\|_{\infty, \tilde \Omega} 
\leq \frac{C L_v}{\lambda} \frac{h_{X,\tilde \Omega}}{\Delta t}.
\end{equation*}
\end{proof}

By triangular inequality from \eqref{eq:time_error_refined} and \eqref{eq:thm}, one can obtain
$$\|v - V\|_{\infty,\tilde\Omega}\leq C'\Delta t + \frac{C L_v}{\lambda} \frac{h_{X,\tilde \Omega}}{\Delta t}.$$

\subsection*{Algorithm}
The algorithm is summarized in Algorithm \ref{algo1}:
 \begin{algorithm}[H]
 	\caption{Value Iteration with shape parameter selection}
 	\label{algo1}
 	\begin{algorithmic}[1]
 \STATE INPUT: $\Omega,\Delta t, U, \mathcal{P}$ parameter range, tolerance, RBF and system dynamics $f$, flag
 \STATE	initialization\;
 \STATE	Generate Mesh
 \IF{flag == Comparison}
 	\FOR{$\theta \in P$}
 	\STATE Compute $V_{\theta}$\; 
 		\STATE		$R(\theta) = || V_{\theta} - W(V_{\theta})||_\infty$
 	\ENDFOR
 \STATE	$\bar{\theta} = \underset{\theta \in P}{\arg \min } \text{ }  R(\theta)$\;
 	\ELSE
 \STATE $R_\theta = 1, \theta = \theta_0$, tol, $\varepsilon$
 \WHILE{$\|R_\theta\|>tol$}
 \STATE Compute $V_{\theta}$ and $V_{\theta+\varepsilon}$
 \STATE	Evaluate $R(V_\theta)$ and $R(V_{\theta+\varepsilon})$
  \STATE   $R_\theta =\dfrac{R(V_{\theta+\varepsilon})- R(V_{\theta})}{\varepsilon}$
 \STATE $\theta = \theta - R_\theta$
 \STATE $\theta=\max(\min(\theta_{\max},\theta),\theta_{\min})$ \qquad (projection into $\mathcal{P}$) 
\ENDWHILE
\STATE $\bar{\theta} = \theta, V_{\bar{\theta}} = V_\theta$
 	\ENDIF
 	\STATE OUTPUT: $\{\bar{\theta}, V_{\bar{\theta}}\}$ 
 \end{algorithmic}
 \end{algorithm}


\section{Numerical experiments}
\label{Section5}
\setcounter{section}{5}
\setcounter{equation}{0}
\setcounter{theorem}{0}
\setcounter{algorithm}{0}
\setcounter{example}{0}
\renewcommand{\theequation}{\arabic{section}.\arabic{equation}}

In this section we present three numerical tests to illustrate the proposed algorithm. The first test is a two dimensional minimum time problem with well-known analytical solution. In this test we analyse results using regular and scattered grids. The second and third test deal with a advection equation and a nonlinear heat equation, respectively. We discretize in space both PDEs using finite differences. The dimension of the semi-discrete problem will be $10201$ for the advection equation and $961$ for the parabolic problem. 
For each example we provide examples of feedback and optimal controls reconstructed for different initial conditions that may not belong to the grid. In the parabolic case we present the effectiveness of feedback control under disturbances of the system. 

In every experiment we define an admissible interval $\mathcal{P}$ to solve the minimization problem by comparison in Algorithm \ref{algo1} as follows: we start with a large interval $\mathcal{P}_1$ and we coarsely discretize it. Then, we run Algorithm \ref{algo1} and obtain $\bar\theta_1$. Later, we choose a set $\mathcal{P}_2 \subset \mathcal{P}_1$ such that $\bar\theta_1\in\mathcal{P}_2$. Using $\mathcal{P}_2$ and a finer refinement, the Algorithm provides $\bar\theta_2$. We iterate this procedure, and finally we set $\mathcal{P} = \mathcal{P}_n$. 

In our test we use the Wendland RBF defined in \eqref{eq:wendland_rbf}. The numerical simulations reported in this paper are performed on a laptop with one CPU Intel Core i7-2.2 GHz and 16GB RAM. The codes are written in MATLAB.

\subsection*{Test 1: Eikonal equation in 2D}

We consider a two dimensional minimum time problem in $\Omega = [-1, 1]^2$ with the following dynamics and control space
\begin{equation}  
	f(x,u) = \begin{pmatrix}
		\cos(u) \\
		\sin(u)
	\end{pmatrix},\quad U = [0, 2 \pi]. 
\end{equation}

  The cost functional to be minimized is $\mathcal{J}_x(y, u) = \int_0^{t(x,u)} e^{-\lambda s}ds$ with \begin{equation}  
	t(x,u) \coloneqq
	\begin{cases}
		\inf_{s} \{s \in \mathbb{R}_+ : y_{x}(s,u) \in \mathcal{T}\}  & \text{ if } y_{x}(s,u) \in \mathcal{T} \text{ for some } s\\
		+ \infty & \text{ otherwise},
	\end{cases}
	\label{time_arrival}
\end{equation} being the time of arrival to the target $\mathcal{T}=(0,0)$ for each $x \in [-1, 1]^2$.

The analytical solution $V^{*}(x)$ for this problem is the Kruzkov transform of the distance to the target $\mathcal{T}$ (see e.g. \cite{BCD97}): \begin{equation*}
    V^{*}(x) \coloneqq 
    \begin{cases}
        1 & \text{ if } v^{*}(x)=\infty \\
        1 - \exp(- v^{*}(x)) & \text{ else} 
    \end{cases}
\end{equation*}
with $v^{*}(x) = ||x||_{2}$ for each $x \in [-1, 1]^2$.

We tested Algorithm \ref{algo1} for two different cases. The first one considers  an unstructured grid generated by random points, whereas the second case studies an unstructured grid generated by the problem dynamics as mentioned in Section \ref{Section4}. Due to the randomness of our grid we compute an average error. In the second case we will average over $10$ tests whereas on the third one over $5$. We do not discuss in detail the case with a regular grid since it has been extensively analyzed in \cite{JS15}. However, in Example \ref{case4}, we show optimal trajectories using also the regular grid with linear interpolation and the Shepard's approximant.
The \textit{relative error} reads:
\begin{equation}
\label{rel_error}
    \mathcal{E}(V_{\theta})=\frac{|| V_{\theta} - V^{*}||_{\infty}}{|| V^{*}||_{\infty}}
\end{equation}
where $V_{\theta}$ is the discrete value function obtained with the shape parameter $\sigma = \frac{\theta}{h}$ and $V^{*}$ is the exact solution.
We will denote by $\theta^{*}$ the parameter selected in order to minimize the \textit{relative error} from \eqref{rel_error}: 
$$\theta^{*}\coloneqq \underset{\theta \in \mathcal{P}}{\arg \min } \text{ } \mathcal{E}(V_{\theta})$$  

Clearly, $\mathcal{E}(V_\theta^{*})$ will be a lower bound for $\mathcal{E}(V_\theta)$.
In the following three cases we set $\lambda=1$ and $U$ is discretized with $16$ equidistant controls.

\begin{example}{ Random Unstructured Grid} \label{case2}

{\rm The first test with Eikonal equation is performed using an unstructured grid generated by random points. In order to obtain a grid which densely covers our numerical domain, a set of 40000 randomly distributed points in $[-1,1]^2$ is clustered using the \textit{k-means} algorithm, where $k$ is the number of desired points in the grid. Examples of this type of grid are shown in Figure \ref{unstructured_case2}.

\begin{figure}[htbp]
\label{unstructured_case2}
	\centering 
	\includegraphics[scale = 0.27]{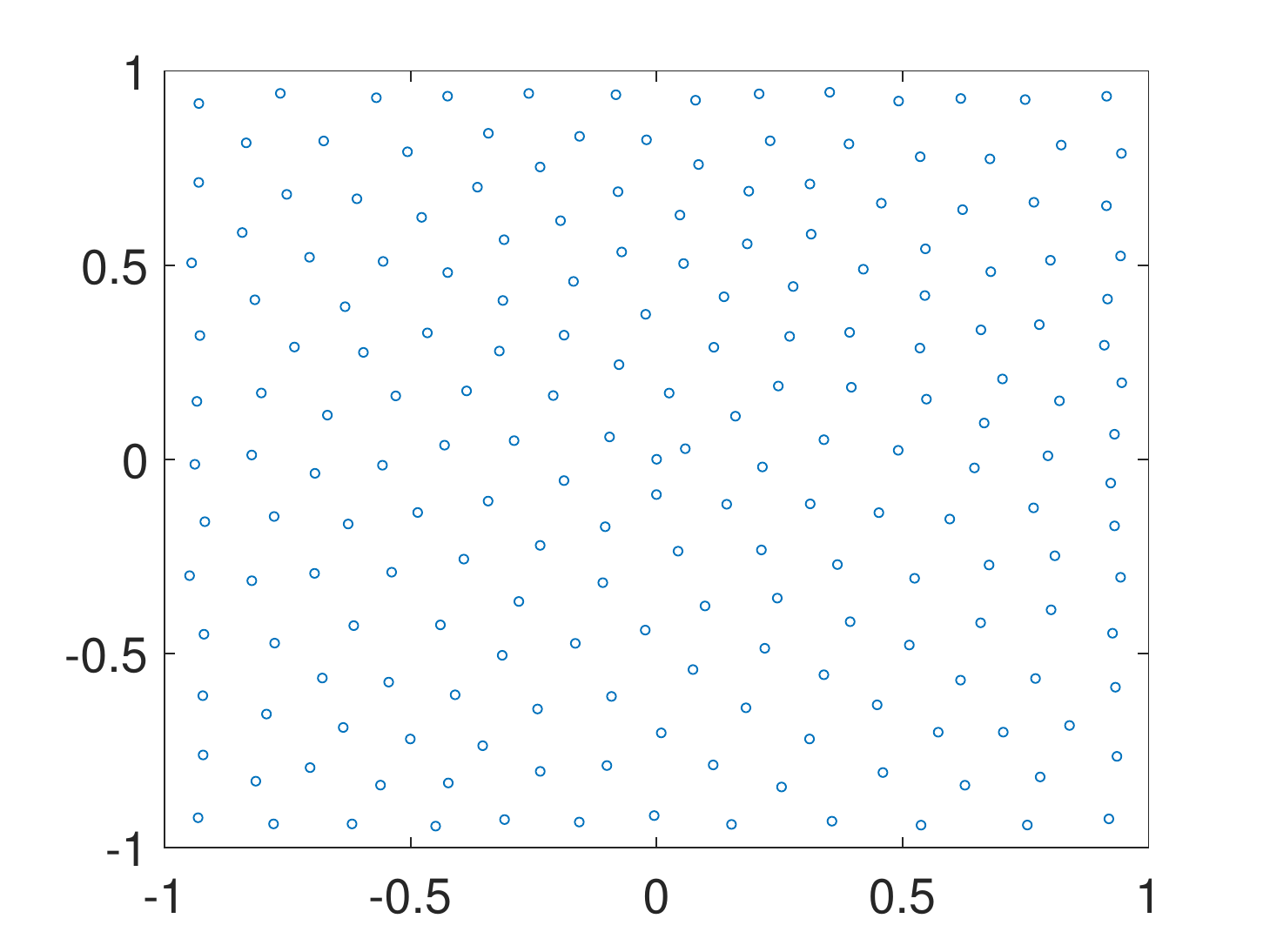}
	\includegraphics[scale = 0.27]{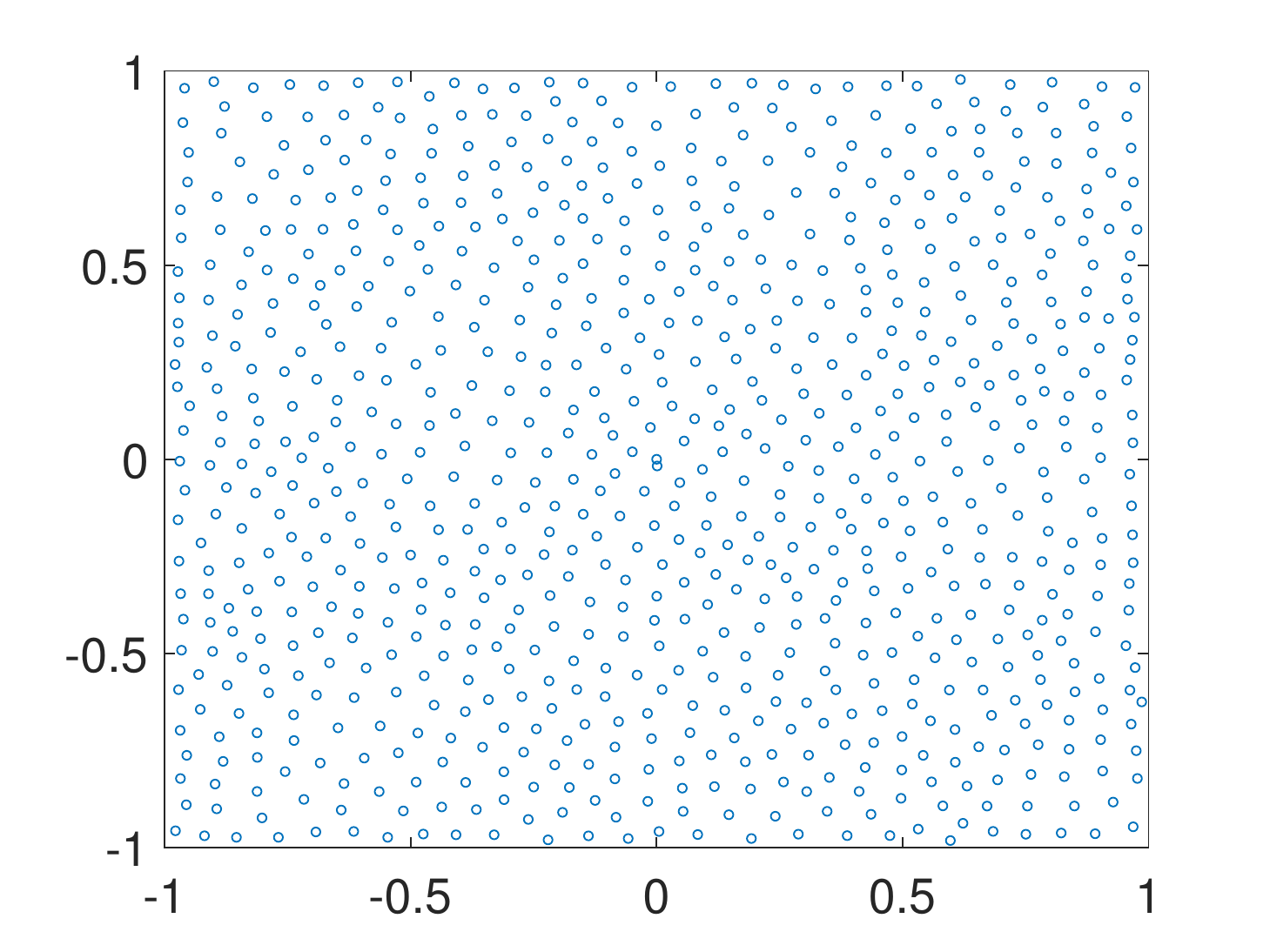}
	\includegraphics[scale = 0.27]{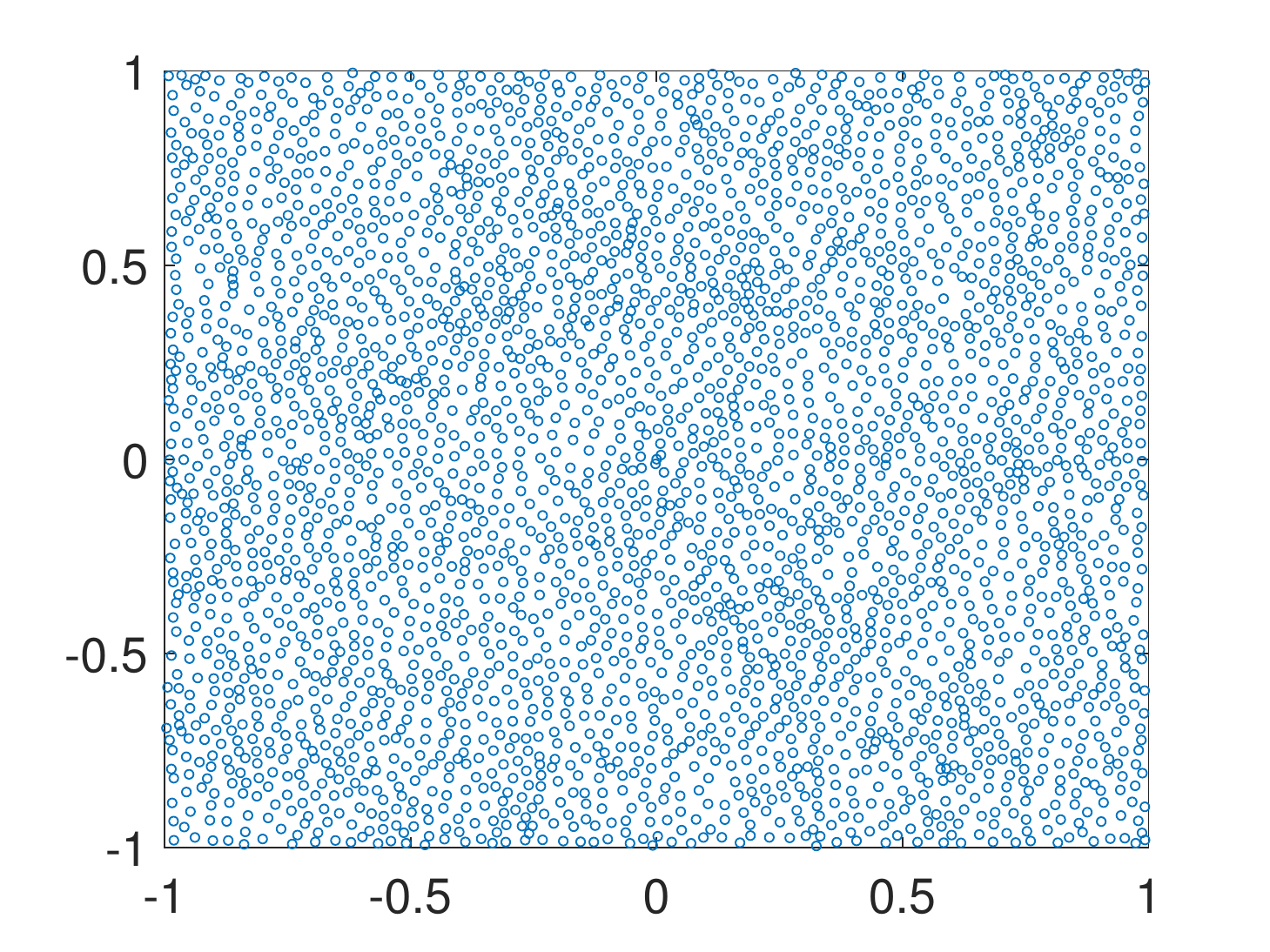}
	\caption{Example \ref{case2}. Examples of random generated grids. Left: $200$ points and fill distance $0.1618$.  Center: $800$ points and fill distance $0.0846$. Right: $3200$ points and fill distance $0.0461$.}
\end{figure}

We found $\mathcal{P} = [1,3]$ a suitable parameter space discretized with $0.1$ as step size. In the HJB equation we set $\Delta t = h$. 
In the left panel of Figure \ref{unstruc1} we see an example of residual $R(\theta)$ when the unstructured grid is formed by $3200$ nodes. The residual is minimized between $1.5$ and $2$. The average value after $10$ tests is $\bar{\theta}=1.76$ as can be seen in the fourth column of Table \ref{table2_eikonal}. In the middle panel of Figure \ref{unstruc1} we see a plot of $\mathcal{E}(V_{\theta})$ for different number of points in the grid; the values $\theta^{*}$ are in the fifth column of Table \ref{table2_eikonal}. The right panel of Figure $\ref{unstruc1}$ shows the behavior of $\mathcal{E}(V_{\bar{\theta}})$ and $\mathcal{E}(V_{\theta^{*}})$ decreasing the fill distance $h$. The error decays as $h$ does.

\begin{figure}[htbp]
\label{unstruc1}
	\centering 
	\includegraphics[scale = 0.27]{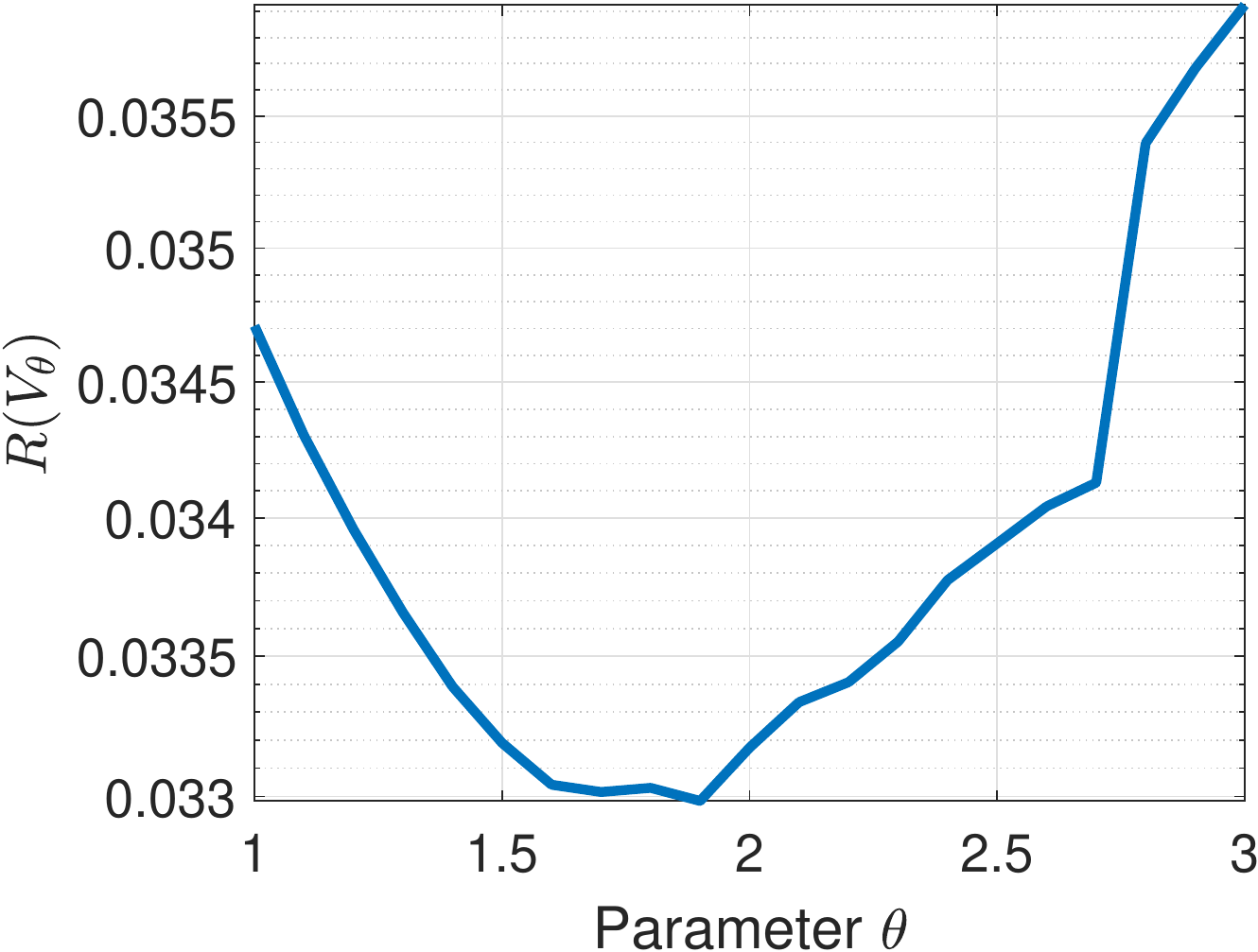}
	\includegraphics[scale = 0.27]{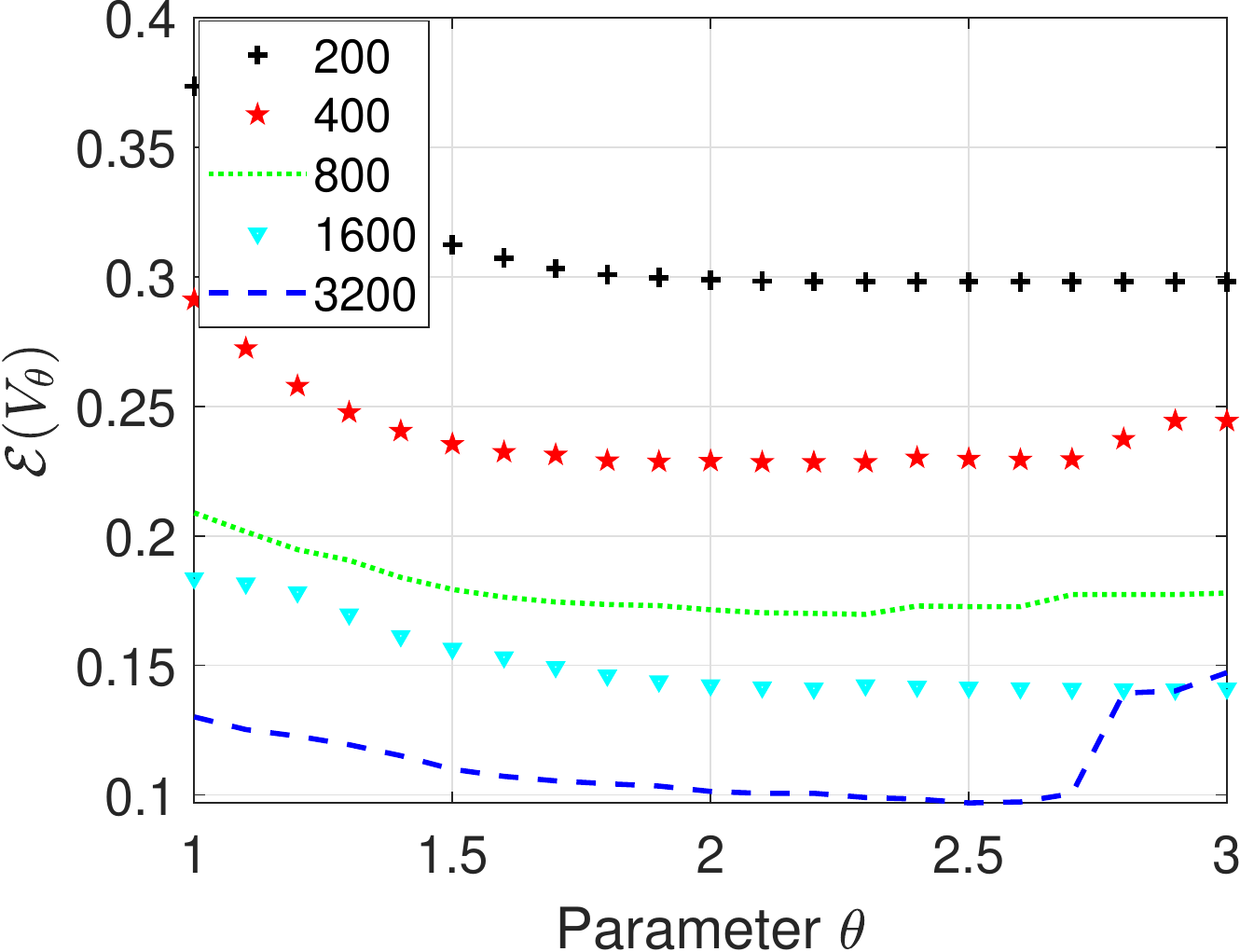}
	\includegraphics[scale = 0.27]{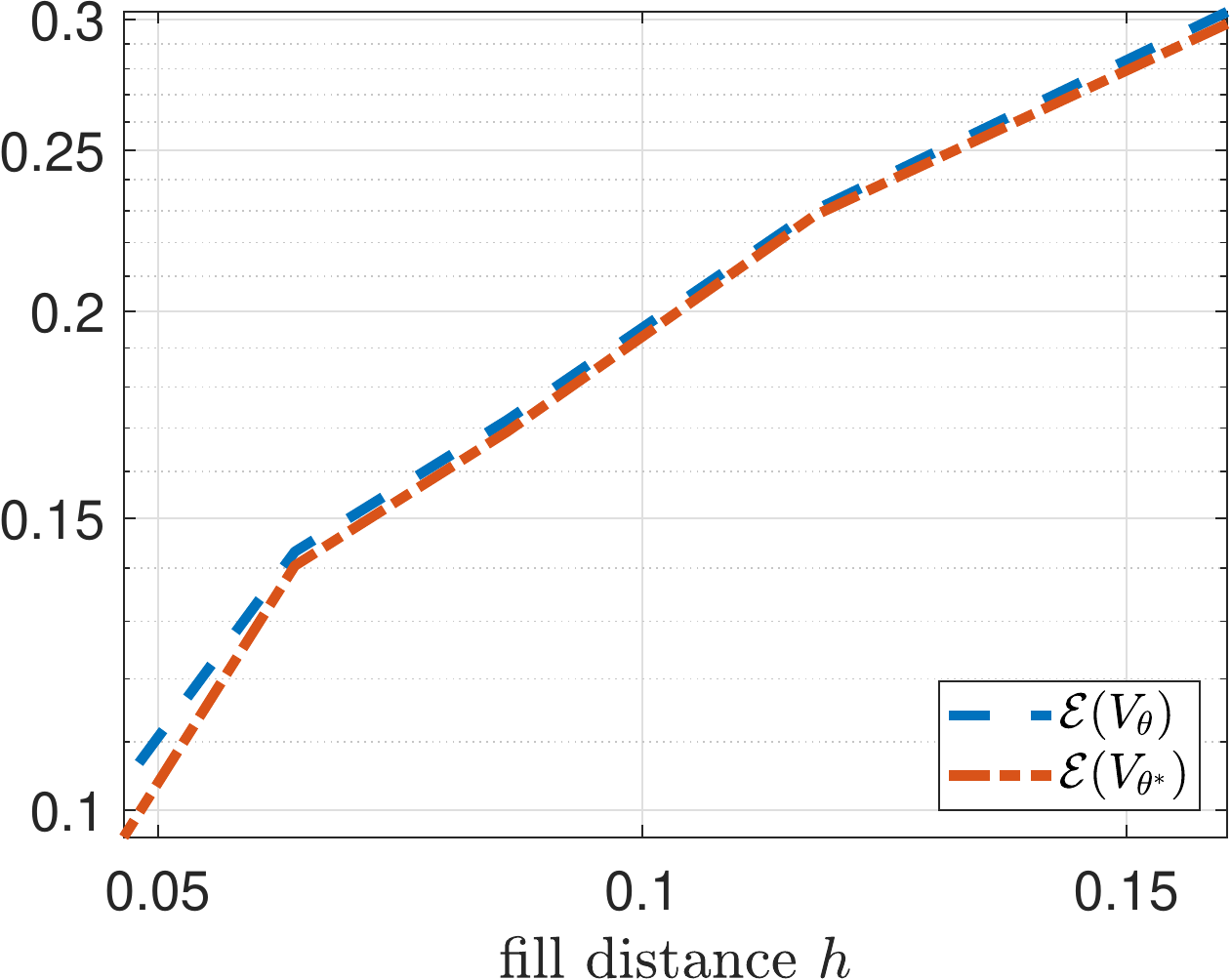}
	\caption{Example \ref{case2}. Left: Average residual for $3200$ points. Middle: $\mathcal{E}(V_{\theta})$. Right: $\mathcal{E}(V_{\bar{\theta}})$ and $\mathcal{E}(V_{\theta^{*}})$ variation with $h$.}
\end{figure}

Table \ref{table2_eikonal} shows the quality of our results. The first column presents the fill distance $h$ and the relative number of points is shown in the second column. The third column presents the CPU time (in seconds). The fourth column presents the values $\bar{\theta}$, outputs of Algorithm \ref{algo1} using the comparison method in the minimization procedure. The fifth column presents the values of $\theta^{*}$. The sixth and seventh columns present the values of $\mathcal{E}(V_{\bar{\theta}})$ and $\mathcal{E}(V_{\theta^{*}})$. 

We see the average decay of the fill distance $h$ when increasing the number of nodes. Accordingly, the average CPU time increases. The parameters $\bar{\theta}$ and $\theta^{*}$ assume values close to each other,  with $\theta^{*}>\bar{\theta}$. The errors $\mathcal{E}(V_{\bar{\theta}})$ and $\mathcal{E}(V_{\theta^{*}})$ reduce according to $h$.


 \begin{table}[htbp]
 \centering
 \label{table2_eikonal}
 	\begin{tabular}{ccccccc}
 		\hline
 		$h$	& Points & CPU time & $\bar{\theta}$ & $\theta^{*}$ &$\mathcal{E}(V_{\bar{\theta}})$ & $\mathcal{E}(V_{\theta^{*}})$ \\
\hline
 		$0.1603$	& $200$ & $9.8$ &$1.91$ & $2.16$ &$0.3031 $ & $0.2981$ \\
 		$0.1177$	& $400$ & $14.6$ & $1.86$ & $2.06$ &$0.23$ & $0.2284$ \\
 		$0.0861$  & $800$ & $31.8$ &$1.92$ & $2.21$ &$0.172$ & $0.1697$ \\
 		$0.0641$ & $1600$ & $115$ & $2.04$ & $2.42$ &$0.1432 $ & $0.1407$ \\
 		$0.0464$ & $3200$ & $504$ &  $1.76$ & $2.06$ &$0.1037$ & $0.0969$ \\
\hline
 	\end{tabular}  \\
 	\caption{Example \ref{case2}. Numerical Results with random unstructured grid.}
 \end{table}

Figure \ref{scattered_value} presents in the left panel the exact solution evaluated on the scattered grid with $3200$ points and $h=0.0464$. The middle picture is the solution obtained by value iteration algorithm with Shepard approximation and the last picture is the absolute error between the two solutions. The error has an erratic behavior always below $10^{-1}$.

\begin{figure}[htbp]
\label{scattered_value}
	\centering 
	\includegraphics[scale = 0.27]{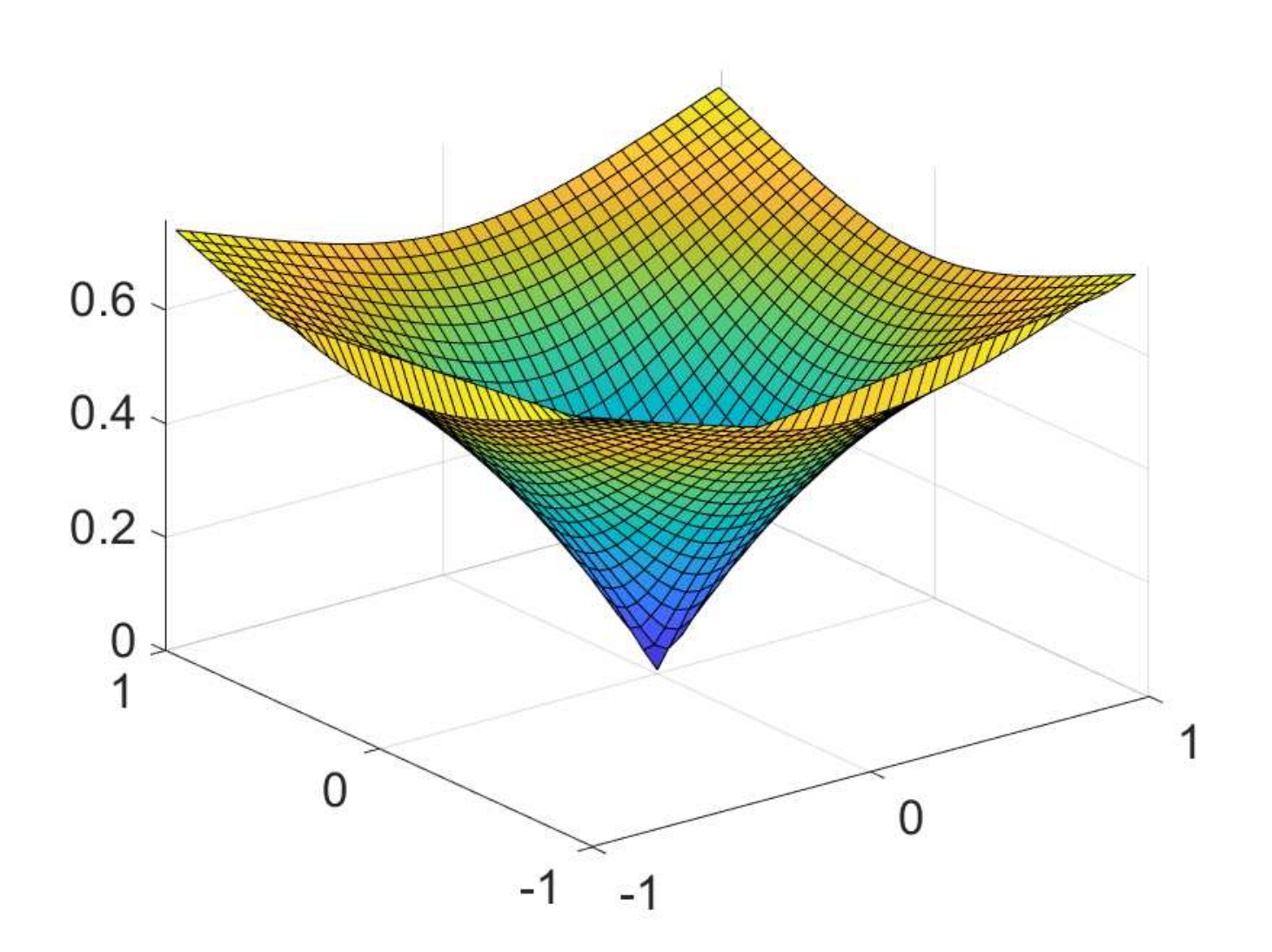}
	\includegraphics[scale = 0.27]{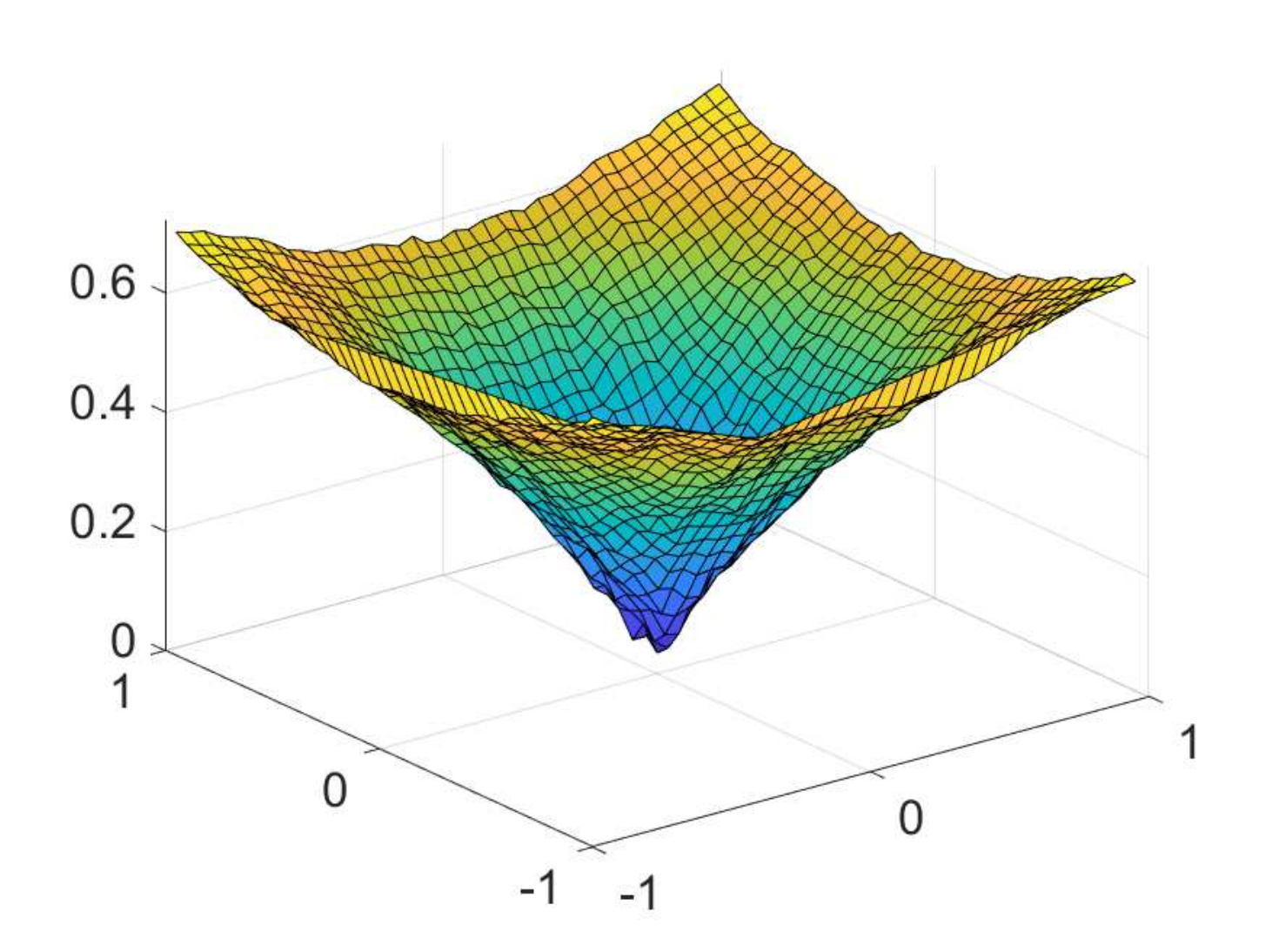}
	\includegraphics[scale = 0.27]{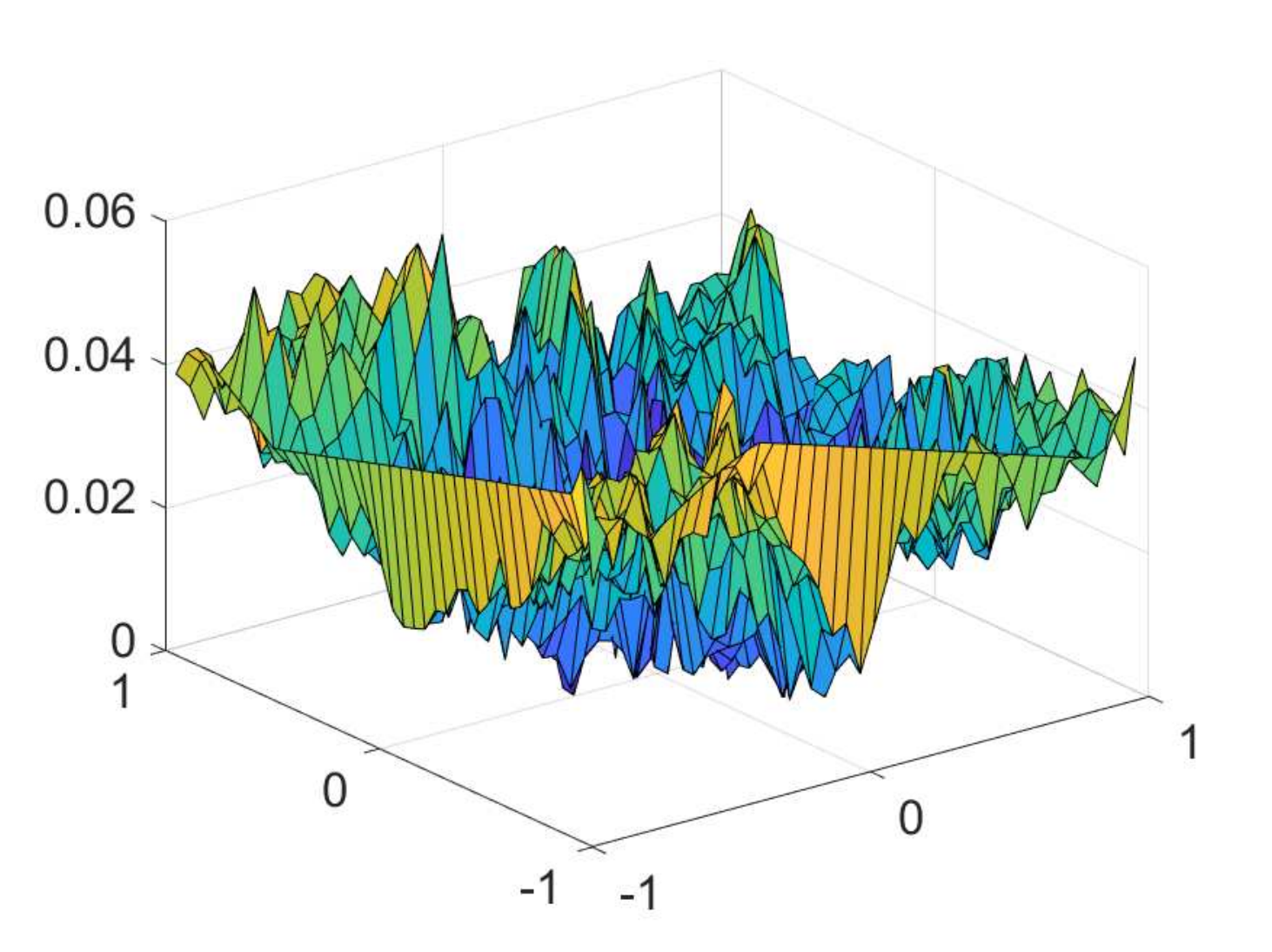}
	\caption{Example \ref{case2}. Value functions generated in a Random  Unstructured Grid formed by $3200$ points. Left: Exact solution. Center: Solution 
obtained by VI and Shepard approximation. Right: Absolute error of exact solution and value function obtained by Shepard approximation Value Iteration.}
\end{figure}

}
\end{example}

\begin{example}{Grid driven by the dynamics}\label{case3}

\noindent
{\rm In this example we test our novel grid proposed in Section \ref{Section4}. We set $\Delta t = h$ in the HJB equation. To generate the trajectories which will be our grid points, we set $(\overline \Delta t, \bar L, \bar M)=\{(0.1,4,16),(0.05,8,16),(0.025,16,16) \}$ in \eqref{eq:mesh}. Figure \ref{unstructured_case3} shows some examples of meshes generated in this case, with randomly selected initial conditions. These meshes follow the pattern of problem dynamics.

\begin{figure}[htbp]
\label{unstructured_case3}
	\centering 
	\includegraphics[scale = 0.27]{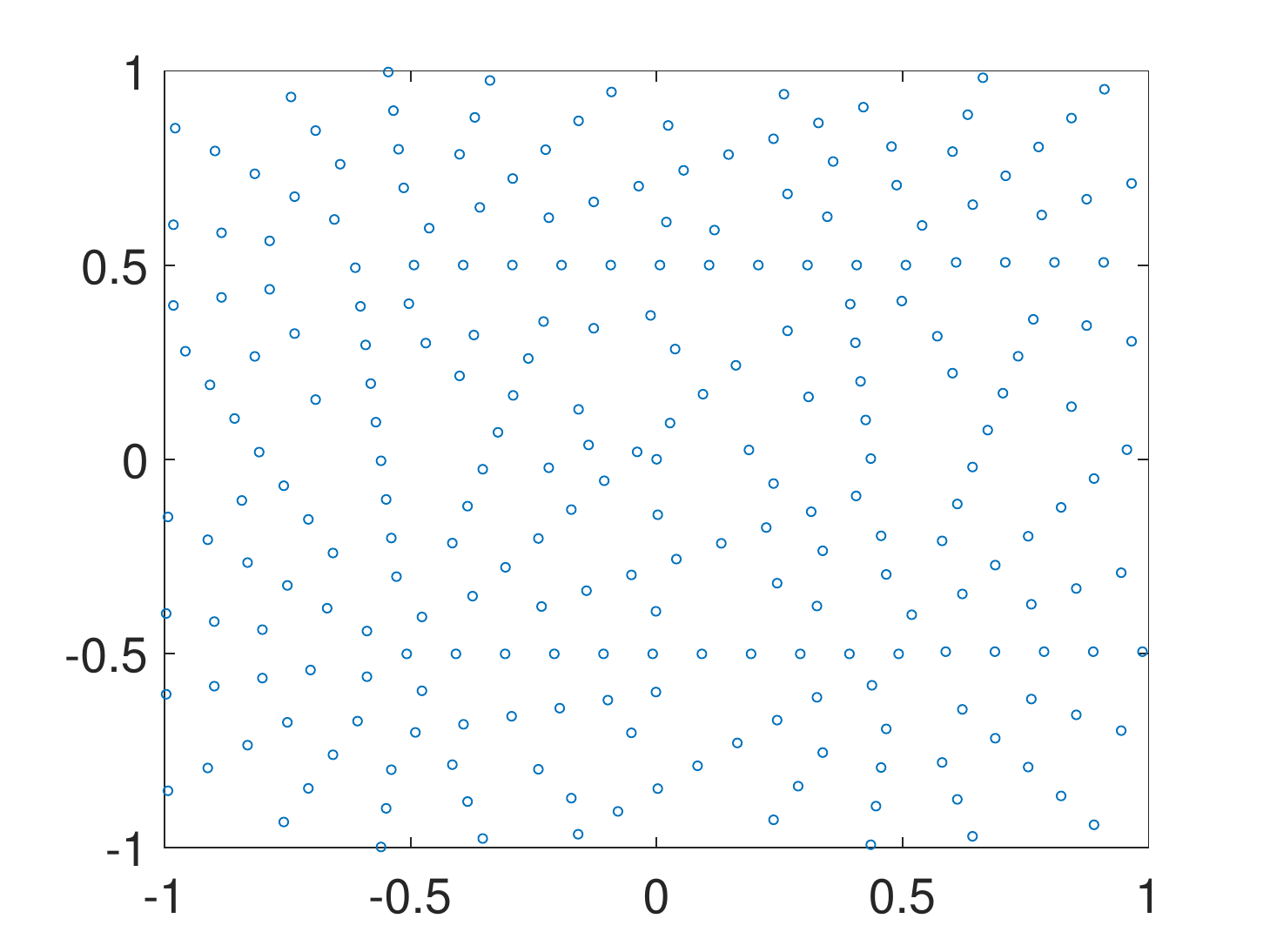}
	\includegraphics[scale = 0.27]{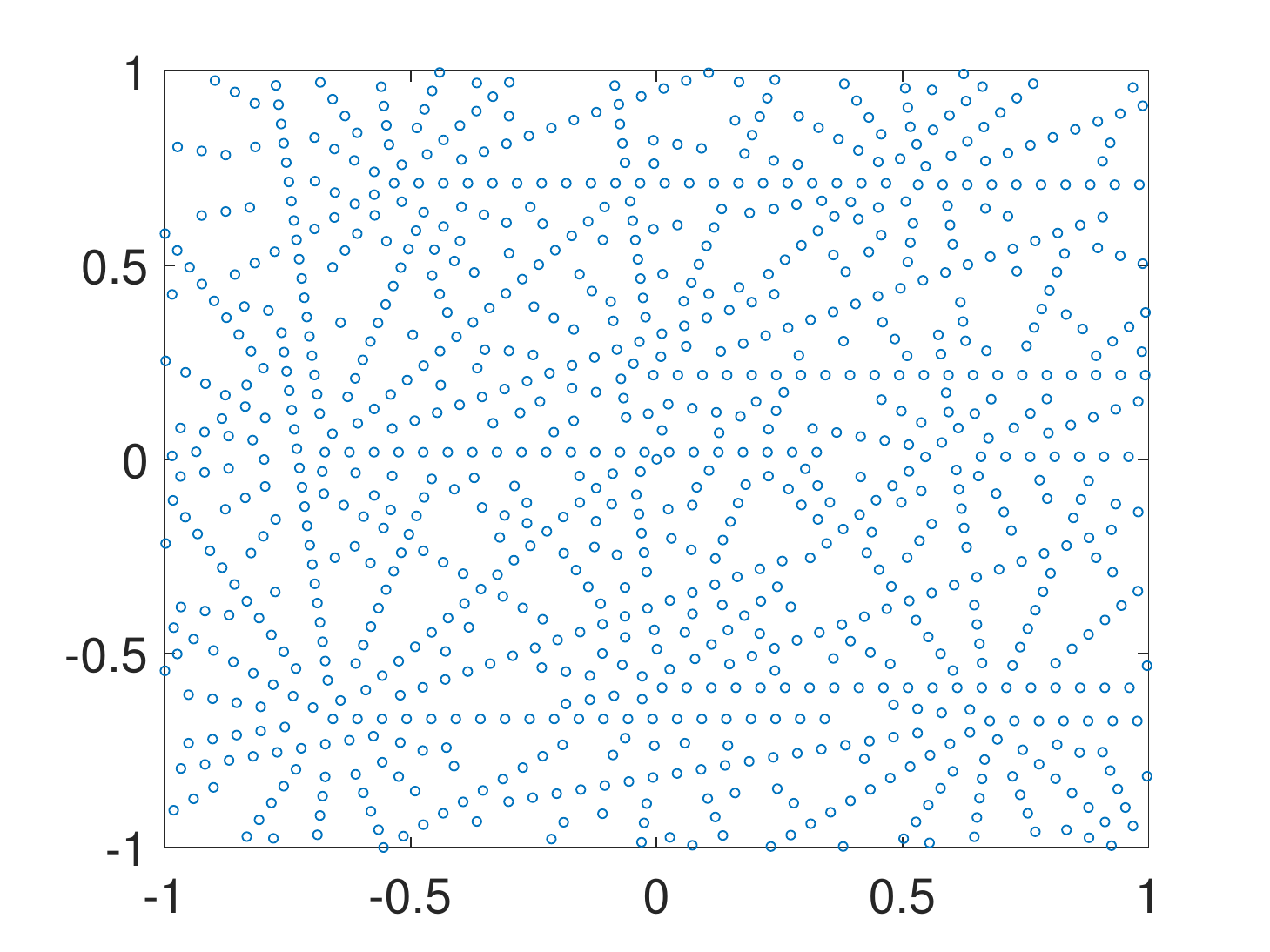}
	\includegraphics[scale = 0.27]{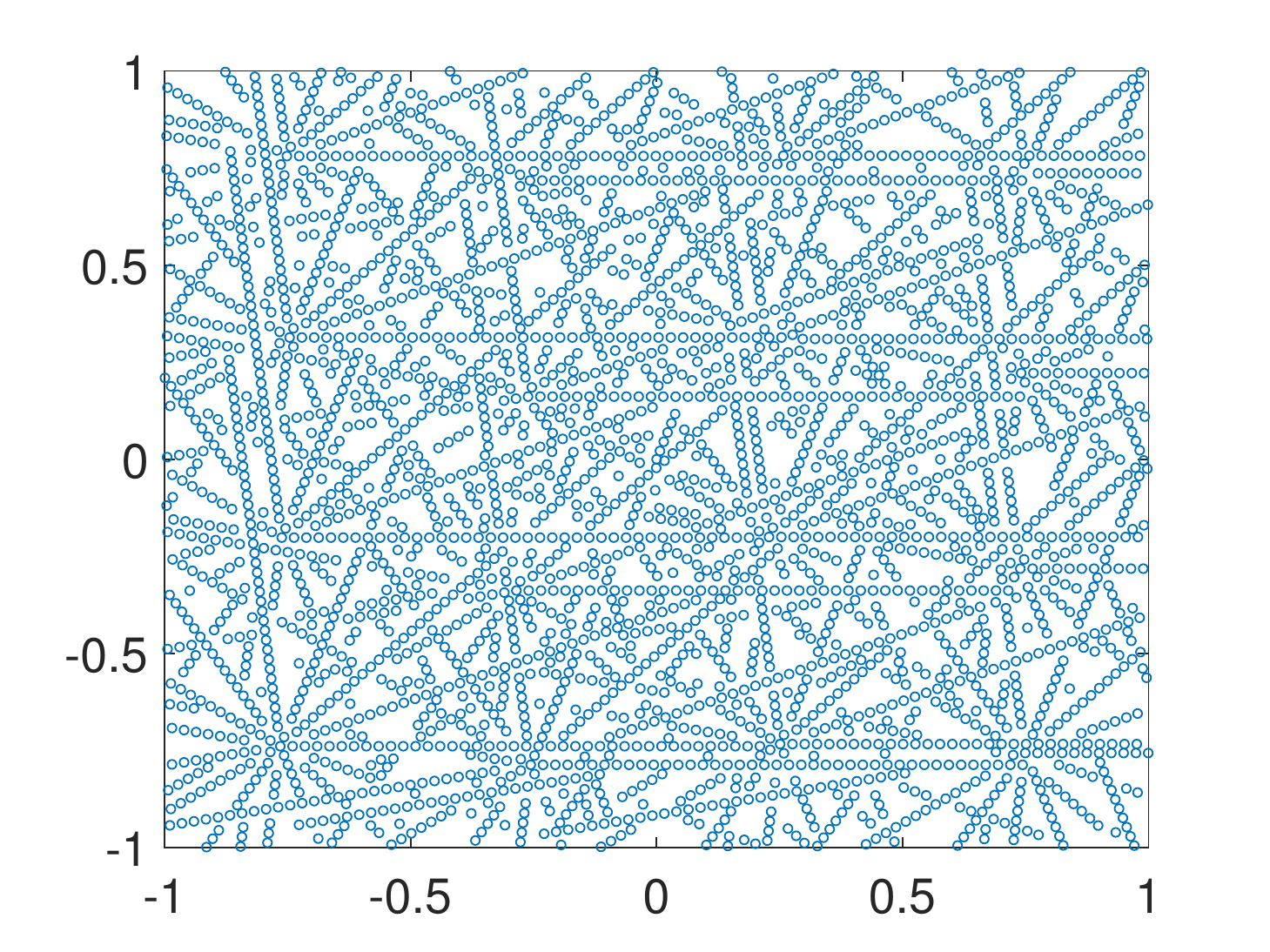}
	\caption{Example \ref{case3}. Meshes generated by the dynamics. Left: $246$ points and fill distance $0.1436$. Middle: $909$ points and fill distance $0.0882$. Right: $3457$ points and fill distance $0.0439$.}
\end{figure}

We set, again, $\mathcal{P}=[1,3]$. The behavior of $R(\theta)$ is shown in Figure \ref{unstruc2} when the grid has $3469$ points on average. The minimum value is achieved for $\bar{\theta} = 1.7$. The middle panel of Figure \ref{unstruc2} shows a plot of $\mathcal{E}(V_{\theta})$ for a different number of points. The right picture shows the error behavior in $\mathcal{E}(V_{\bar{\theta}})$ and $\mathcal{E}(V_{\theta^{*}})$ with the reduction of $h$. It decreases according to Theorem \ref{theo:us}. We stress that all of these quantities are computed averaging $5$ simulations.

\begin{figure}[htbp]
\label{unstruc2}
	\centering 
	\includegraphics[scale = 0.27]{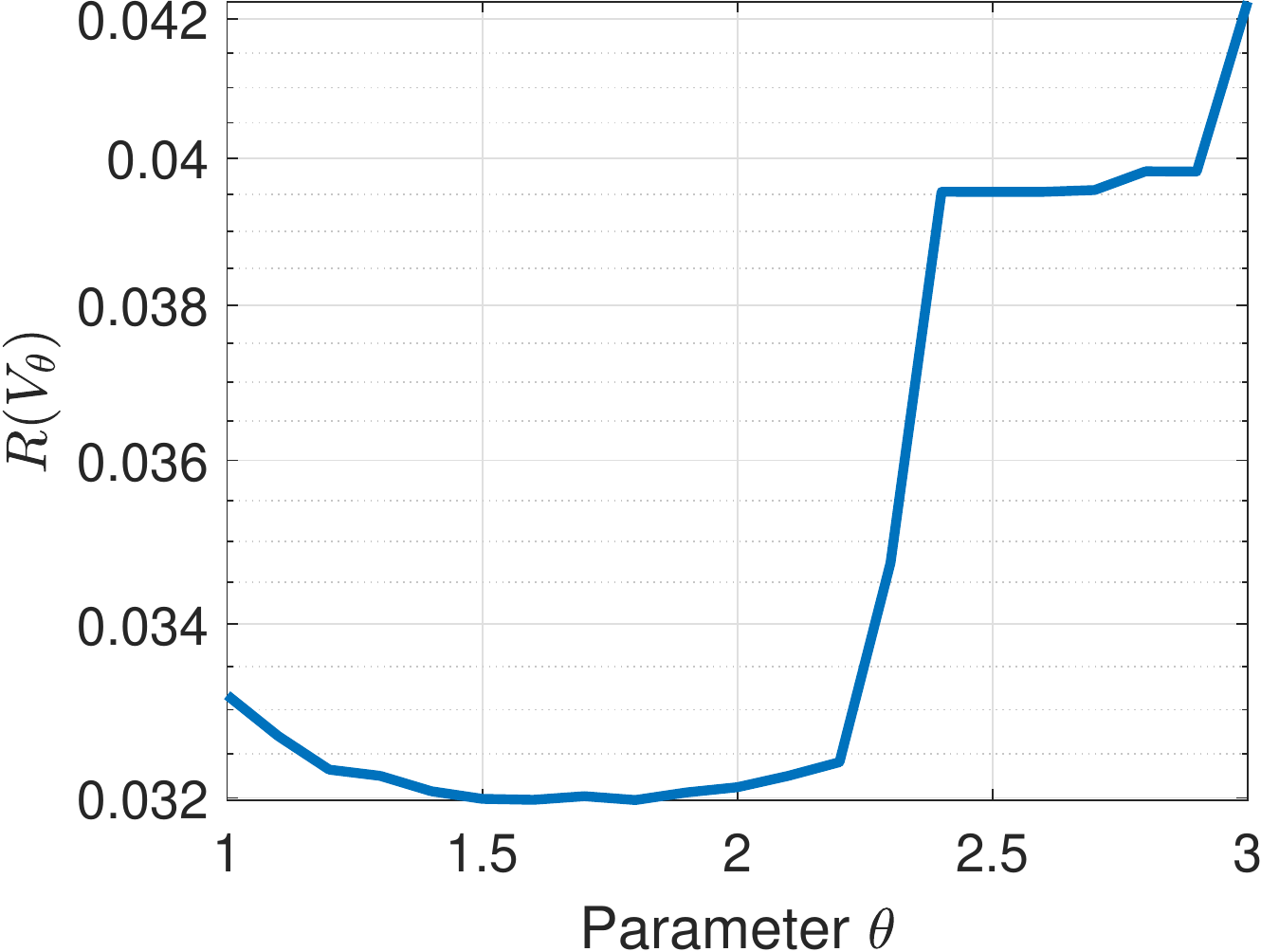}
	\includegraphics[scale = 0.27]{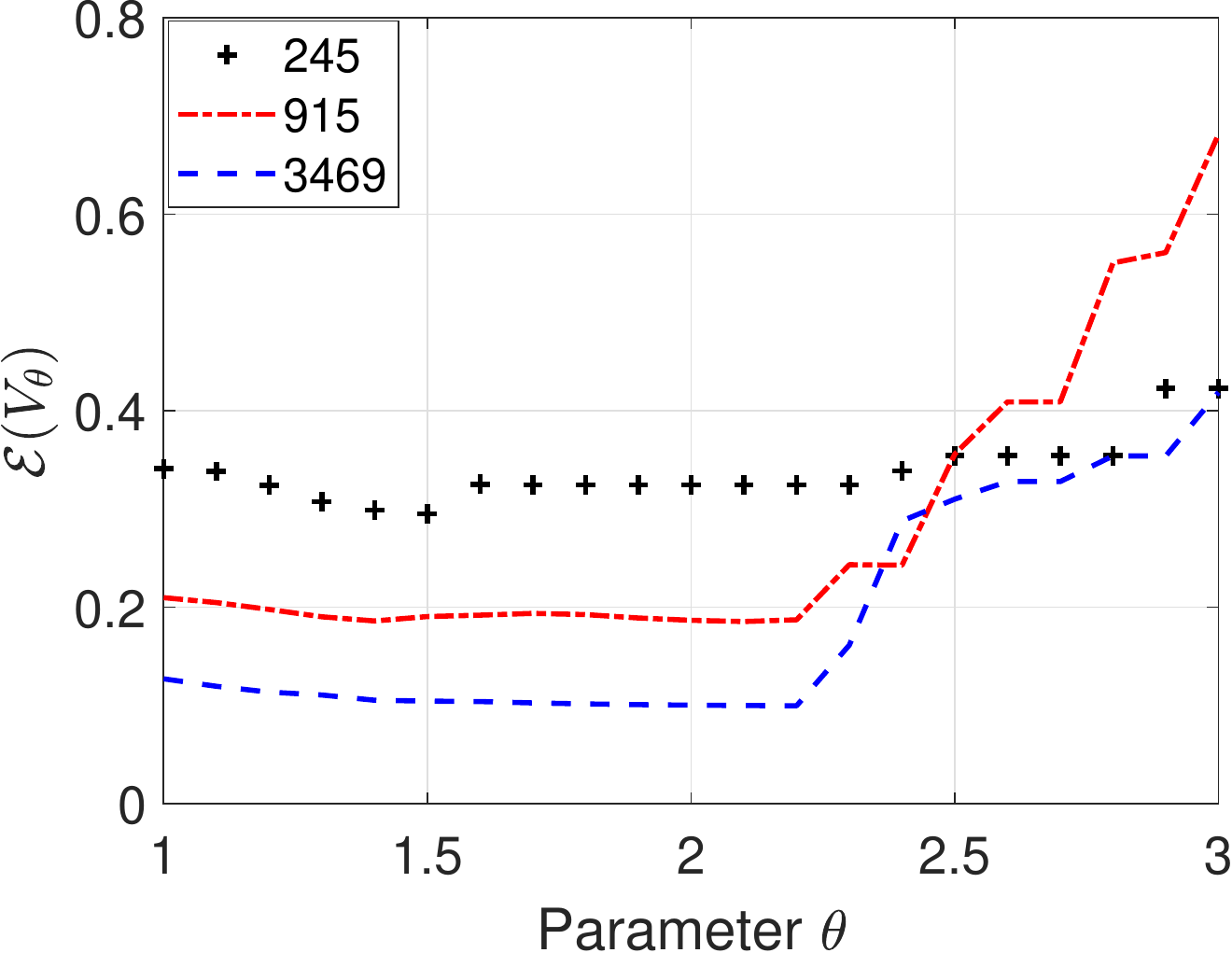}
	\includegraphics[scale = 0.27]{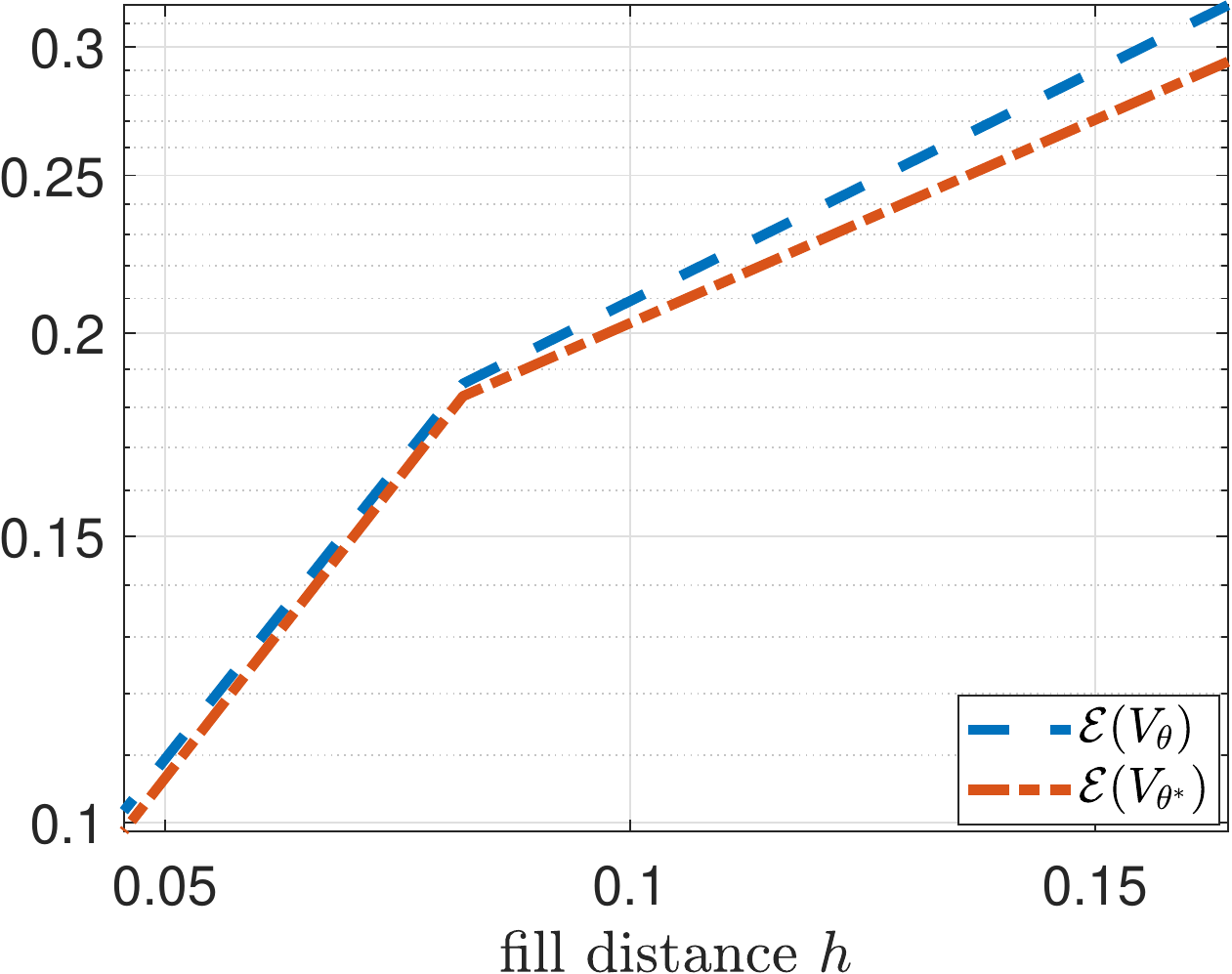}
	\caption{Example \ref{case3}. Left: Average residual to case with $3469$ points. Middle: $\mathcal{E}(V_{\theta})$. Right: $\mathcal{E}(V_{\bar{\theta}})$ and $\mathcal{E}(V_{\theta^{*}})$ variation with $h$.}
\end{figure}

Table \ref{table3_eikonal} summarizes the results of Example \ref{case3} as discussed in Example \ref{case2}. All the considerations are very similar to the previous example but now we have a different grid which will help us to deal with higher dimensional problems as it will be presented in the next sections. We stress that the values of the error indicator in the sixth and seventh column of Table \ref{table3_eikonal} are very close to each other. Once again, this is very interesting since we are using an a-posteriori criteria for the computation of the shape parameter.


\begin{table}[htbp]
\label{table3_eikonal}
\centering
	\begin{tabular}{lllllllll}
		\hline
		$h$	& Points & CPU time &$\theta$ & $\theta^{*}$   &$\mathcal{E}(V_{\bar{\theta}})$ & $\mathcal{E}(V_{\theta^{*}})$ \\
\hline
		$0.1642$	& $245$ & $8.5$ & $1.58$ & $1.82$  &$0.3182$ & $0.2949$ \\
		$0.0820$	& $915$ &$55.6$ &  $1.66$ & $1.76$ &$0.1861$ &$0.1855$\\ 
		$0.0455$	& $3469$ &  $654$ &  $1.7$ & $1.82$ &$0.1016$ &$0.0997$\\
\hline
	\end{tabular}  
	\caption{Example \ref{case3}. Numerical results with a grid driven by the dynamics.}
\end{table}

Finally, Table \ref{gradient_equidistant} presents the results of Algorithm \ref{algo1} using a gradient descent method with $\varepsilon = 10^{-6}$ in step $11$ of Algorithm \ref{algo1}. If we compare the results with Table \ref{table3_eikonal} we can see that this method is computationally slower and the accuracy has the same of order of the comparison method. Thus, we will only show the performance of our method where the minimization is computed by comparison. We do not provide results with smaller $h$ in Table \ref{gradient_equidistant} because it is clear that it will be way slower than the comparison method.

\begin{table}[htbp]
\label{gradient_equidistant}
    \centering
    \begin{tabular}{ccccc}
    \hline
      h&   Points & CPU time & $\bar{\theta}$ & $\mathcal{E}(V_{\bar{\theta}})$\\ \hline
    $0.1364$ &   $248$ & $9.7$& $1.62$ & $0.278$\\
    $0.0865$    & $924$ & $152$& $1.72$ & $0.1859$\\
        \hline
    \end{tabular}
    \caption{Example \ref{case3}. Results using gradient method.}
\end{table}}
\end{example}

\begin{example}{\it On the feedback reconstruction.}\label{case4}

{\rm 
\noindent
The approximated value functions computed by means of Algorithm \ref{algo1} with different grids allows us to obtain the optimal trajectories and optimal controls for any initial conditions. In Figure \ref{feedback_eikonal}, we present an example of optimal controls and trajectories computed for $x=(0.7, -0.7)$  using value functions obtained in Example \ref{case2} and Example \ref{case3}. For completeness, we also show the results considering the value function obtained by traditional value iteration algorithm using linear interpolation and Shepard method on a regular grid. The latter comes from \cite{JS15}.

\begin{figure}[htbp]
\label{feedback_eikonal}
	\centering 

	\includegraphics[scale = 0.4]{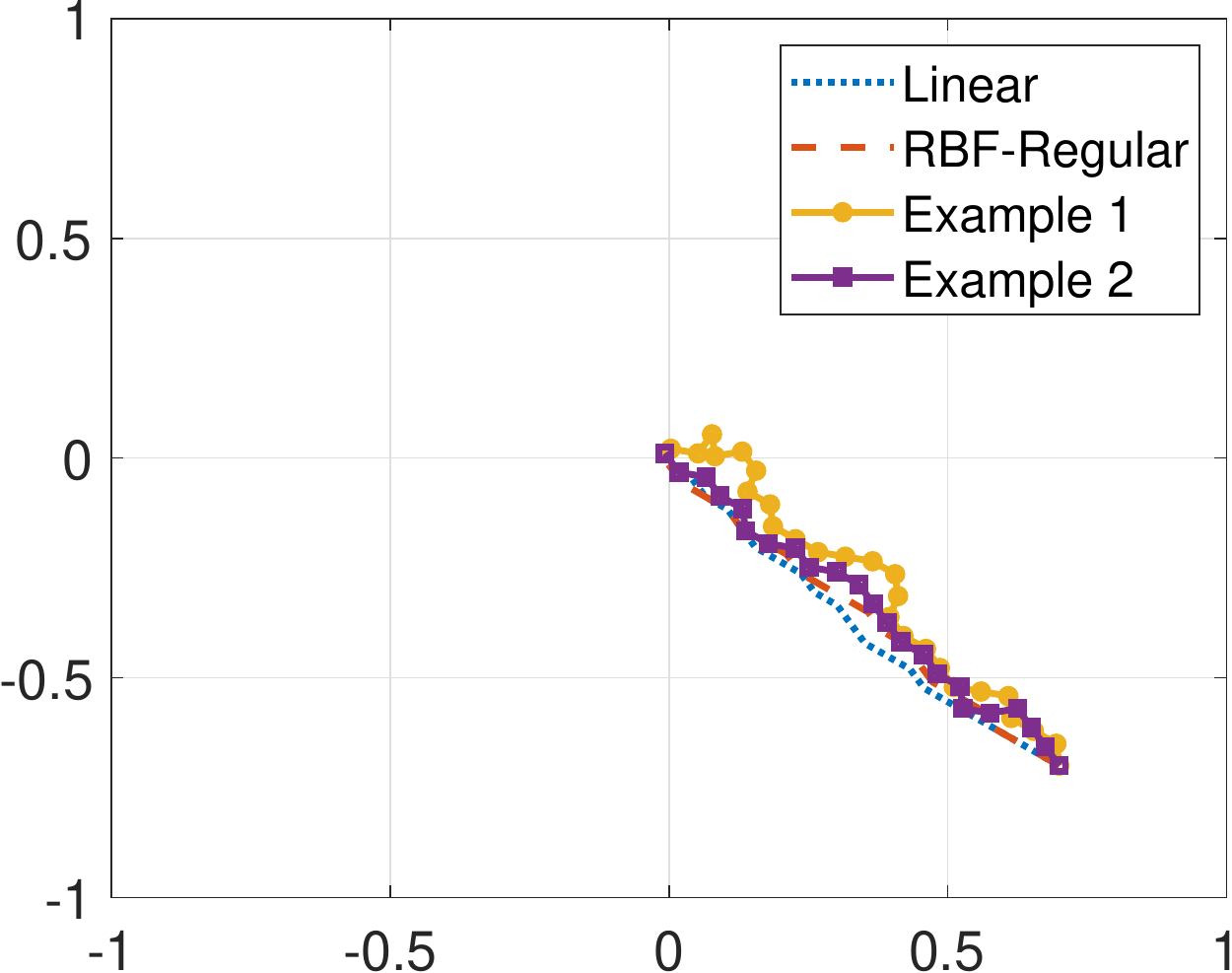}
	\includegraphics[scale = 0.4]{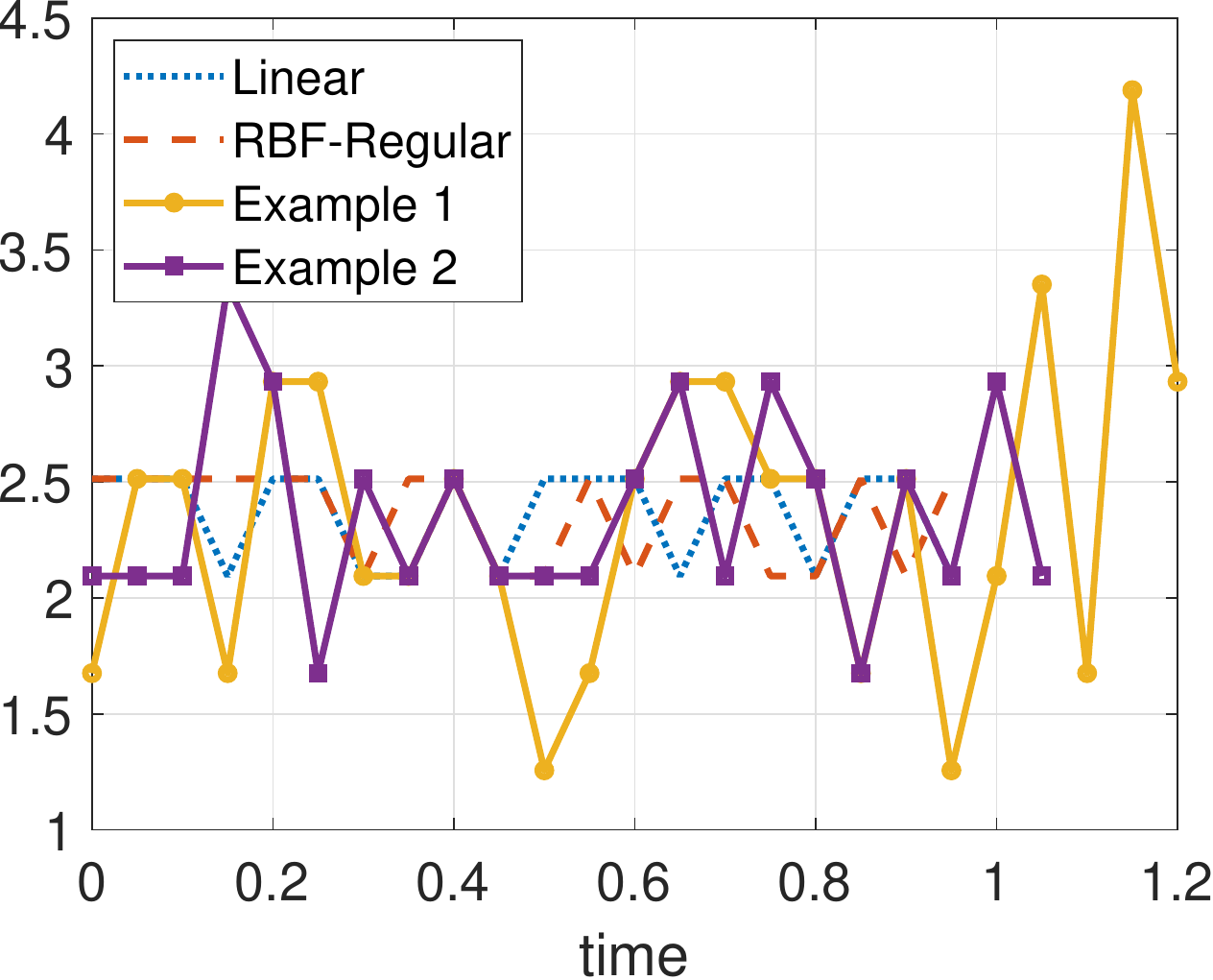}
	\caption{Example \ref{case4}. Optimal trajectories (left) and optimal controls (right) for   $x=(0.7, -0.7)$.}
\end{figure}

All trajectories reach the target $\mathcal{T}$ but with different costs. The value of the cost functional, for a given initial condition, is equal when dealing with a structured grid with linear interpolation and Shepard's approximation. The value of cost functional with value function from Example \ref{case3} is always smaller or equal than the value of cost functional from Example \ref{case2}, for both initial conditions.

In Table \ref{tab:feed} we provide the evaluation of che cost functional for different initial conditions and different methods. It is interesting to see that linear and RBF interpolation coincide on a equidistributed grid and that that the use of RBF with a grid driven by the dynamics lead to lower cost functional values with respect to the a random mesh.
\begin{table}[htbp]
\label{tab:feed}
\centering
	\begin{tabular}{ccccc}
		\hline
$x$	&	Linear	& \textit{RBF-Regular} & \textit{Example 1}  & \textit{Example 2}  \\ \hline
$(-0.7, -0.7)$	&	$0.6664$	& $0.6664$ & $0.7315$ & $0.7006$ \\
$(0.7, 0.7)$ &		$0.6664$	& $0.6664$ & $0.7847$ & $0.6839$  \\
$(-0.7, 0.7)$ &		$0.6664$	& $0.6664$ & $0.7458$ & $0.6839$  \\
$(0.7, -0.7)$ &		$0.6664$	& $0.6664$ & $0.7458$ & $0.7006$  \\
		\hline
	\end{tabular}
	\caption{Example \ref{case4}. Evaluation of the cost functional for different methods and initial conditions $x$.}
\end{table}
}
\end{example}





\subsection*{Test 2: Bilinear advection Equation}

	The second test deals with the control of a two-dimensional advection equation with constant velocity $v\in\R$:
\begin{equation}
\label{transport}
 \begin{cases}  \tilde{y}_{t}(\xi,t)  +v\nabla_\xi \tilde{y}(\xi,t) = u(t)\tilde{y}(\xi,t) & (\xi,t) \in \Omega \times [0,T]\\ 			
 \tilde{y}(\xi,t)=0 &  \xi \in \partial \Omega \times [0,T]\\
 \tilde{y}(\xi,0)=\tilde{y}_0(\xi) & \xi \in \Omega. 
\end{cases}
\end{equation}
Equation \eqref{transport} can be written in form \eqref{dyn} using finite differences spatial discretization which leads to a system of ODEs:
\begin{equation}
    \label{transport_dyn}
    \begin{cases}
    \dot{y}(t) = A y(t) +  u(t)y(t) & t \in (0,\infty]\\
    y(0) = \tilde{y}_0(x) & x \in \Omega\\
    \end{cases}
\end{equation}
where $A \in \mathbb{R}^{d \times d}$ is the discretization of the gradient term, $y(t) \in \mathbb{R}^d$ and control $u(t)\in U$. Our goal is to steer the solution to $0$, minimizing the following cost functional:
\begin{equation}
	\label{cost_func_trans}
	\mathcal{J}_x(y, u) \equiv \int_0^{\infty} (\|y(s)\|_{2}^{2}  + \gamma |u(s)|^2) e^{-\lambda s}ds
\end{equation}
where $y(t)$ solves \eqref{transport_dyn}. The parameters in \eqref{transport} are $\Omega = [0,5]^2$, $v=1$ and $T=2.5$ is chosen large enough to simulate the infinite horizon. We also set $\gamma = 10^{-5}$ in \eqref{cost_func_trans}.

In this test we select initial conditions from a class of parametrized functions:
\begin{equation}\label{class}
\mathcal{C} \coloneqq \left\{k\sin(\pi x_1)\sin(\pi x_2)\chi_{[0,1]^2}; \text{ } k \in (0,1] \right\}.
\end{equation}
To generate the grid driven by the dynamics we have chosen $k=\{0.5,1\}$ in \eqref{class}, $11$ controls equidistributed in $U=[-2,0]$ and $\bar \Delta t = 0.1$. Thus, in \eqref{eq:mesh}, we set:
$\bar{\Delta t} = 0.1; 
\bar{M} = 11;
\bar{L}=2$. Equation \eqref{transport_dyn} is then solved for each initial condition and each control with $\Omega$ discretized with  $d=10201$ points, and the time interval $[0,2.5]$. 
After computing the grid generated by the dynamics, we run Algorithm \ref{algo1} with $\mathcal{P} = [0.4, 0.7]$ discretized with step size $0.05$, fixing $\Delta t = 0.05$ to discretize \eqref{transport_dyn} by an implicit Euler method and $21$ equidistributed controls. The residual $R(\theta)$ reaches is minimimum with $\bar{\theta }= 0.65$ as shown in the bottom-left panel of Figure \ref{initial_075sinsin_transp}. The CPU time to run our algorithm is $583.6$ seconds.



To obtain the feedback control and optimal trajectories we have further discretized the set $U$ considering $81$ points. 
Thus, we have studied the control problem for different initial conditions selected from the set \eqref{class} using the value function already stored. In Figure \ref{initial_075sinsin_transp} we can compare in the top panels the uncontrolled solution, i.e., the transport of initial condition in $\Omega$ considering $u(t) = 0$, with controlled solution for $y(x,0)=0.75\sin(\pi x_1)\sin(\pi x_2)\chi_{[0,1]^2}$. The respective optimal control is shown in the bottom-right panel of Figure \ref{initial_075sinsin_transp}. Note that this initial condition does not belong to the grid.
\begin{figure}[htbp]
	\label{initial_075sinsin_transp}
	\centering 
	\includegraphics[scale = 0.35]{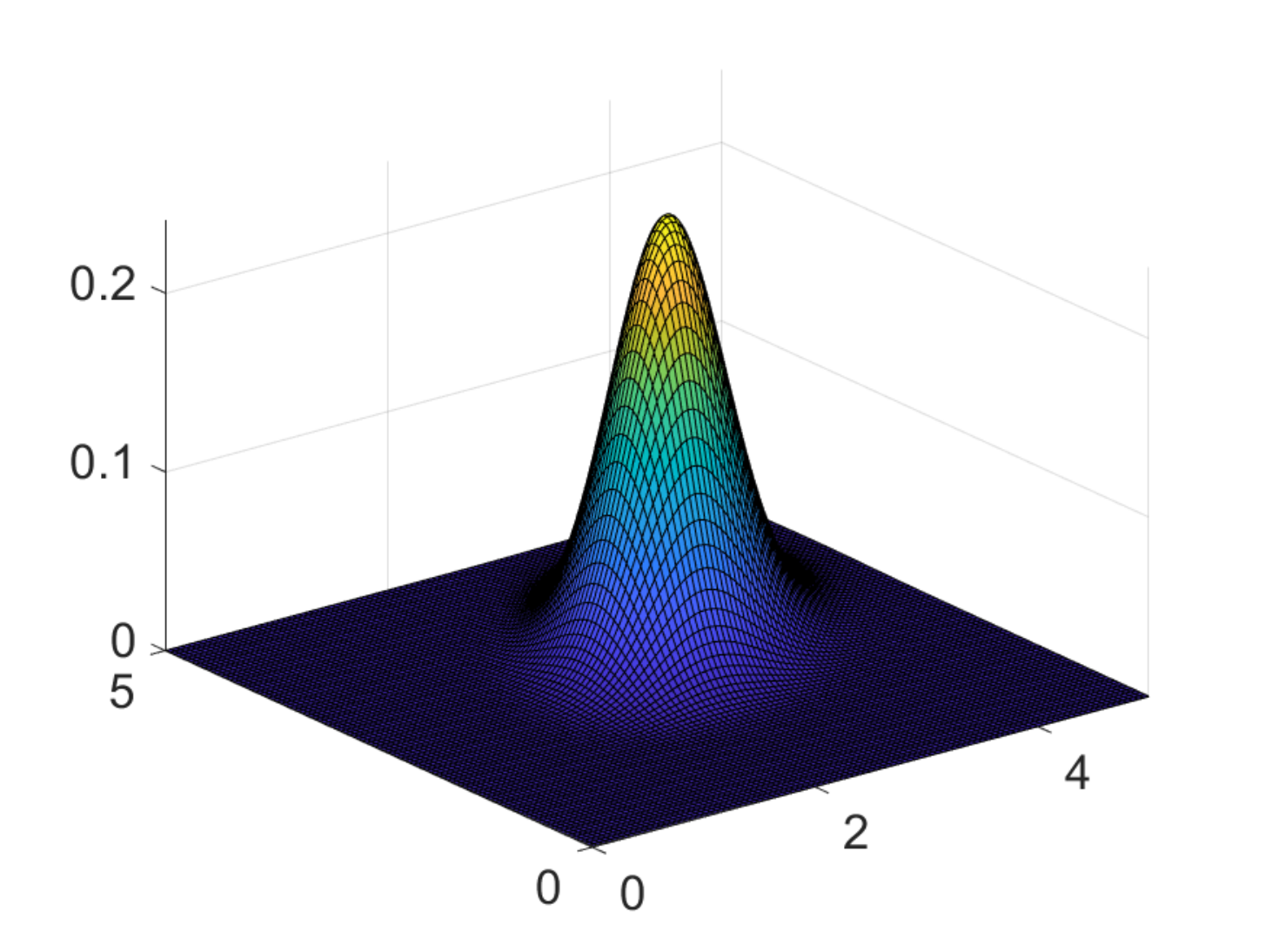}
	\includegraphics[scale = 0.35]{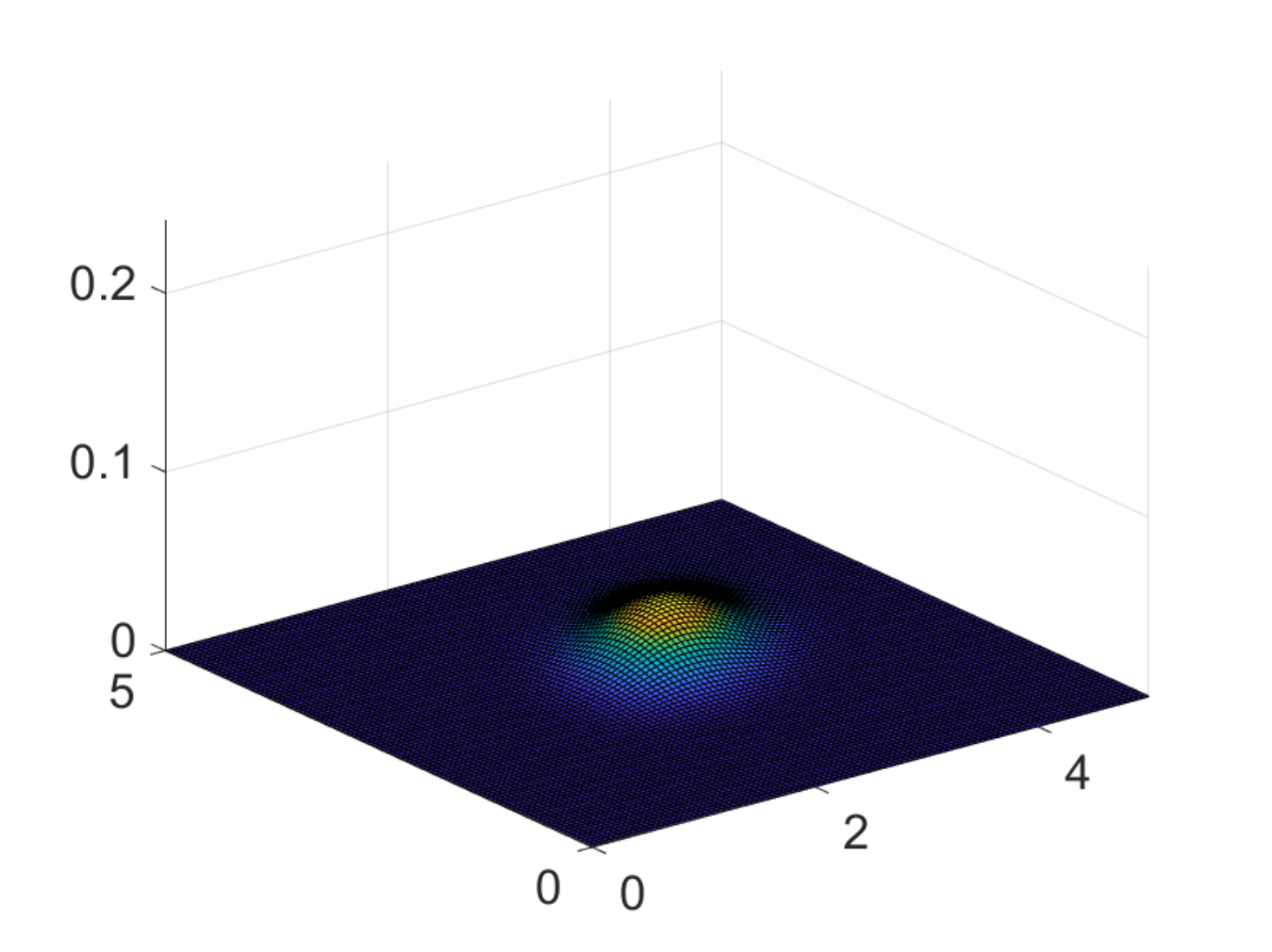}\\
	\includegraphics[scale = 0.35]{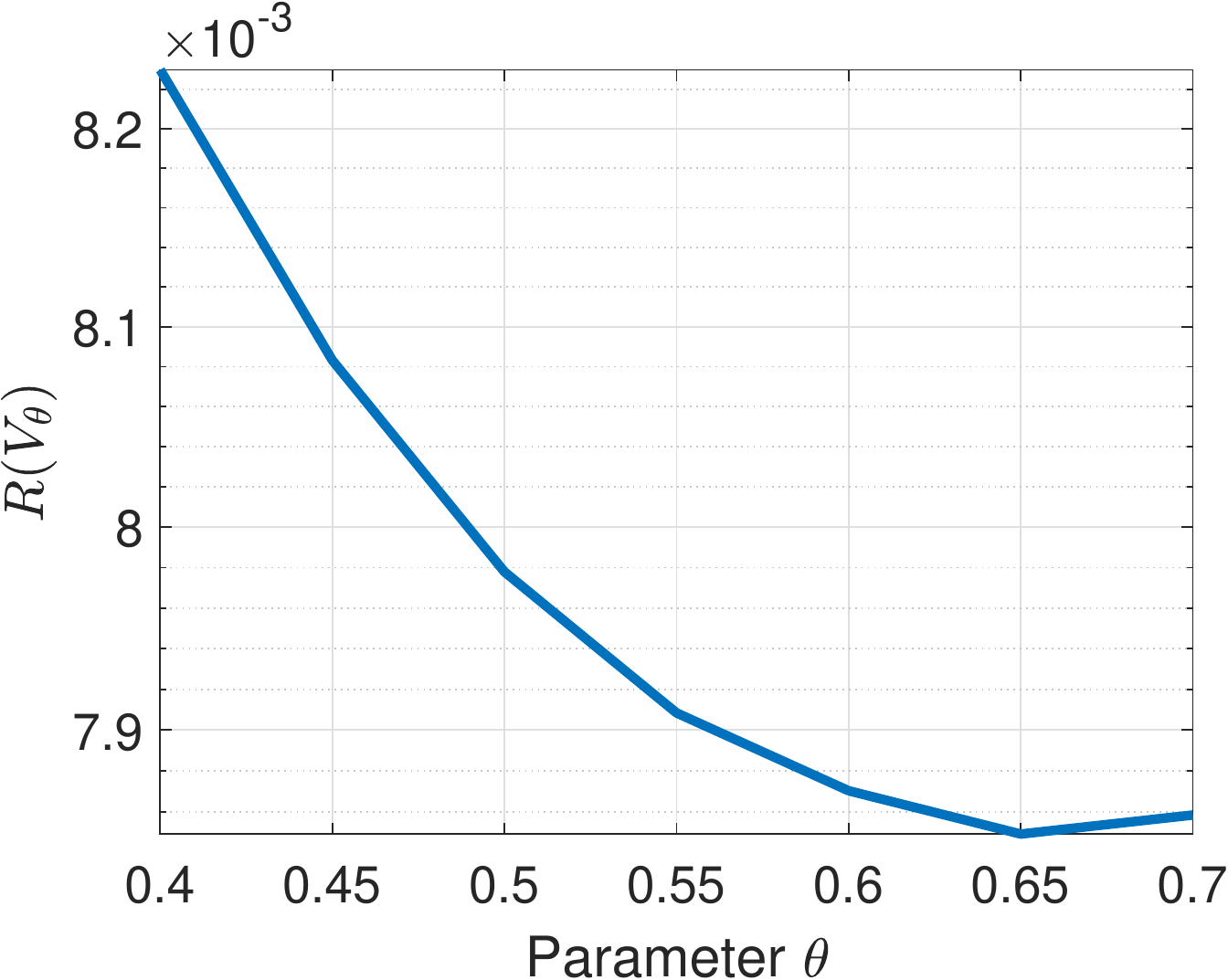}
	\includegraphics[scale = 0.35]{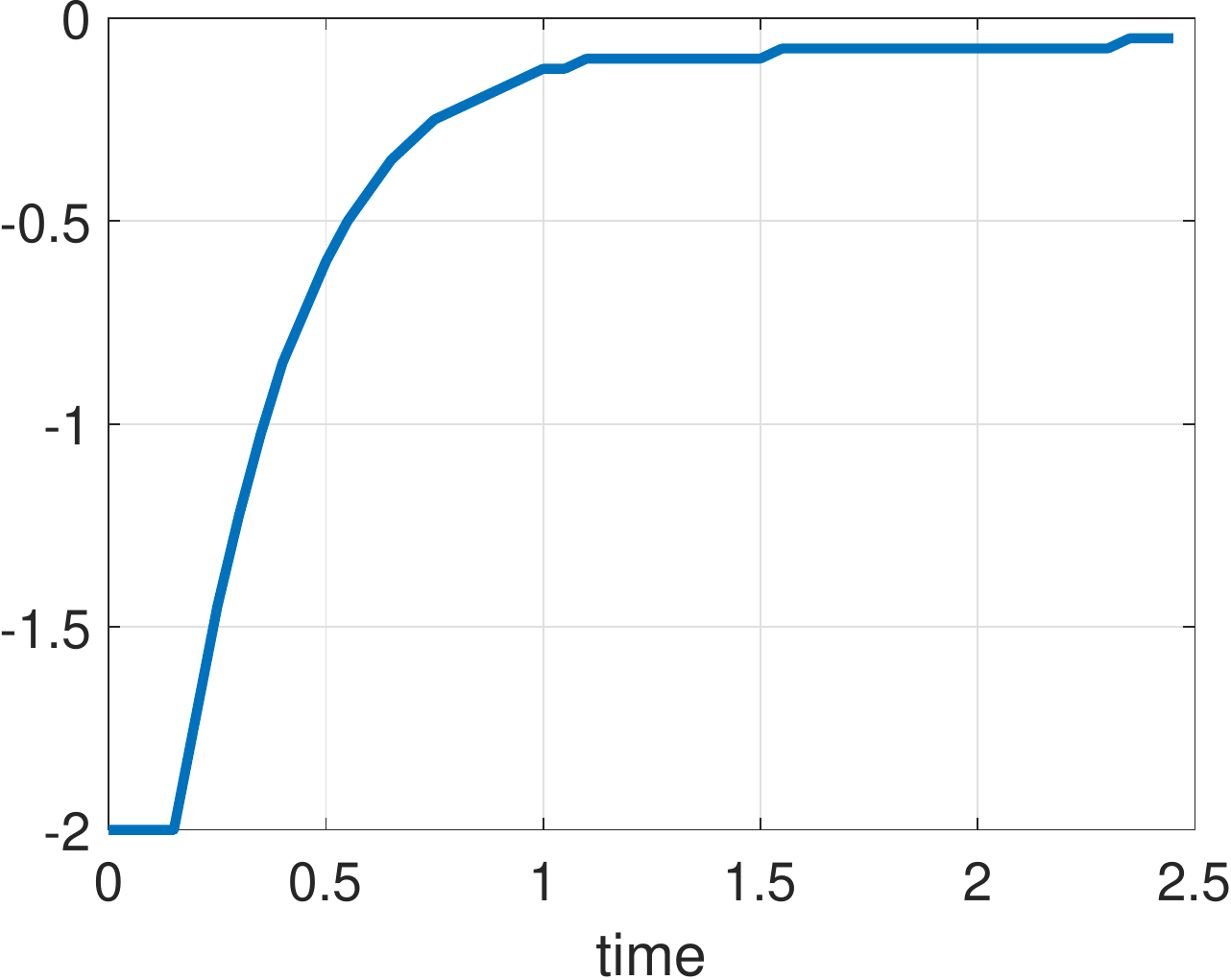}
	\caption{Test 2. Initial condition $y(x,0)=0.75sin(\pi x_1)sin(\pi x_2)\chi_{[0,1]^2}$. Top:  uncontrolled solution (left) and controlled solution (right). Bottom: residual (left) and optimal control (right).}
\end{figure}

We have also studied other initial conditions, with $k=\{0.5,1\}$ in \eqref{class}. The behavior of the solution is similar to Figure \ref{initial_075sinsin_transp}. We show a plot of the cost functionals in Figure \ref{transp_costfunct}, and we see that the controlled solution has always a lower cost functional than the uncontrolled solutions.
We are able to reach the desired configuration with the three initial conditions considered. 



\begin{figure}[htbp]
	\label{transp_costfunct}
	\centering
		
		\includegraphics[scale = 0.27]{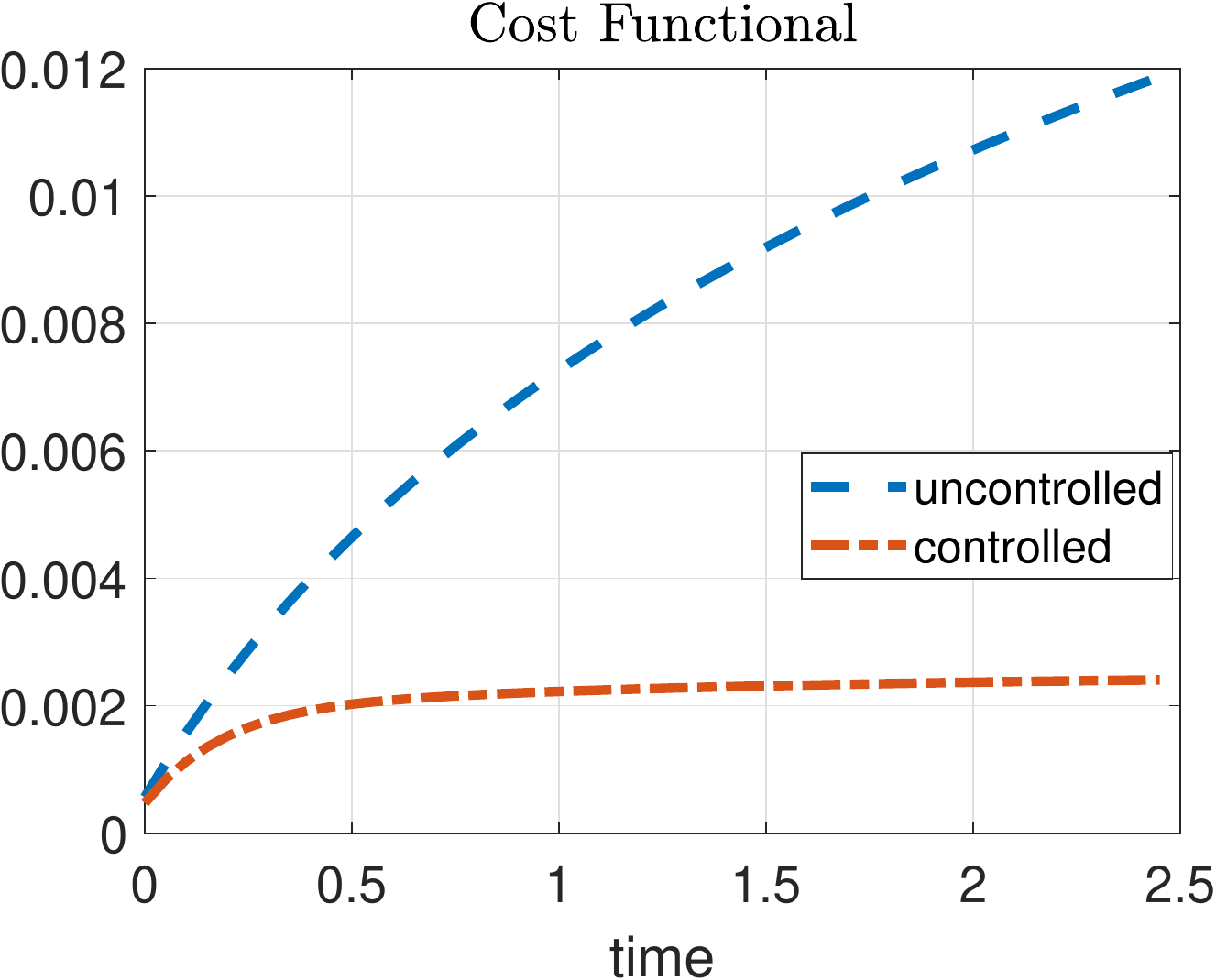}
		\includegraphics[scale = 0.27]{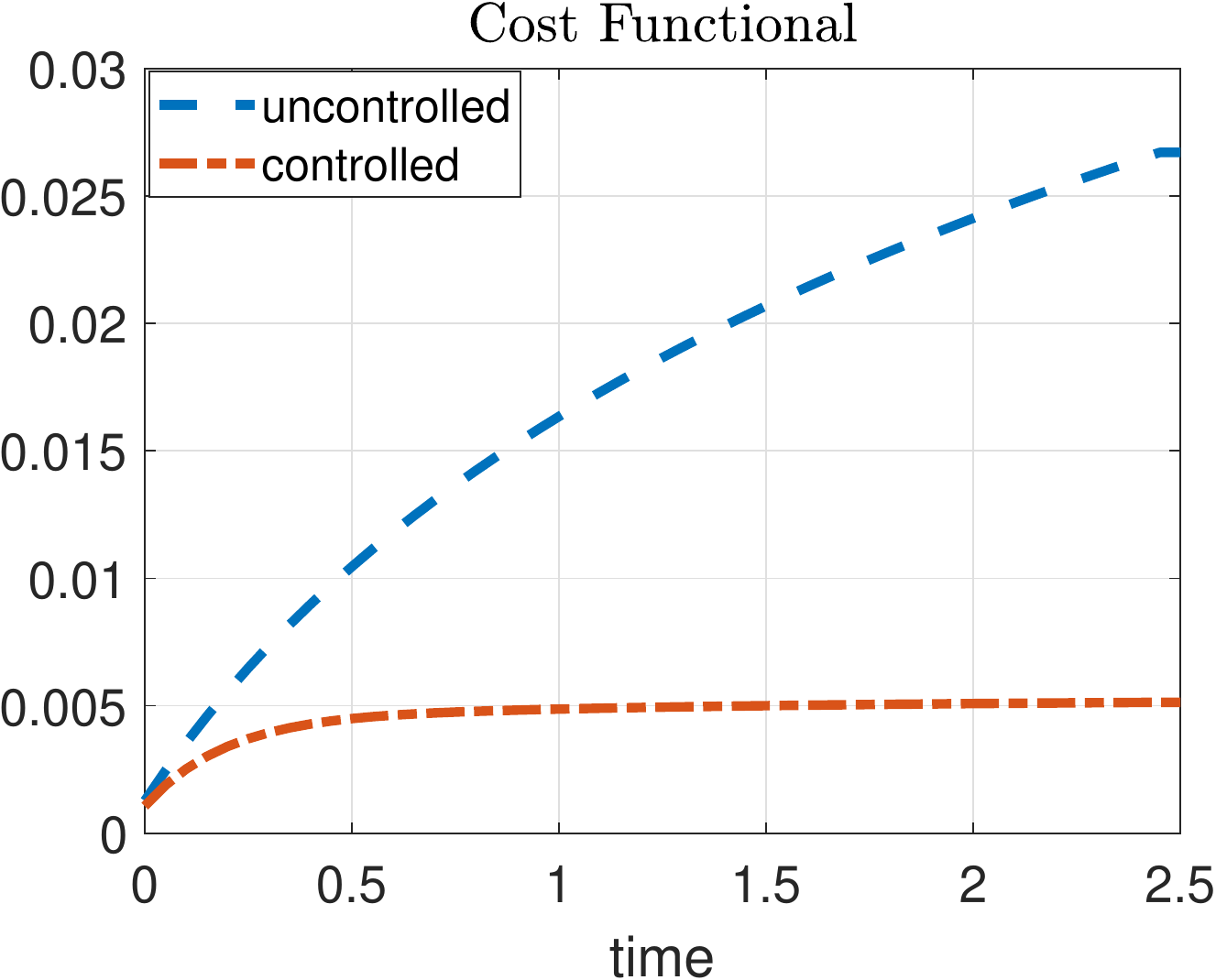}
		\includegraphics[scale = 0.27]{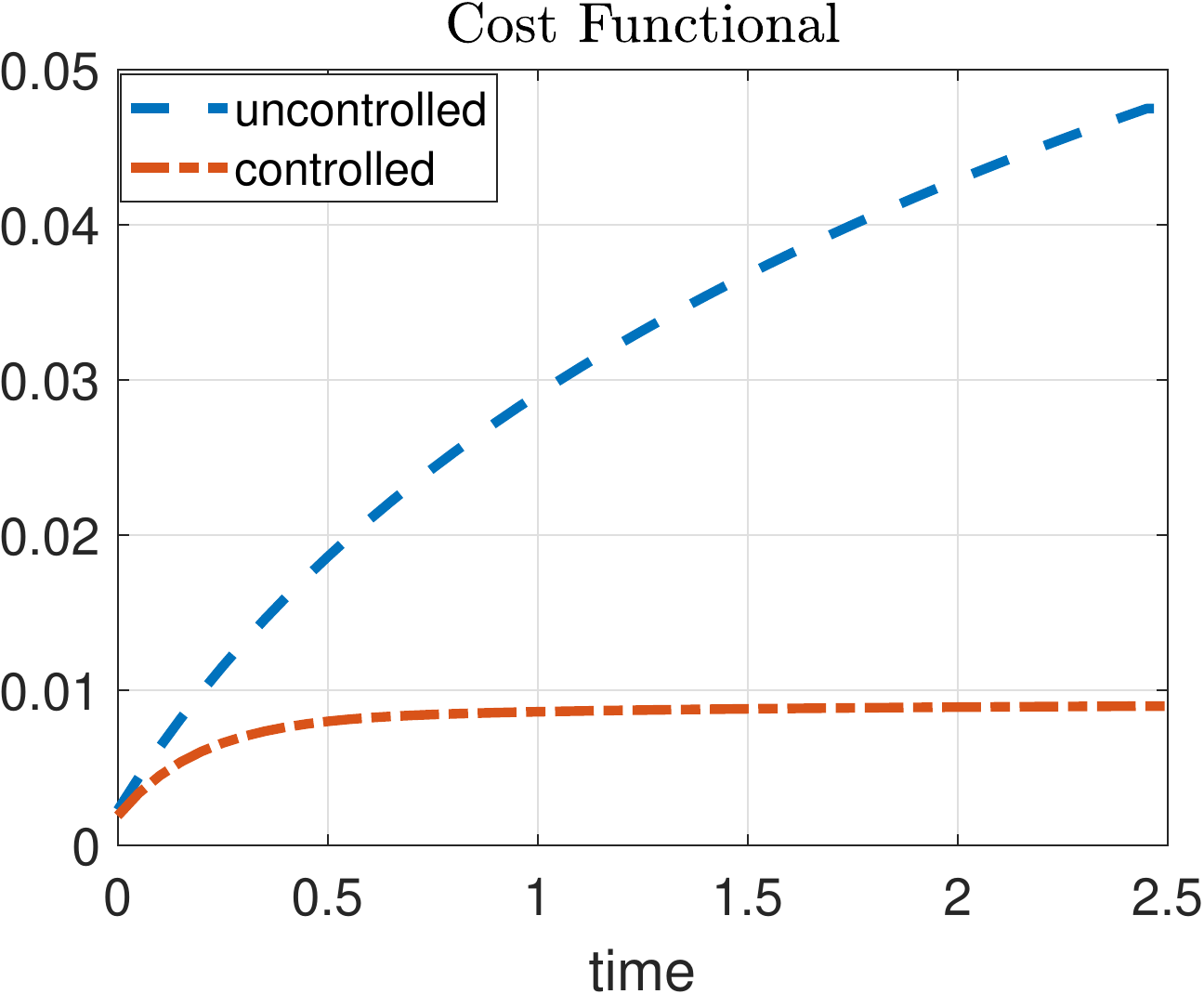}
		\caption{Test 2. Cost functional. Left: $y(x,0)=0.5 sin(\pi x_1)sin(\pi x_2)\chi_{[0,1]^2}$. Center: $0.75y(x,0)=sin(\pi x_1)sin(\pi x_2)\chi_{[0,1]^2}$. Right: $y(x,0)=sin(\pi x_1)sin(\pi x_2)\chi_{[0,1]^2}$.}
\end{figure}

\subsection*{Test 3: Nonlinear Heat Equation}
		This test deals with the control of a two-dimensional parabolic equation with polynomial  nonlinearities: 
\begin{equation}
\label{heat}
    \begin{cases}   \tilde{y}_{t}(\xi,t) = \alpha \Delta \tilde{y}(\xi,t) + \beta (\tilde{y}^2(\xi,t)-\tilde{y}^3(\xi,t))+ u(t)\tilde{y}_{0}(\xi) & (\xi,t) \in \Omega \times [0,\infty)\\ 
		\partial_{n} \tilde{y}(\xi,t)=0 & \xi \in \partial \Omega \times [0,\infty)\\
	\tilde{y}_{0}(\xi) = \tilde{y}(\xi,0) & \xi \in \Omega \\
\end{cases}
\end{equation}	
with $\Omega = [0,1]\times[0,1], \alpha =\frac{1}{100}, \beta = 6$. Finite differences in space for \eqref{heat} leads to 
\begin{equation}\label{eq:non}
    \begin{cases}
            \dot{y}(t) = A y(t) +  B u(t) + f(y(t)) & t \in (0,\infty]\\
    y(0) = \tilde{y}_0(\xi) & \xi \in \Omega
    \end{cases}
\end{equation}
where $A \in \mathbb{R}^{d \times d}$ is the discretization of the laplacian, $B \in \mathbb{R}^{d}$ where $B_i = y_{0}(\xi_i)$ for $i=1, \ldots, d$ and $\xi_i$ a node of the discretization of $\Omega$. Here the nonlinear term $f: \mathbb{R}^d \rightarrow \mathbb{R}^d$ is $f(y(t)) = y(t)^2 - y(t)^3$.

We want to minimize again \eqref{cost_func_trans} as in the previous test and consider the class \eqref{class} of initial conditions. Here, we build an unstructured mesh using as initial condition with $k=\{0.5,1\}$ in \eqref{class}. In \eqref{eq:mesh}, we set
$\bar{\Delta t} = 0.1, 
\bar{M} = 41,
\bar{L}=2$. The state space $\Omega = [0,1]^2$ was discretized in $31^2$ points which is the dimension of the discretized problem \eqref{eq:non}. The control space  $U=[-2,0]$ is discretized with $41$ points. The time domain is $T = [0,5]$ which was discretized in $51$ points. We set $\gamma = 10^{-4}$ in \eqref{cost_func_trans} and run Algorithm \ref{algo1} using $\mathcal{P} = [2, 2.4]$ with step size of $0.05$ and $\Delta t = 0.075$. The parameter that minimizes $R(\theta)$ is $\bar{\theta} = 2.25$ as shown in Figure \ref{residual_pde_heat}. 
The time needed to approximate the value function is approximately $20$ minutes.


\begin{figure}[htbp]
	\label{residual_pde_heat}
	\centering
		\includegraphics[scale = 0.3]{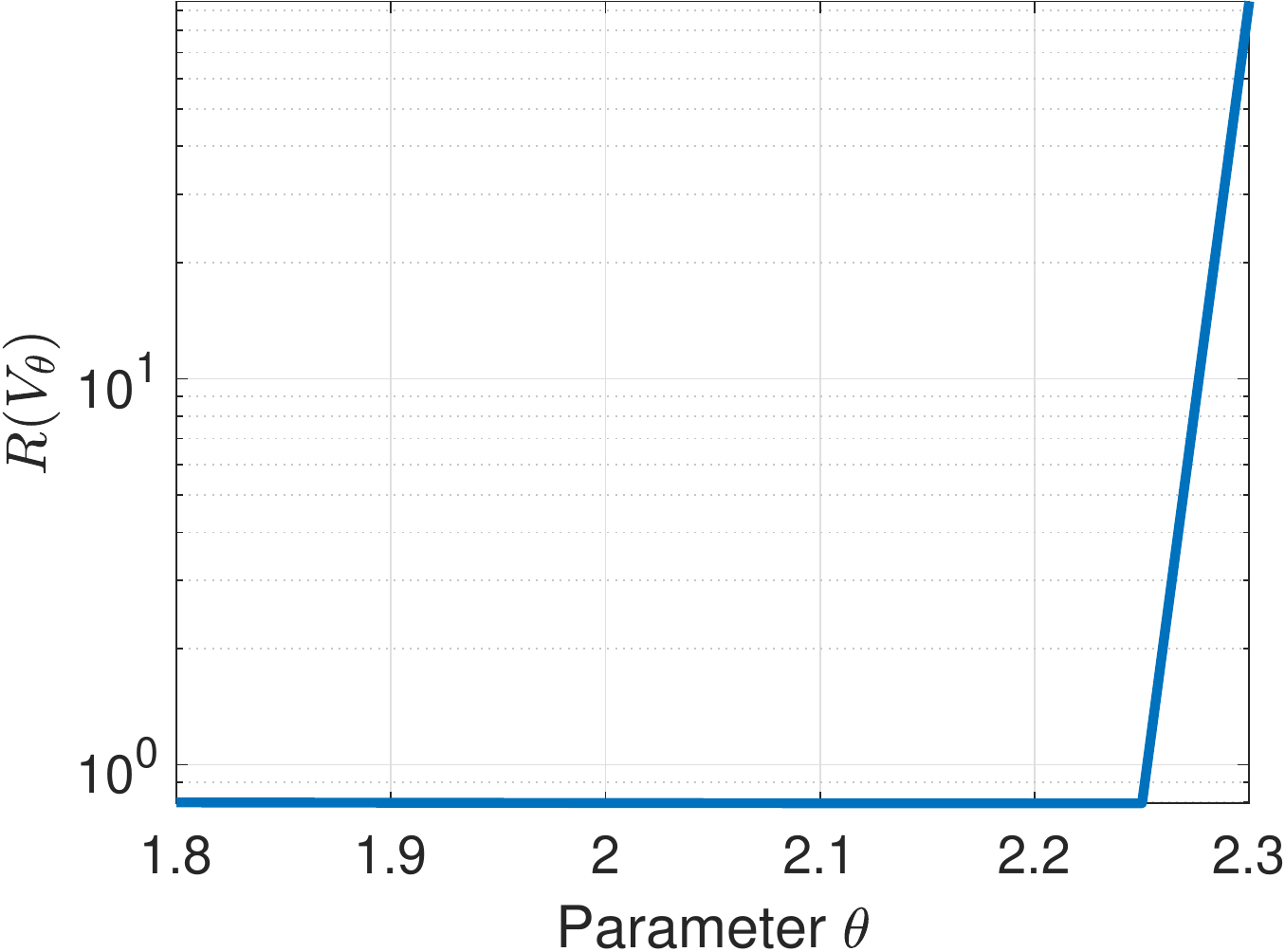}
		\includegraphics[scale = 0.3]{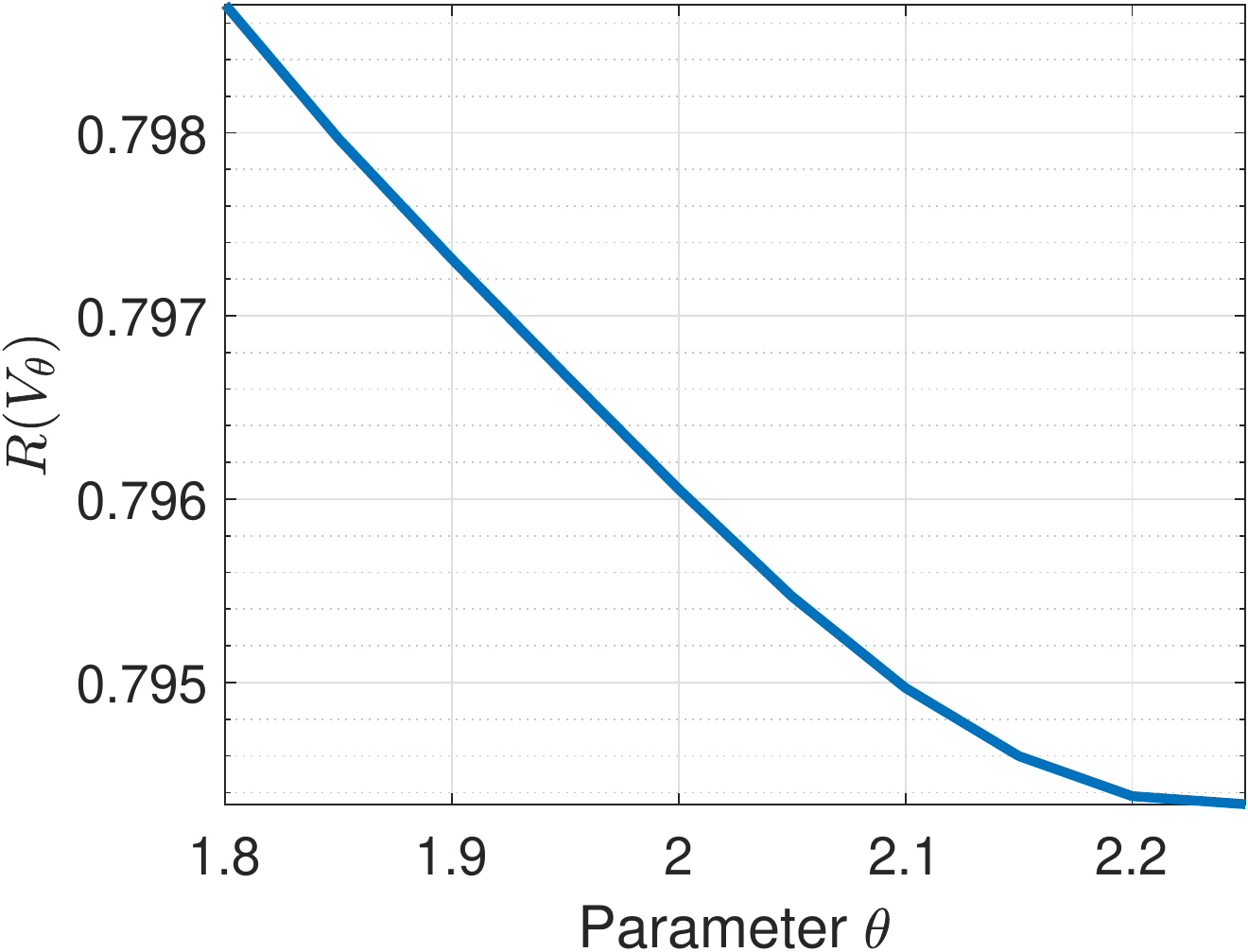}
		\caption{Test 3. Residual.}
\end{figure}

We present the controlled solutions for different initial conditions taken from $\mathcal{C}$ in \eqref{class}. First, we consider the initial condition $y(x,0)=0.75\sin(\pi x_1)\sin(\pi x_2)$ and the results are shown in Figure \ref{initial_sinsin}. As one can see in the left panel the solution reach the unstable equilibrium $y(t)=1$, whereas the controlled solution goes to $0$ as desired. The optimal control is then shown in the right panel of Figure \ref{initial_sinsin}. Note that the initial condition does not belong to the grid where we computed the value function.

\begin{figure}[htbp]
	\label{initial_sinsin}
	\centering 
	\includegraphics[scale = 0.25]{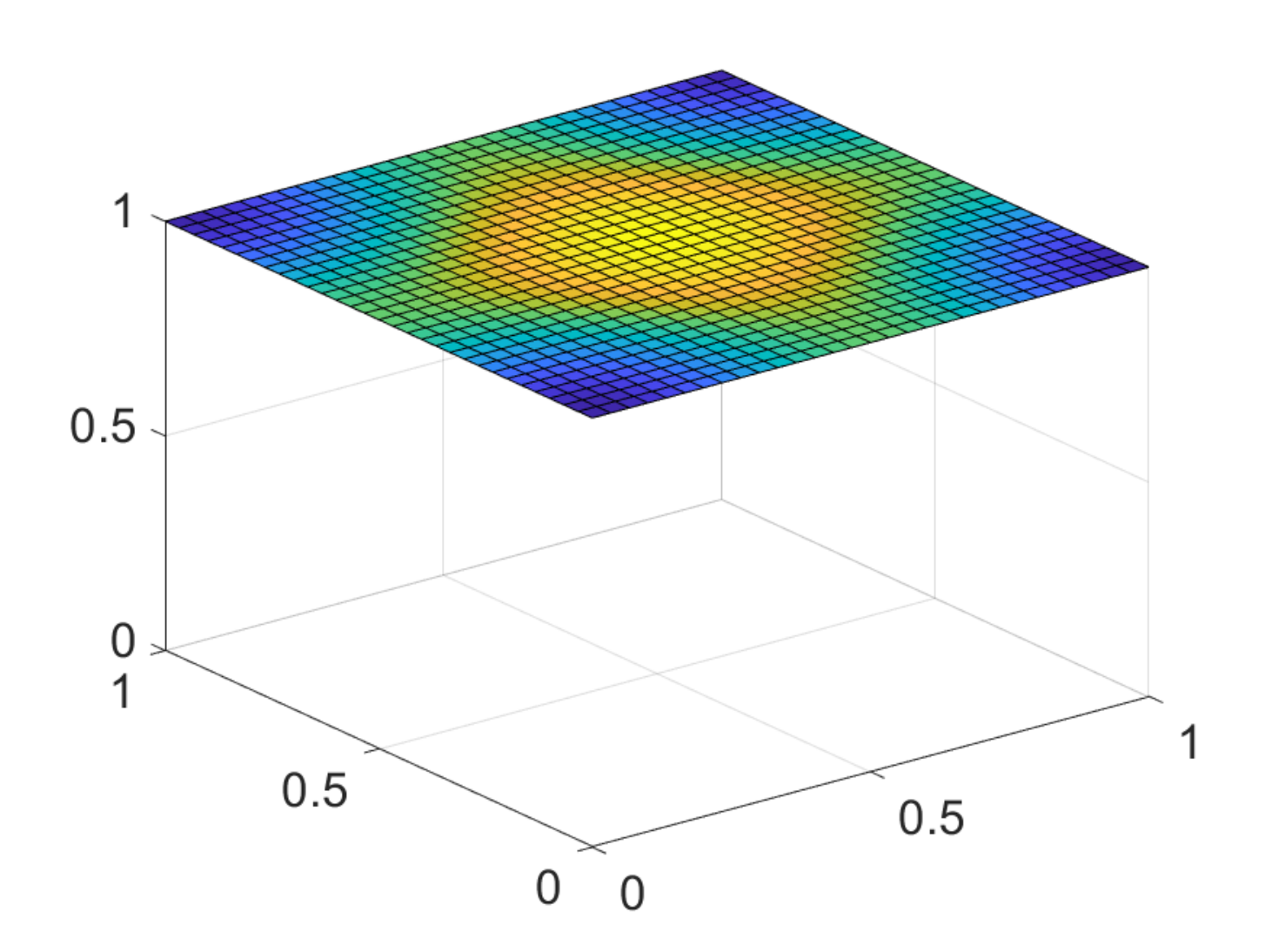}
	\includegraphics[scale = 0.25]{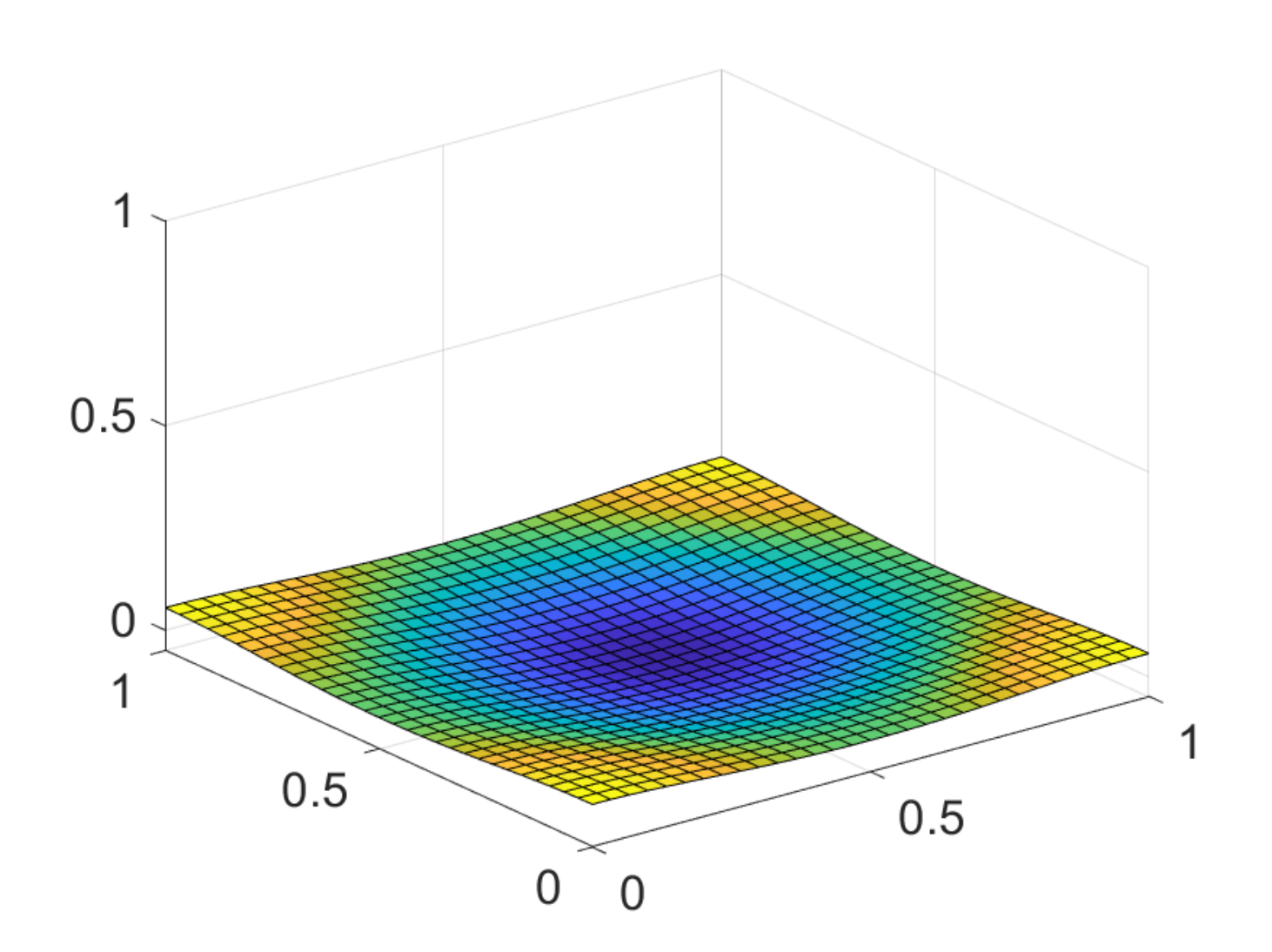}
	\includegraphics[scale = 0.25]{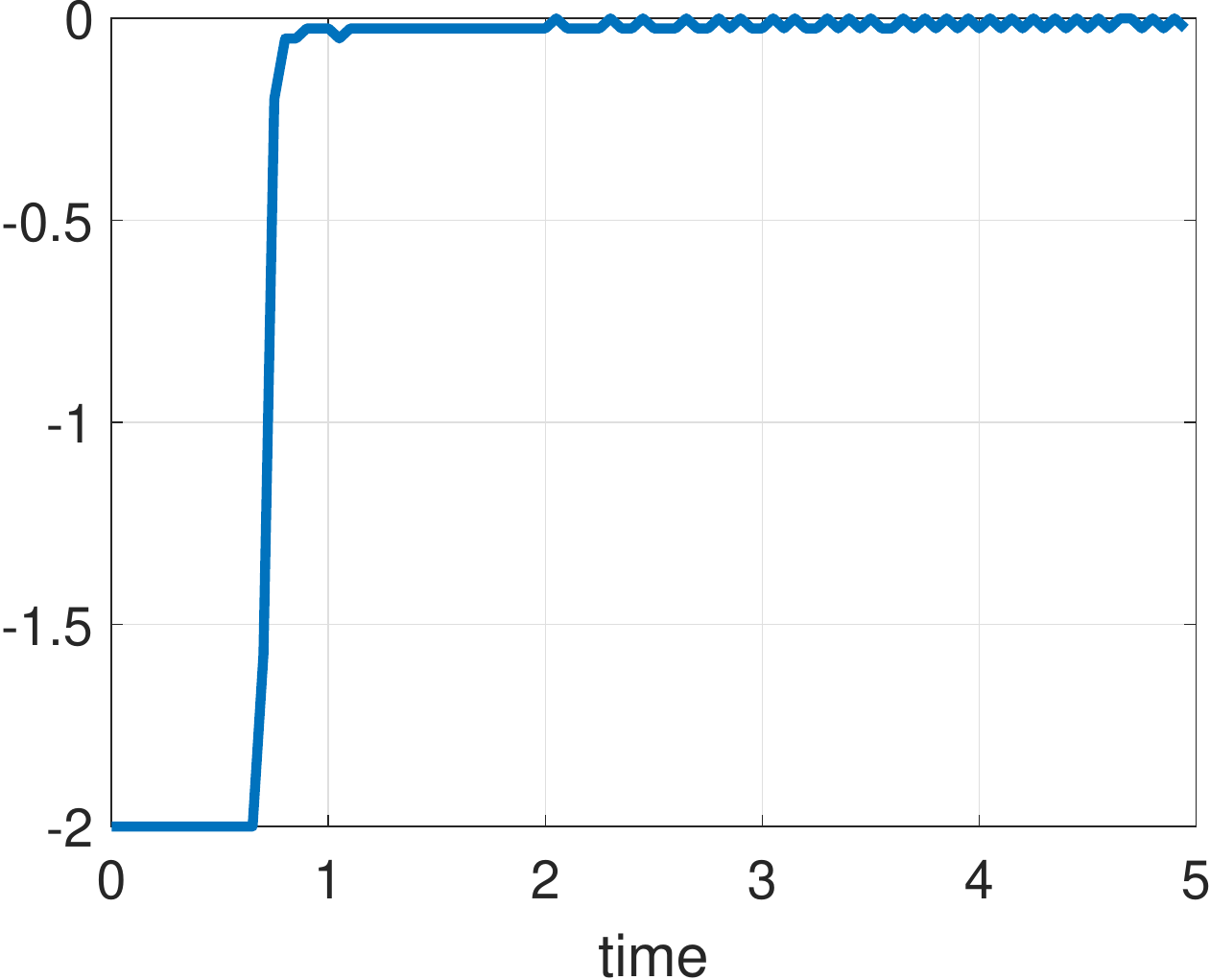}
		\caption{Test 3. Initial condition $y(x,0)=0.75 \sin(\pi x_1)\sin(\pi x_2)$.  Left: uncontrolled solution at time $t=5$. Middle: controlled solution at time $t=5$. Right: optimal control.}
\end{figure}


Figure \ref{heat_costfunct} presents the evaluation of the cost functional for the initial conditions considered. As expected, the cost of controlled solutions is always smaller than costs of uncontrolled solutions.

\begin{figure}[htbp]
	\label{heat_costfunct}
	\centering
	
		\includegraphics[scale = 0.27]{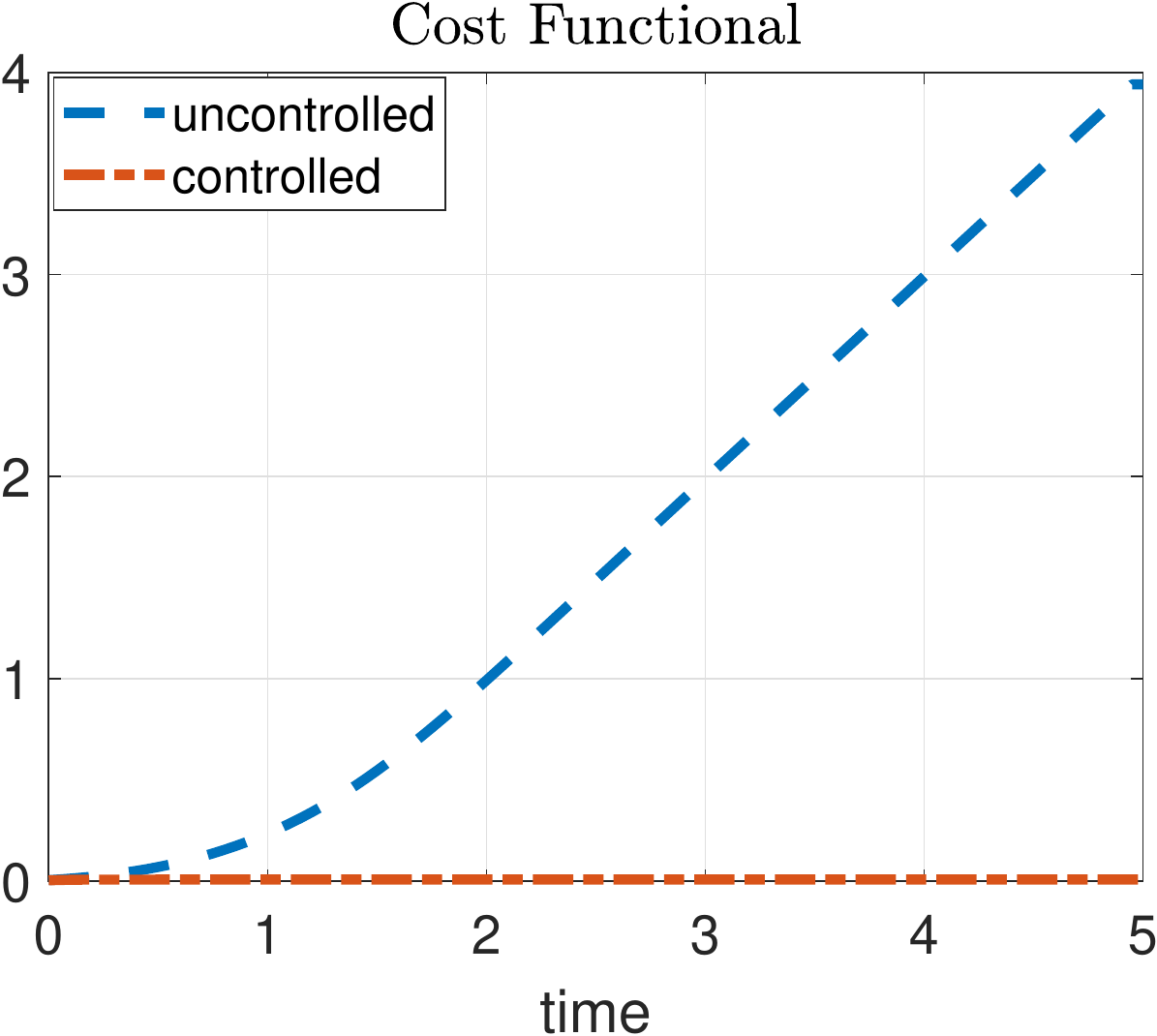}
		\includegraphics[scale = 0.27]{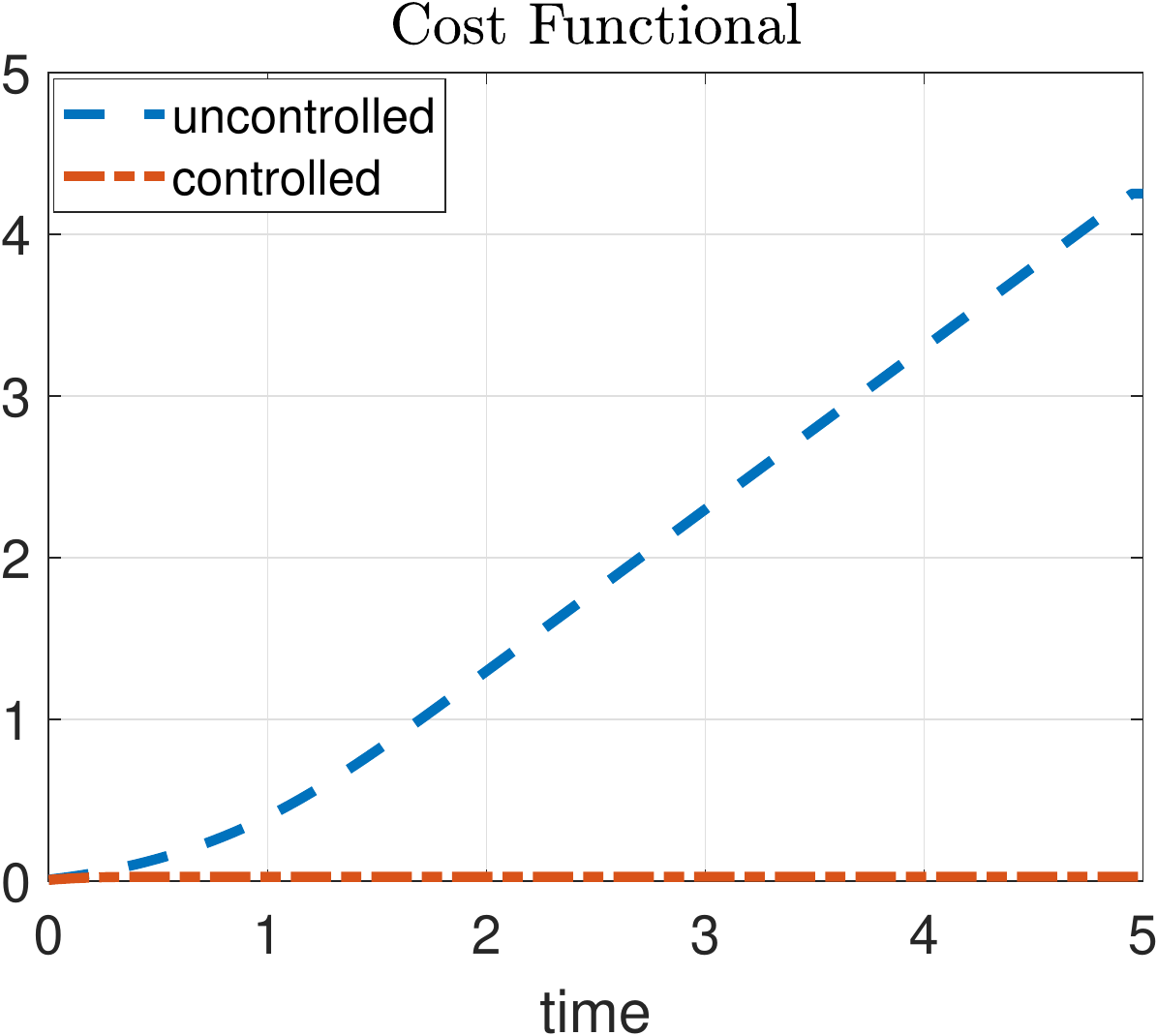}			\includegraphics[scale = 0.27]{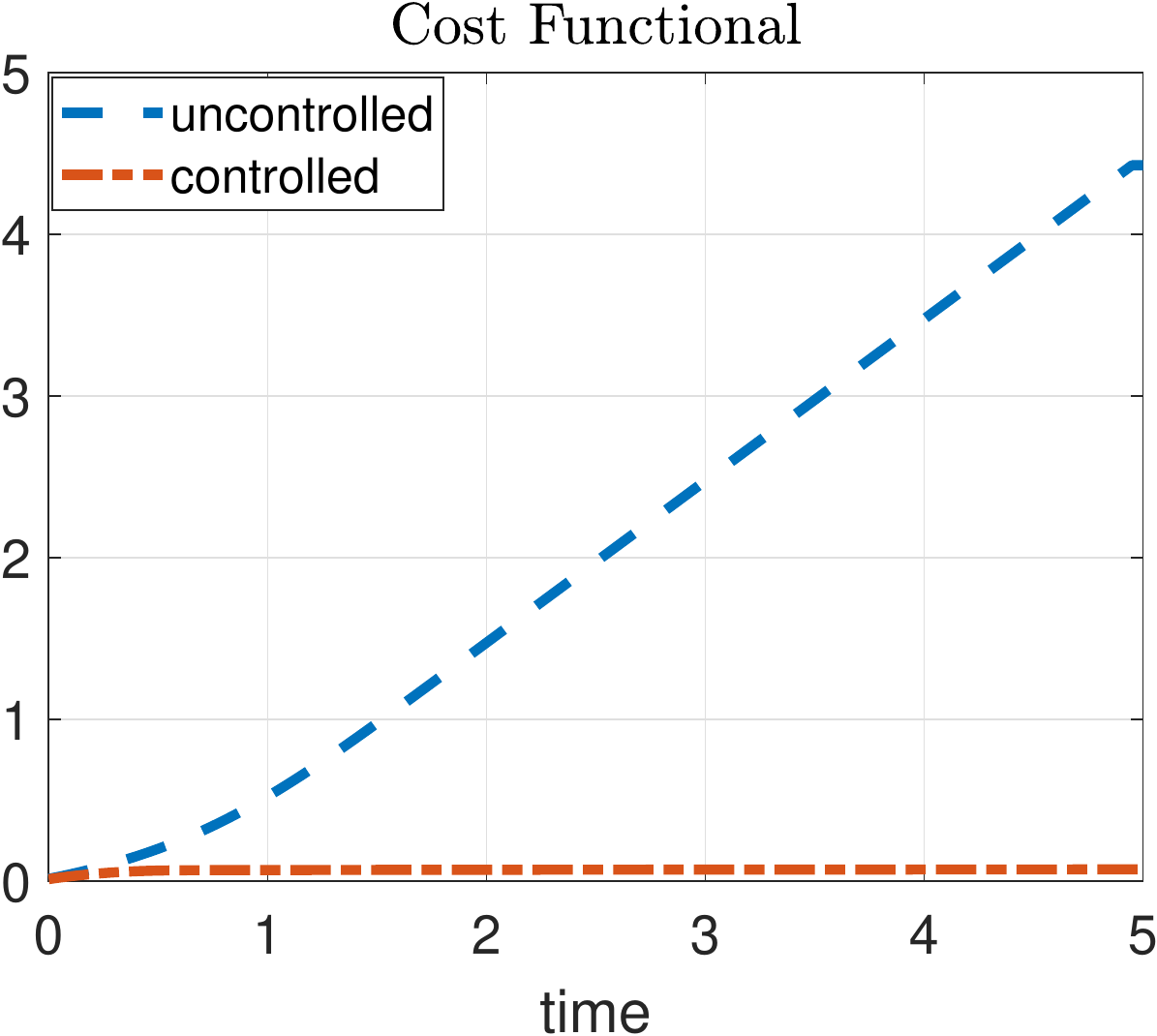}
		\caption{Test 3. Cost functional. Left: $y(x,0)=0.5sin(\pi x_1)sin(\pi x_2)$. Center: $0.75y(x,0)=sin(\pi x_1)sin(\pi x_2)$. Right: $y(x,0)=sin(\pi x_1)sin(\pi x_2)$}
\end{figure}

We now check the robustness of the method adding a noise term at each time instance. We keep using the same value function stored before and we computed the optimal trajectory for a initial condition with a small perturbation. Here we consider $y(x,0)=0.75 \sin(\pi x_1)\sin(\pi x_2) + \mathcal{N}(0, 0.025)$ where $\mathcal{N}(0, 0.025)$ is a normally distributed random variable with mean zero and standard deviation $0.025$. With these parameters we have a probability of $95,45\%$ of selecting a number in the range $[-0.05, 0.05]$ at each iteration. At each time iteration a new independent perturbation $\mathcal{N}(0, 0.025)$ has been added to the trajectory. Left picture of Figure \ref{initial_075sinsin_pert} presents the uncontrolled trajectory and the solution converges (somehow) to $y(x)=1$ with a perturbation. The middle panel shows the controlled solution and how it is close to $y(x)=0$, also with a perturbation. The left panel in the bottom line of Figure \ref{initial_075sinsin_pert} presents the optimal control. A comparison of the cost functional for the perturbed problem is show in the bottom right panel of Figure \ref{initial_075sinsin_pert}. 


\begin{figure}[htbp]
	\label{initial_075sinsin_pert}
	\centering 
	\includegraphics[scale = 0.3]{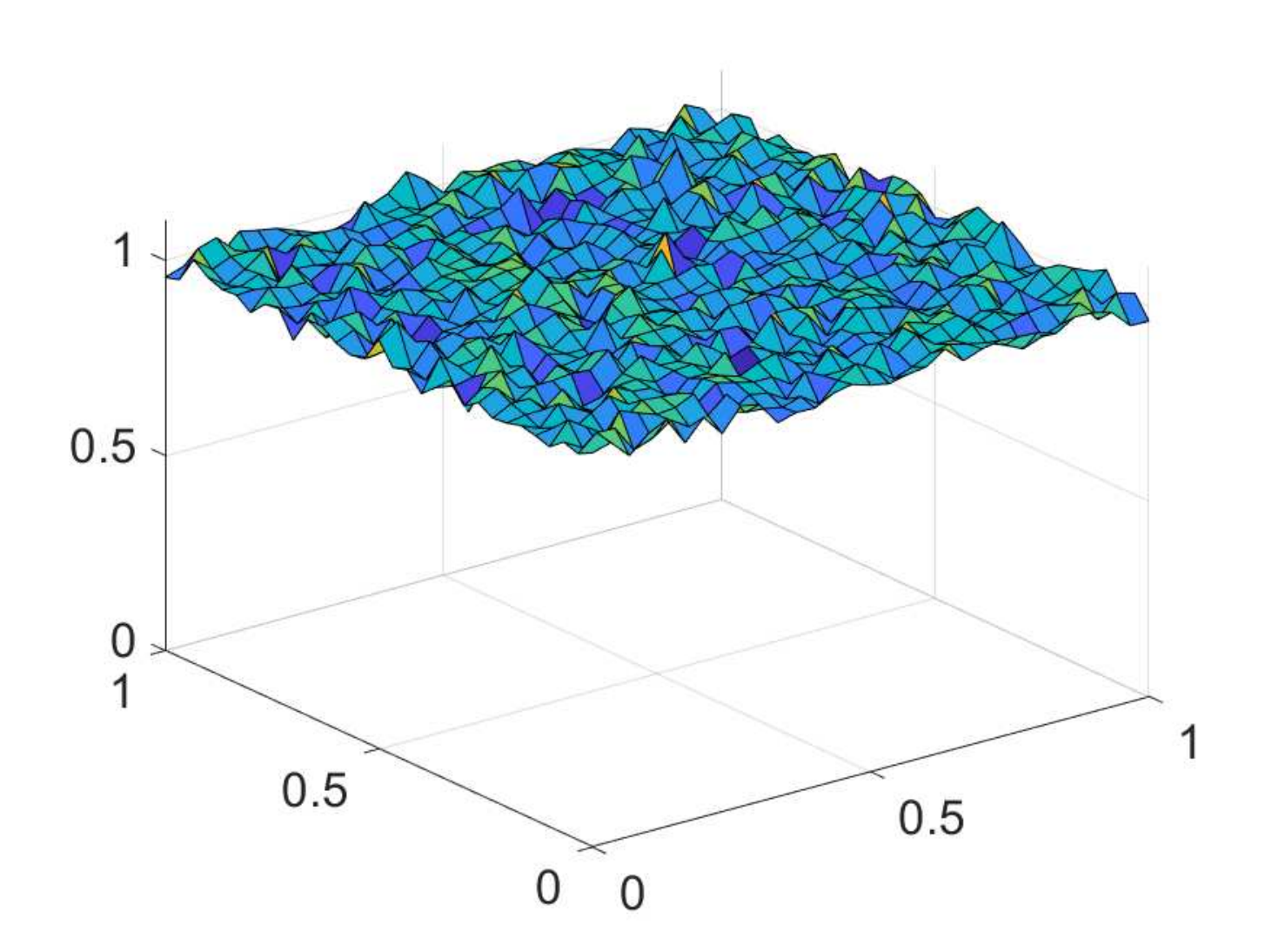}
	\includegraphics[scale = 0.3]{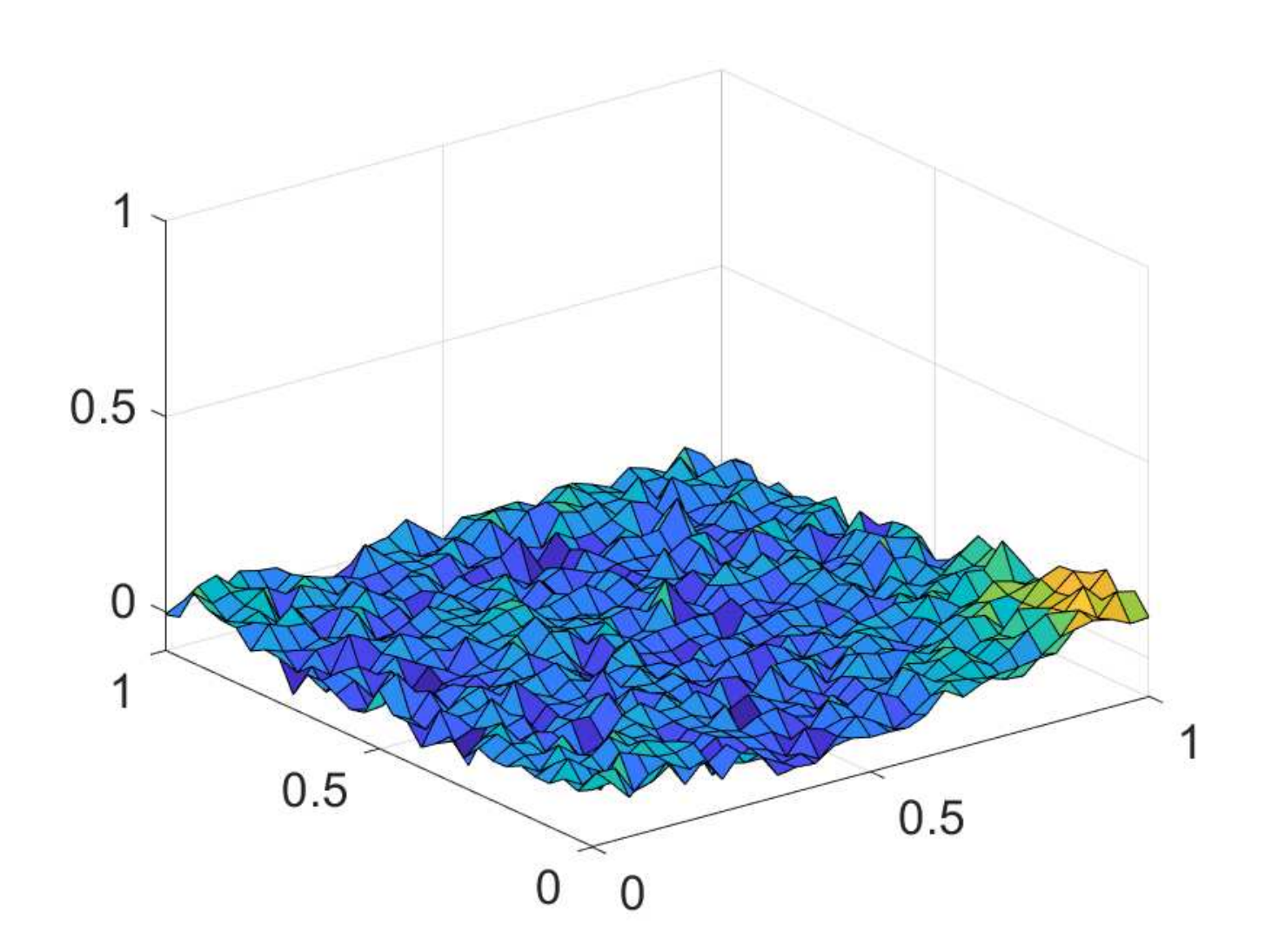}\\
	
	\includegraphics[scale = 0.3]{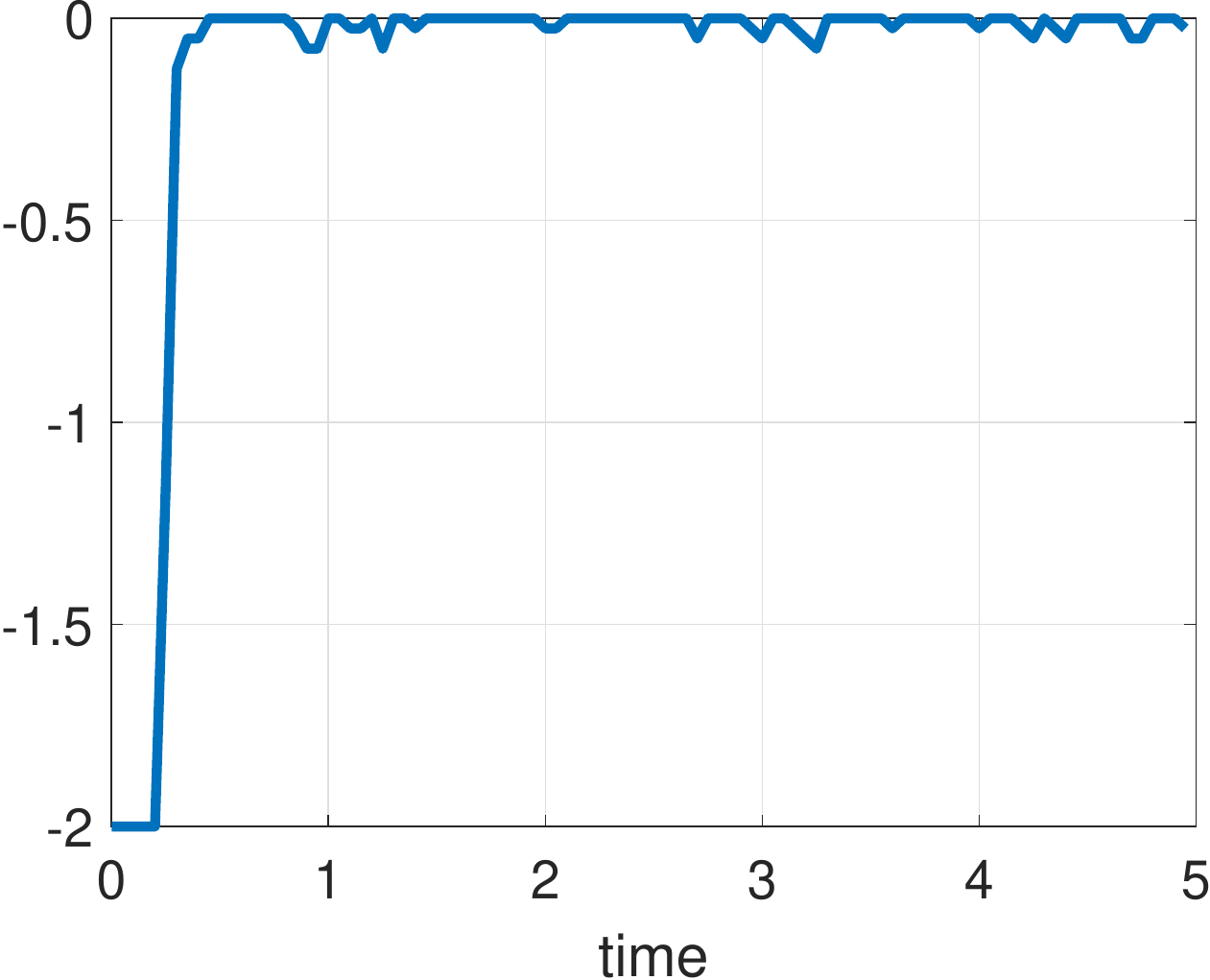}
	\includegraphics[scale = 0.3]{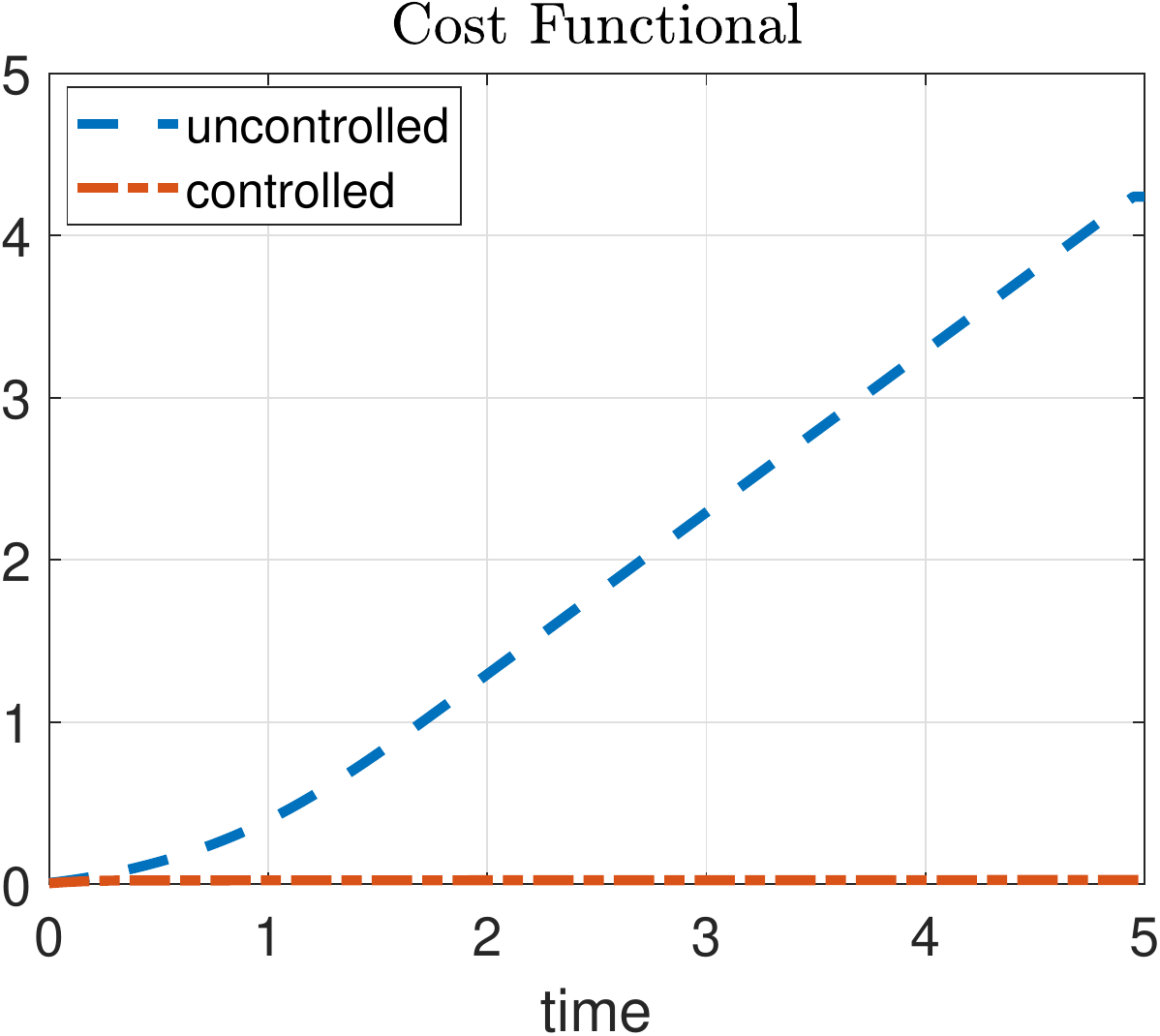}
		
		\caption{Test 3. Initial condition $y(x,0)=0.75sin(\pi x_1)sin(\pi x_2) + \mathcal{N}(0, 0.025)$.  Left: uncontrolled solution. Middle: controlled solution. Right: optimal control. Running Cost with initial condition $y(x,0)=0.75sin(\pi x_1)sin(\pi x_2) + \mathcal{N}(0, 0.025)$}
\end{figure}

Finally, to further show the effectiveness of our method we consider a non-smooth initial condition
that does not belong to \eqref{class}:
$$ y(x,0)=\max \{- (2|x-0.5|+1)(2|y-0.5|+1)+2,0\} $$ as shown in the top left panel of Figure \ref{initial_pyramidal}.
Here we consider a time interval $T=[0,8]$ discretized in $161$ points. 


\begin{figure}[htbp]
	\label{initial_pyramidal}
	\centering 
	\includegraphics[scale = 0.27]{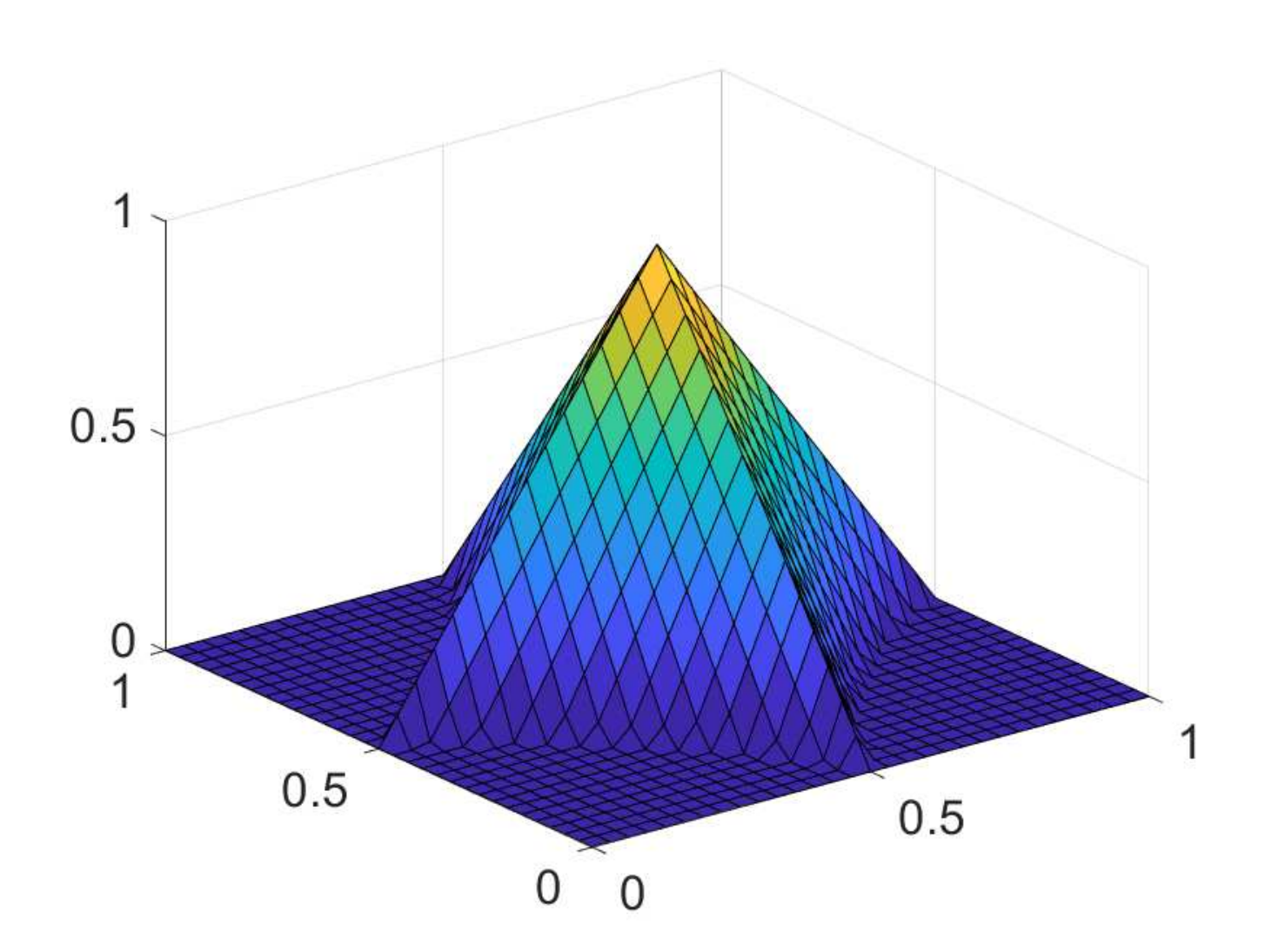}
	\includegraphics[scale = 0.27]{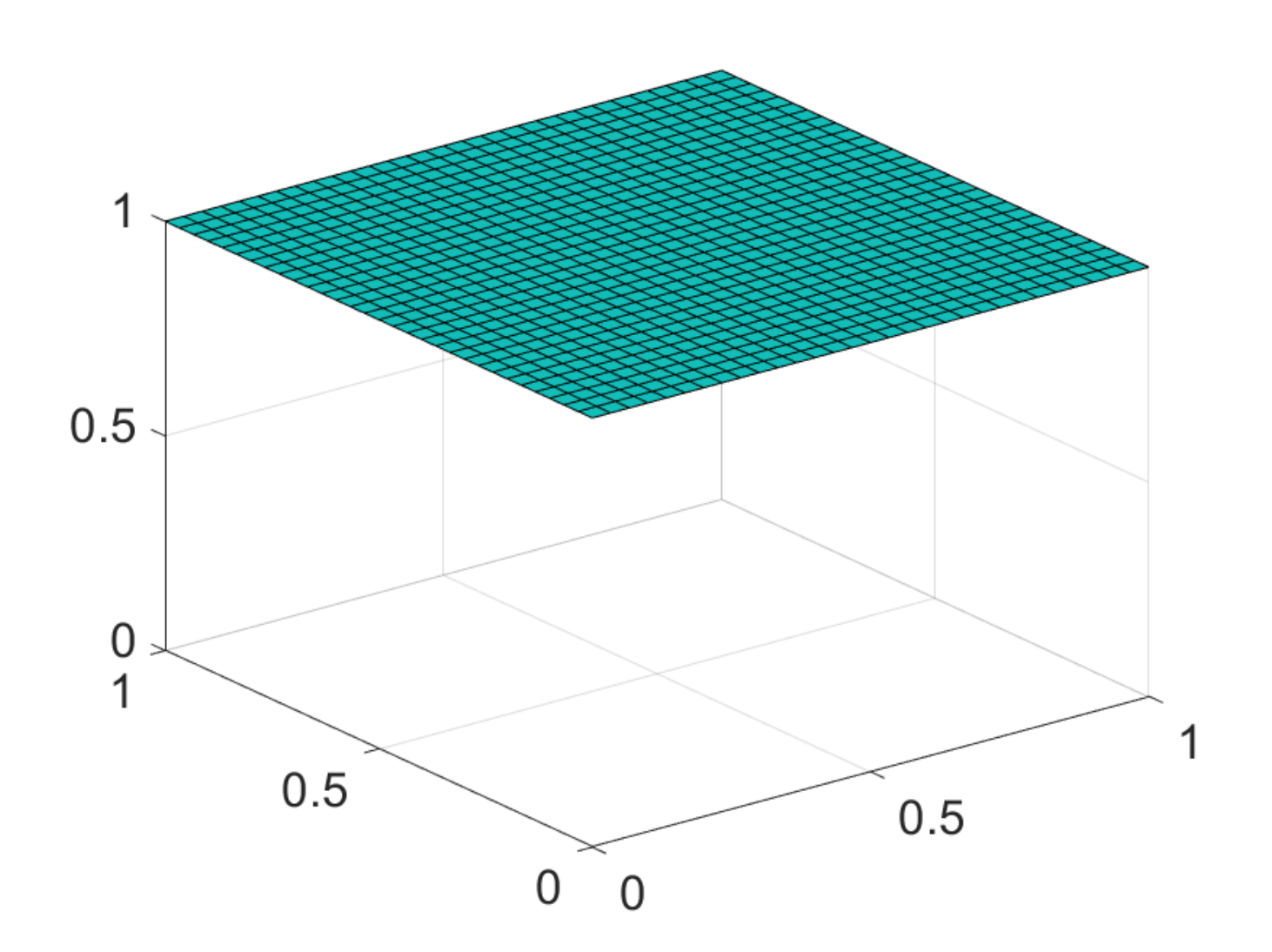}
	\includegraphics[scale = 0.27]{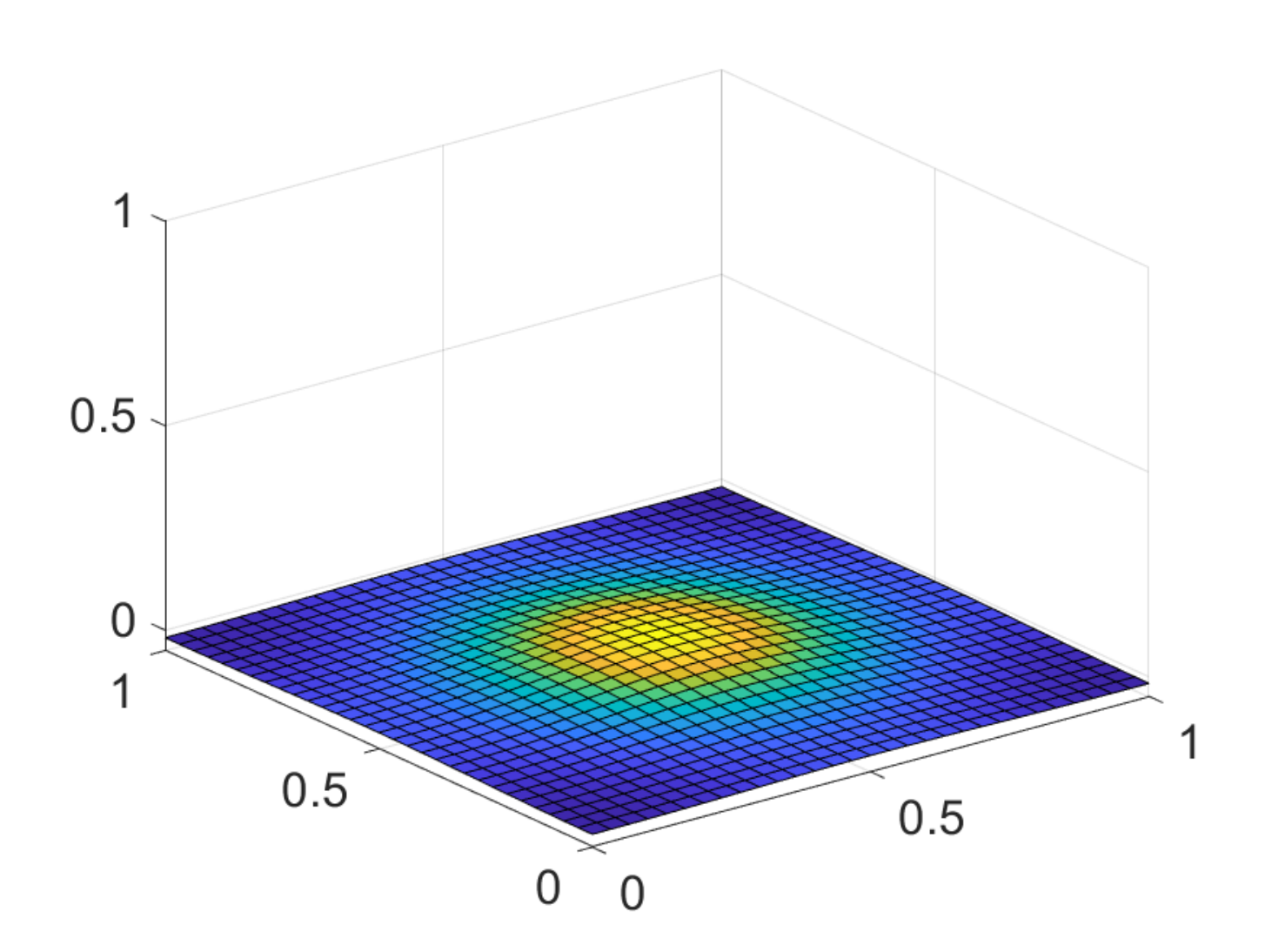}
		\caption{Test 3. Left: Initial condition $y(x,0)=\max \{- (2|x-0.5|+1)(2|y-0.5|+1)+2,0\}$. Middle: uncontrolled solution. Right: controlled solution.} 
\end{figure}

Figure \ref{initial_pyramidal} shows the uncontrolled trajectory, which converges to $y(x)=1$ and the controlled trajectory, which converges to the unstable equilibrium $y(x)=0$ as desired. Figure \ref{initial_pyramidal2} also presents the optimal control and the evaluation of the cost functional.

\begin{figure}[H]
	\label{initial_pyramidal2}
	\centering
	\includegraphics[scale = 0.3]{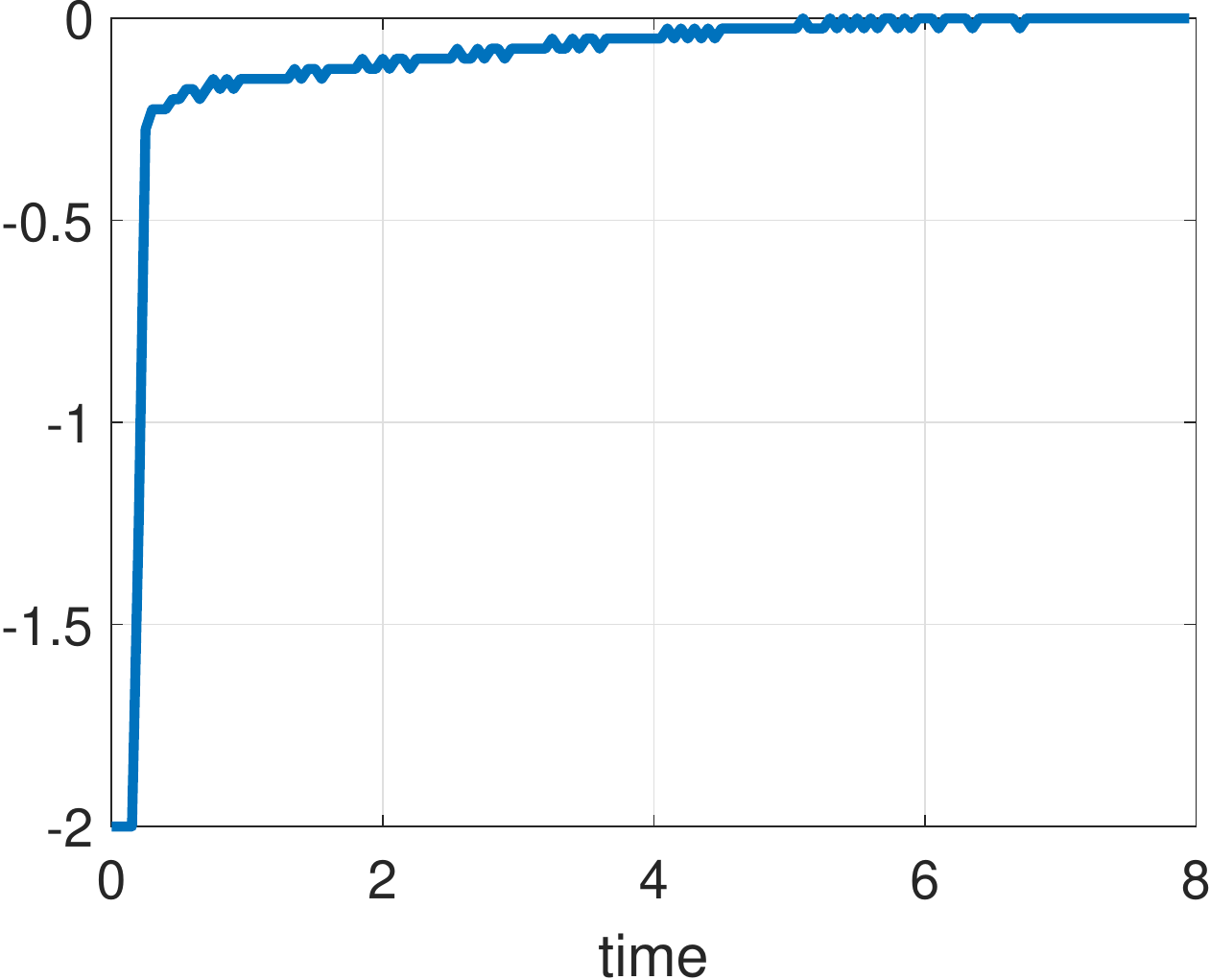}
	\includegraphics[scale = 0.3]{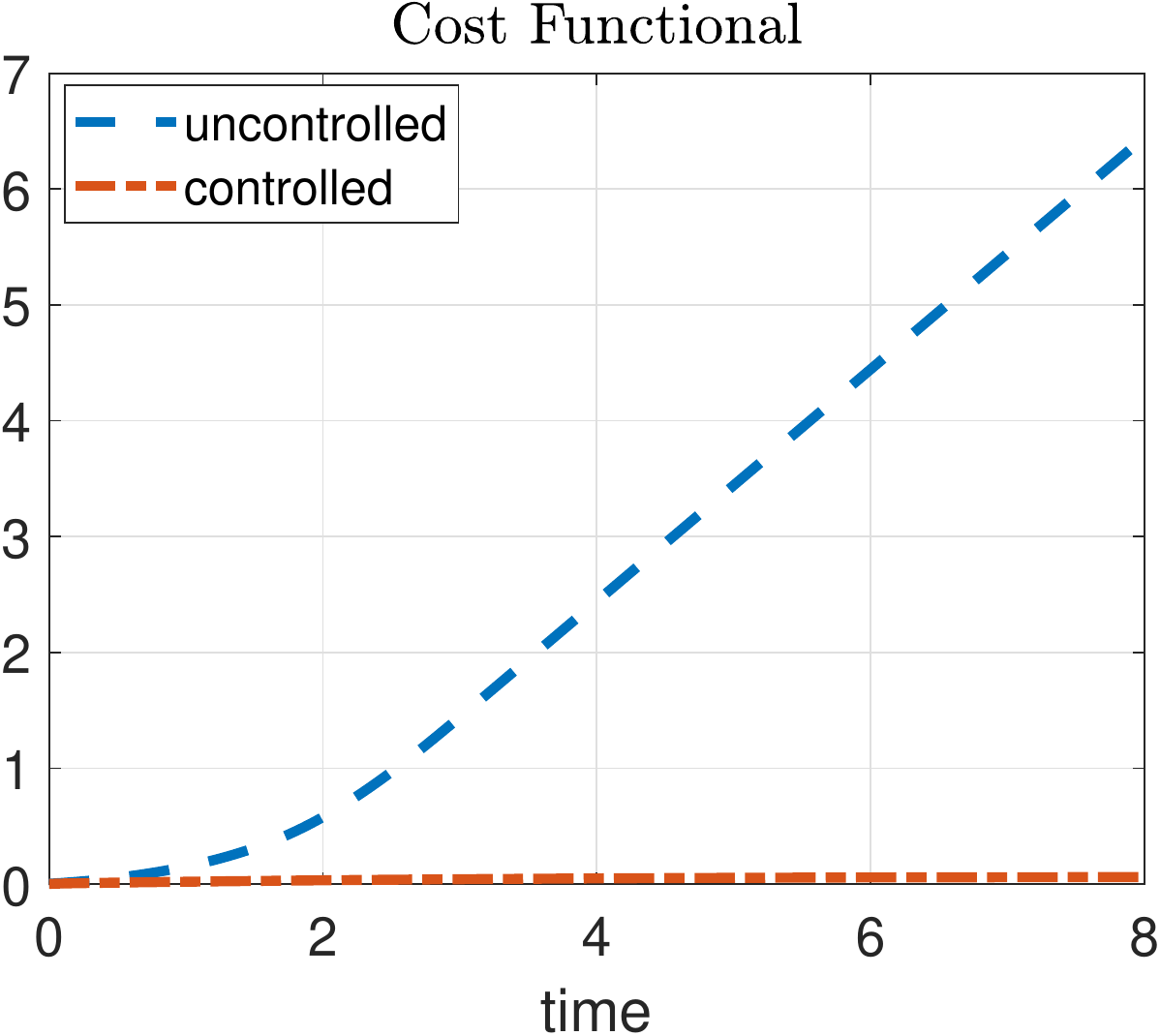}
		\caption{Test 3.  Left: optimal control and (right) running Cost with initial condition $y(x,0)=\max \{- (2|x-0.5|+1)(2|y-0.5|+1)+2,0\}$.}
\end{figure}

\section{Conclusions and future works} \label{Section6}
We have proposed a novel method to approximate stationary HJB equations using RBF and Shepard's approximation. The RBF usually presents a parameter which is here selected by means of a-posteriori criteria. We have also shown a new way to generate the grid which helps to deal with high dimensional problems localizing the problem along trajectories of interest. This method has the advantage to be able to reconstruct the feedback for (a class of) initial conditions without the need of update the mesh or recompute the HJB approximation as usually happens for the control of PDE with a DP approach. We have also provided an error estimate for the value function and proved, by numerical evidence, the effectiveness of our method.
The method could be extended to 
the semi-Lagrangian schemes where the interpolation is needed to reconstruct the characteristic of the problem. Furthermore, it will be our interest to couple this method with model order reduction to deal with more sophisticated PDE example, and possibly industrial applications.


\begin{thebibliography}{99}

\bibitem{AFK15}
A. Alla, M. Falcone, and D. Kalise. {\em An efficient policy iteration algorithm for dynamic programming equations},  SIAM J. Sci. Comput., \textbf{37}, 2015, 181--200. 

\bibitem{AFS19} 	A. Alla. M. Falcone. L. Saluzzi. {\em An efficient DP algorithm on a tree-structure for finite horizon optimal control problems} SIAM J. Sci. Comput., 2019.

\bibitem{AFV17}
A. Alla, M. Falcone, S. Volkwein, {\em Error analysis for POD approximations of infinite horizon problems via the dynamic programming approach}, SIAM J. Control Optim. {\bf 55}, 2017, 3091-3115. 

\bibitem{AS20}
A. Alla, L. Saluzzi, {\em A HJB-POD approach for the control of nonlinear PDEs on a tree structure}, Applied Numerical Mathematics, 2020, {\bf 155}, 192–207.

\bibitem{BCD97}
M. Bardi and I. Capuzzo-Dolcetta. {Optimal Control and Viscosity Solutions of Hamilton-Jacobi-Bellman Equations}. Birkh\"auser, Basel, 1997.

\bibitem{BGW15} P. Benner, S. Gugercin, K. Willcox, 
 \newblock {\em A Survey of Projection-Based Model Reduction Methods for Parametric Dynamical Systems,}
\newblock  SIAM Rev. {\bf 57}, 2015, 483-531.

\bibitem{BGGK13} O. Bokanowski, J. Garcke, M. Griebel, I. Klompmaker,
{\em An adaptive sparse grid semi-lagrangian scheme for first order hamilton-jacobi bellman equations.}
Journal of Scientific Computing, {\bf 55}, 2013, 575–605.


\bibitem{CFF04} E. Carlini, M. Falcone and R. Ferretti, {\em An efficient algorithm for Hamilton-Jacobi equations in high
dimension}, Comput. Vis. Sc., {\bf 7}, (2004),  15-29. 

\bibitem{Chen2014}
W. Chen, Z.-J. Fu, C.-S. Chen, 
{\em Recent advances in radial basis function collocation methods}
Springer Briefs in Applied Sciences and Technology (2014).

\bibitem{CC20}
C.M. Chilan, B.A. Conway, {\em Optimal nonlinear control using Hamilton-Jacobi-Bellman viscosity solutions on unstructured grids,} Journal of Guidance, Control, and Dynamics, {\bf 43}, 2020, 30-38. 

\bibitem{DO16a}
J. Darbon and  S. Osher {\em Splitting enables overcoming the curse of dimensionality},  Splitting methods in communication, imaging, science, and engineering, Sci. Comput., Springer, Cham, 2016, 427--432.

\bibitem{DO16b}
J.  Darbon and  S. Osher, {\em Algorithms for overcoming the curse of dimensionality for certain Hamilton-Jacobi equations arising in control theory and elsewhere},  Res. Math. Sci. {\bf 3}, 2016, 19--26.

\bibitem{DKK19}
S. Dolgov, D. Kalise, K. Kunisch. {\em Tensor decomposition for high-dimensional Hamilton-Jacobi-Bellman equations,} submitted, 2019. {\tt https://arxiv.org/pdf/1908.01533.pdf}

\bibitem{FF13}
M. Falcone and R. Ferretti. {Semi-Lagrangian Approximation Schemes for Linear and Hamilton-Jacobi equations},  SIAM, 2013.  

\bibitem{F07}
G. F. Fasshauer, Meshfree Approximation Methods with MATLAB, 2007.

\bibitem{F15}
G. F. Fasshauer and M. McCourt, Kernel-Based Approximation Methods Using MATLAB, 2015.

\bibitem{FFJS17} 
G. Ferretti, R. Ferretti, O. Junge, A. Schreiber, {\em An adaptive multilevel radial basis function scheme for the HJB equation,} IFAC-PapersOnLine {\bf 50}, 2017, 1643-1648

\bibitem{Fornberg2015} 
B. Fornberg and N. Flyer, 
{\em Solving {PDE}s with radial basis functions},
Acta Numer. {\bf 24}, 2015, 215-258


\bibitem{GK17} J. Garcke, A. Kr\"oner. {\em Suboptimal feedback control of PDEs by solving HJB equations on adaptive sparse grids}, Journal of Scientific Computing, {\bf 70}, 2017, 1-28.


\bibitem{JS15}
O. Junge, A. Schreiber. {\em Dynamic programming using radial basis functions,} Discrete and Continuous Dynamical Systems- Series A, {\bf 35} 2015, 4439-4453.

\bibitem{G97} L. Gr{\"u}ne {\em An adaptive grid scheme for the discrete Hamilton-Jacobi-Bellman equation, }Numerische Mathematik, 1997.

\bibitem{KK18}
D. Kalise, K. Kunisch, {\em Polynomial approximation of high-dimensional Hamilton-Jacobi-Bellman equations and applications to feedback control of semilinear parabolic PDEs}, SIAM J. Sci. Comput. {\bf 40}, 2018, A629-A652.

\bibitem{KVX04}
K. Kunisch, S. Volkwein, L. Xie. {\em HJB-POD based feedback design for the optimal control of evolution problems}, SIAM J. on Applied Dynamical Systems, \textbf{4}, 2004, 701-722.

\bibitem{L07}
R.J. Leveque. {\em Finite Difference Methods for Ordinary and Partial Differential Equations: Steady-State and Time-Dependent Problems}, SIAM book, 2007.


 \bibitem{ME09}
W.M. McEneaney, {\em Convergence rate for a curse-of-dimensionality-free method for Hamilton-Jacobi-Bellman PDEs represented as maxima of quadratic forms}. SIAM J. Control Optim. {\bf 48}, 2009, 2651--2685. 

\bibitem{ME07}
W.M. McEneaney, {\em A curse-of-dimensionality-free numerical method for solution of certain HJB PDEs},  SIAM J. Control Optim. {\bf 46}, 2007, 1239--1276. 

\bibitem{SH18}
A. Schmidt and B. Haasdonk, {\em Data-driven surrogates of value functions and applications to feedback control for dynamical systems}, IFAC-PapersOnLine,
{\bf 51}, 2018, 307--312.

\bibitem{W95}
H. Wendland, {\em Piecewise polynomial, positive definite and compactly supported radial functions of minimal degree},  Adv. Comput. Math. {\bf 4}, 
1995, 389--396. 

\bibitem{W04}
H. Wendland, Scattered Data Approximation, 2004.





\end{thebibliography}
\end{document}